\newcommand{\de}{\delta}
\newcommand{\ep}{\varepsilon}
\newcommand{\ee}{{\bf e}}
\newcommand\tc[2]{\theta\left[\begin{matrix}#1\\ #2\end{matrix}\right]}
\newcommand\tT[2]{T_a^{(g)}\left[\begin{matrix}#1\\ #2\end{matrix}\right]}
\newcommand{\CC}{{\mathbb{C}}}
\newcommand{\EE}{{\mathbb{E}}}
\newcommand{\QQ}{{\mathbb{Q}}}
\newcommand{\RR}{{\mathbb{R}}}
\newcommand{\ZZ}{{\mathbb{Z}}}
\newcommand{\calH}{{\mathcal H}}
\newcommand{\calA}{{\mathcal A}}
\newcommand{\calM}{{\mathcal M}}
\newcommand{\calJ}{{\mathcal J}}
\newcommand{\calP}{{\mathcal P}}
\newcommand{\calX}{{\mathcal X}}
\newcommand{\op}{\operatorname}
\newcommand{\Sat}{{\calA_g^{\op {Sat}}}}
\newcommand{\Vor}{{\calA_g^{\op {Vor}}}}
\newcommand{\Perf}{{\calA_g^{\op {Perf}}}}
\newcommand{\Igu}{{\calA_g^{\op {Igu}}}}
\newcommand{\Sp}{\op{Sp}}
\newcommand{\PSp}{\op{PSp}}
\newcommand{\GL}{\op{GL}}
\newcommand{\Sym}{\op{Sym}}
\newcommand{\Pic}{\op{Pic}}
\newcommand{\Jac}{\op{Jac}}
\newcommand{\even}{{\op{even}}}
\newcommand{\odd}{{\op{odd}}}
\newcommand{\Mat}{{\op{Mat}}}
\newcommand{\dec}{{\op{dec}}}
\newcommand{\ind}{{\op{ind}}}
\newcommand{\mult}{{\op{mult}}}
\newcommand{\Hom}{{\op{Hom}}}
\newcommand\codim{\operatorname{codim}}
\newcommand\Sing{\operatorname{Sing}}
\newcommand\tn{\theta_{\rm null}}
\newcommand\Hyp{\op{ Hyp}}
\newcommand\T{\Theta}
\theoremstyle{plain}
\newtheorem{thm}{Theorem}[section]
\newtheorem{prop}[thm]{Proposition}
\newtheorem{qu}[thm]{Question}
\newtheorem{conj}[thm]{Conjecture}
\theoremstyle{definition}
\newtheorem{df}[thm]{Definition}
\newtheorem{rem}[thm]{Remark}
\newtheorem{ex}[thm]{Example}
\begin{document}
\title{Geometry of theta divisors --- a survey }
\author{Samuel Grushevsky}
\address{Mathematics Department, Stony Brook University,
Stony Brook, NY 11790-3651, USA}
\email{sam@math.sunysb.edu}
\thanks{Research of the first author is supported in part by National Science Foundation under grant DMS-10-53313.}
\author{Klaus Hulek}
\address{Institut f\"ur Algebraische Geometrie, Leibniz Universit\"at Hannover, Welfengarten 1, 30060 Hannover, Germany}
\email{hulek@math.uni-hannover.de}
\thanks{Research of the second author is supported in part by DFG grant Hu-337/6-2. He is a member of
the Research Training Group ``Analysis, Geometry and String Theory'' and of the Riemann Center at
Leibniz Universit\"at Hannover}
\subjclass{14K10,14K25}

\begin{abstract}
We survey the geometry of the theta divisor and discuss various loci of principally polarized abelian varieties (ppav) defined by imposing
conditions on its singularities. The loci defined in this way include the (generalized) Andreotti-Mayer loci,
but also other  geometrically interesting cycles such as the locus of intermediate Jacobians of cubic threefolds.
We shall
discuss questions concerning the dimension of these cycles as well as the computation of
their class in the Chow or the
cohomology ring. In addition we consider the class of their closure in suitable toroidal
compactifications and describe degeneration techniques which have proven useful. For this we include a
discussion of the construction of the universal family of ppav with a level structure and its possible extensions to toroidal
compactifications.
The paper contains numerous open
questions and conjectures.
\end{abstract}
\maketitle
\section*{Introduction}
Abelian varieties are important objects in algebraic geometry. By the Torelli theorem, the Jacobian of a curve and its
theta divisor encode all properties of the curve itself. It is thus a natural idea to study curves through their
Jacobians. At the same time, one is led to the question of determining which (principally polarized) abelian varieties
are in fact Jacobians, a problem which became known as the Schottky problem.  Andreotti and Mayer initiated an approach to the Schottky problem by attempting
to characterize Jacobians via the properties of the singular locus of the theta divisor. This in turn led to the introduction
of the Andreotti-Mayer loci $N_k$ parameterizing principally polarized abelian varieties whose singular locus has
dimension at least $k$.

In this survey paper we shall systematically discuss loci within the moduli space $\calA_g$ of principally polarized abelian  varieties (ppav), or within the universal family of ppav $\calX_g$,
defined by imposing various conditions on the singularities of the
theta divisor. Typically these loci are defined by conditions on the dimension of
the singular locus or the multiplicity of the singularities or both. A variation is to ask for loci where the
theta divisor has singularities of a given multiplicity at a special point, such as a $2$-torsion point.
We shall discuss the geometric relevance of such loci, their dimension, and their classes in the
Chow ring of either $\calA_g$ or a suitable compactification.
In some cases we shall also discuss the restriction of
these loci to the moduli space ${\mathcal M}_g$ of curves of genus $g$. Needless to say, we will encounter numerous
open problems, as well as some conjectures.

\smallskip
An approach that was successfully applied to the study of these loci in a number of cases is to study these loci by degeneration, i.e.~to investigate the intersection of the closure of such a locus with the boundary of a suitable compactification. This requires a good understanding
of the universal family $\calX_g\to\calA_g$, and its possible extension to the boundary, as well as understanding the same for suitable finite Galois covers, called level covers.
This is a technically very demanding problem.
In this survey we will discuss the construction of the universal family in some detail and
we will survey existing results in the literature concerning the extension of the universal family to toroidal
compactifications, in particular the second Voronoi compactification. We will also explain how
this can be used to compute the classes of the loci discussed above, in the example of intermediate Jacobians of cubic threefolds.

\section{Setting the scene}\label{sec:notation}
We start by defining the principal object of this paper:
\begin{df}
A complex {\em principally polarized abelian variety (ppav)} $(A,\T)$ is a smooth complex projective
variety $A$ with a distinct point (origin) that is an abelian algebraic group, together with the first Chern class $\Theta:=c_1(L)$ of an ample bundle $L$ on $A$ which has a unique (up to scalar) section , i.e. $\dim H^0(A,L)=1$.
\end{df}

We denote $\calA_g$ the moduli stack of ppav up to biholomorphisms preserving polarization. This is a fine moduli stack, and we denote $\pi:\calX_g\to\calA_g$ the universal family of ppav, with the fiber of $\pi$ over some $(A,\T)$ being the corresponding ppav. Note that since any ppav has the involution $-1:z\mapsto -z$, the generic point of $\calA_g$ is stacky, and for the universal family to exist, we have to work with stacks. In fact the automorphism group of a generic ppav is exactly $\ZZ/2\ZZ$, and thus geometrically as a variety the fiber of the universal family over a generic $[A]\in\calA_g$ is $A/\pm 1$, which is called the Kummer variety.

We would now like to define a universal polarization divisor $\T_g\subset\calX_g$: for this we need to prescribe it (as a line bundle, not just as a Chern class) on each fiber, and to prescribe it on the base, and both issues are complicated. Indeed, notice that translating a line bundle $L$ on an abelian variety $A$ by a point $a\in A$ gives a line bundle $t_a^*L$ on $A$ of the same Chern class as $L$. In fact for a ppav $(A,\T)$ the dual abelian variety $\hat A:=\Pic^0(A)$ is isomorphic to $A$ --- the isomorphism is given by the morphism $A \to \hat A$,
$a \mapsto t_a^*L \otimes L^{-1}$.
Thus the Chern class $\T$ determines the line bundle $L$ uniquely up to translation. It is thus customary to choose a ``symmetric'' polarization on a ppav, i.e.~to require $(-1)^*L=L$. However, for a given $\Theta=c_1(L)$ this still does not determine $L$ uniquely: $L$ can be further translated by any $2$-torsion point on $A$ (points of order two on the group $A$).
We denote the group of $2$-torsion points on $A$ by $A[2]$.

Thus the set of symmetric polarizations on $A$ forms a torsor over $A[2]$ (i.e.~is an affine space over $\ZZ/2\ZZ$ of dimension $2g$), and there is in fact no way to universally choose one of them globally over $\calA_g$.

Moreover, to define the universal polarization divisor, we would also need to prescribe it
along the zero section in order to avoid ambiguities coming from the pullback of a line bundle on the base.
The most natural choice would be to require the restriction of the universal polarization to the
zero section to be trivial, but this is not always convenient, as one would rather have an ample bundle $\T_g\subset\calX_g$ (rather than
just nef globally, and ample on each fiber of the universal family).

We shall now describe the {\em analytic} approach to defining the universal theta divisor on $\calA_g$, and hence
restrict solely to working over the complex numbers. We would, however, like to point out that this is an
unnecessary restriction and that there are well developed approaches in any characteristic --
the very first example being the Tate curve \cite{tate} in $g=1$. In fact
$\calA_g$ and toroidal compactifications can be defined over $\ZZ$.
We refer the reader to \cite{fachbook}, \cite{alexeev} and
\cite{olsson}. The basis of these constructions is Mumford's seminal
paper \cite{mumfordcomp} and the ana\-ly\-sis of suitable degeneration data,
which take over the role of (limits of) theta functions in characteristic $0$.
We will later comment on the relationship between moduli varieties in the analytic, respectively the algebraic
category and stacks.

\smallskip
Recall that the {\em Siegel upper half-space} of genus $g$ is defined by
$$
\calH_g := \{\tau \in \Mat(g \times g,\CC)| \, \tau=\tau^{t}, \operatorname{Im} (\tau)>0 \}
$$
(where the second condition means that the imaginary part of $\tau$ is positive definite).
To each point $\tau \in \calH_g$ we can associate the lattice $\Lambda_{\tau}\subset \CC^{g}$ (that is, a discrete
abelian subgroup) spanned by the columns of the
$g \times 2g$ matrix $\Omega_{\tau}= (\tau, {\bf 1}_g)$.
The torus $A_{\tau}=\CC^g/\Lambda_{\tau}$ carries
a principal polarization by the following construction: the standard symplectic form defines
an integral pairing $J: \Lambda_{\tau} \otimes \Lambda_{\tau} \to \ZZ$, whose $\RR$-linear extension to
$\CC^g$ satisfies $J(x,y)=J(ix,iy)$.
The form
$$
H(x,y)= J(ix,y) + iJ(x,y)
$$
then defines a positive definite hermitian form $H > 0$ on $\CC^g$ whose imaginary part is the $\RR$-linear extension of $J$.
Indeed, using the defining properties $\tau=\tau^{t}$ and $\operatorname{Im} (\tau)>0$ of Siegel space,
the fact that $H$ is hermitian and positive definite translates into the {\em Riemann relations}
$$
\Omega_{\tau}^tJ^{-1}\Omega_{\tau}=0, \quad
i\Omega_{\tau}^tJ^{-1}\overline{\Omega}_{\tau} >0.
$$
By the Lefschetz theorem $H \in H^2(A_{\tau},\ZZ) \cap H^{1,1}(A_{\tau},\CC)$ is the first Chern class
of a line bundle $L$ and the fact that $H > 0$ is positive definite translates into $L$ being ample.
We refer the reader to \cite{bila} for more detailed discussions.

There are many choices of lattices which define isomorphic ppav. To deal with this problem,
one considers the symplectic group
$\Sp(g,\ZZ)$ consisting of all integer matrices which preserve the standard symplectic form. This group acts
on the Siegel space by the operation
$$
\gamma= \begin{pmatrix} A&B\\ C&D\end{pmatrix}: \tau \mapsto (A\tau + B)(C \tau +D)^{-1}
$$
where $A,B,C$ and $D$ are $g \times g$ blocks.
It is easy to see that $A_{\tau} \cong A_{\gamma\circ\tau} $, with the isomorphism given by multiplication on
the right by $(C\tau + D)^{-1}$.
The points of the quotient $\Sp(g,\ZZ) \backslash \calH_g$ are in $1$-to-$1$ correspondence
with isomorphism classes of ppav of dimension $g$.
This quotient is an analytic variety with finite quotient singularities, and
by a well-known result of Satake \cite{satake} also a quasi-projective variety. Indeed, it is the
coarse moduli space associated to the moduli stack of ppav. By misuse of notation we shall also
denote it by $\calA_g$, and specify which we mean whenever it is not clear from the context. By an abuse of notation we will sometimes write $\tau\in\calA_g$, choosing some point $\tau\in\calH_g$ mapping to its class $[\tau]\in\calA_g$ (otherwise the notation $[\tau]$ is too complicated).

To define principal polarizations explicitly analytically, we consider the Riemann theta function
$$
 \theta(\tau,z):=\sum\limits_{n\in\ZZ^g}\ee (n^t\tau n/2 + n^tz),
$$
where we denote $\ee(x):=\exp(2\pi i x)$ the exponential function. This series converges
absolutely and uniformly on compact sets in $\calH_g$. For fixed $\tau$ the theta function transforms as follows with respect to the lattice $\Lambda_{\tau}$: for $m,n\in \ZZ^g$ we have
\begin{equation}\label{thetatrans1}
\theta(\tau,z+n+\tau m)=\ee(-\frac12 m\tau{}^tm -m{}^tz)\theta(\tau,z).
\end{equation}
Moreover, the theta function is even in $z$:
\begin{equation}
\theta(\tau,-z)=\theta(\tau,z).
\end{equation}
For fixed $\tau \in \calH_g$ the zero locus $\lbrace\theta(\tau,z)=0\rbrace$
is a divisor in $A_{\tau}$, and the associated line bundle $L$
defines a principal polarization  on $A_{\tau}$, i.e.~the first Chern class
$c_1(L) \in H^2(A_{\tau},\ZZ)= \Hom (\Lambda^2 \Lambda_{\tau},\ZZ)$ equals the Riemann bilinear form
discussed above.
Since the theta function is even, the line bundle $L$ is symmetric, i.e.~$(-1)^*(L)\cong L$.

The situation becomes more difficult when one works in families.
For this, we consider the group
$\Sp(g,\ZZ)\ltimes \ZZ^{2g}$ where the semi-direct product is given by the natural action of
$\Sp(g,\ZZ)$ on vectors of length $2g$.
This group acts on $\calH_g \times \CC^g$ by
$$
(\gamma, (m,n)): (\tau,z) \mapsto (\gamma\circ\tau, (z +\tau m + n )(C\tau + D)^{-1})
$$
where $n,m\in \ZZ^g$.
Note that for $\gamma={\bf 1}$ this formula simply describes the action of the lattice $\Lambda_{\tau}$ on $\CC^g$.
We would like to say that the quotient $\calX_g=\Sp(g,\ZZ)\ltimes \ZZ^{2g} \backslash \calH_g \times \CC^g$
is the universal abelian variety over $\calA_g$. This is true in the sense of stacks, but not for coarse
moduli spaces: note in particular that $(- {\bf 1},(0,0))$ acts on each fiber by the involution $z \mapsto -z$.
Further complications arise from points in $\calH_g$ with larger stabilizer groups.

We would like to use the Riemann theta function to define a polarization on this ``universal family''.
However, this is not trivial. The transformation formula of $\theta(\tau,z)$ with respect to the
group $\Sp(g,\ZZ)$ is difficult and requires the introduction of generalized theta functions which we will discuss
below,  for details see \cite[pp.~84,85]{igusabook}.
It is, however, still true that the Chern class of the line bundle associated to the divisor
$\lbrace \theta(\tau,z)=0\rbrace\subset A_\tau$ is the
same as the Chern class of
$\lbrace \theta(\gamma\circ\tau,\gamma\circ z)=0 \rbrace\subset A_{\gamma\circ\tau}\cong A_\tau$,
although the line bundles are
in general not isomorphic.
The point is that a symmetric line bundle
representing a polarization is only defined up to translation by a $2$-torsion point, and
there is no way of making such a choice globally over ${\mathcal A}_g$.

This problem leads to considering generalizations of the classical Riemann theta function.
For each $\ep,\de \in \RR^{2g}$ we define the function
$$
 \tc\ep\de(\tau,z):=
\sum\limits_{n\in\ZZ^g}\ee ((n+\ep)^t\tau (n+\ep)/2
 + (n+\ep)^t(z+\de)).
$$
Note that for $\ep=\de=0$ this is just the function $\theta(\tau,z)$ defined before.
The pair $m=(\ep,\de)$ is called the characteristic of the theta function.
For the transformation behavior of these functions with respect to translation by $\Lambda_\tau$
we refer the reader to \cite[pp.~48,49]{igusabook}.

This function defines a section of a line bundle $L_{\ep,\de}$ which differs from the line bundle $L=L_{0,0}$
defined by the
standard theta function by translation by the point $\ep\tau+\de \in A_{\tau}$. In particular the two
line bundles have the same first Chern class and represent the same principal polarization.
Of special interest to us
is the case where $\ep, \de$ are half-integers, of which we think then as $\ep,\de\in(\frac12\ZZ/\ZZ)^g$, in which case
the line bundle $L_{\ep,\de}$ is symmetric.
We then call the characteristic $(\ep,\de)$ {\em even}
or {\em odd} depending on whether $4\ep \cdot \de$ is $0$ or $1$ modulo $2$.
The function $\tc\ep\de(\tau,z)$ is even or odd depending on whether the
characteristic is even or odd. The value at $z=0$ of a theta function with characteristic is called {\em theta constant} with
characteristic --- thus in particular all theta constants with odd characteristic vanish
identically.
The action of the symplectic group
on the functions $\tc\ep\de(\tau,z)$ is given by the theta transformation formula, see \cite[Theorem 5.6]{igusabook}
or \cite[Formula 8.6.1]{bila}, which in particular permutes the characteristics.
This too shows that there is no way
of choosing a symmetric line bundle universally over
$\calA_g$.

In order to circumnavigate this problem we shall now consider {\em (full) level structures}, which lead to
Galois covers of $\calA_g$.
Level structures are a useful tool if one wants to work with varieties rather than with stacks.
The advantage is twofold. Firstly, one can thus avoid problems which arise from torsion elements, or more generally,
non-neatness of the symplectic group $\Sp(g,\ZZ)$. Secondly, going to suitable level covers allows one to
view theta functions and, later on, their gradients, as sections of bundles. One can thus
perform certain calculations on level covers of $\calA_g$ and, using the Galois group, then interpret
them on $\calA_g$ itself.
We shall next define the full level $\ell$ covers of $\calA_g$ and discuss the construction of universal
families over these covers.
\begin{df}
The full level $\ell$ subgroup of $\Sp(g,\ZZ)$ is defined by
$$
  \Gamma_g(\ell) := \{\gamma \in \Sp(g,\ZZ) | \, \gamma \equiv {\bf 1}_{2g} \mod \ell \}.
$$
\end{df}
Note that this is a normal subgroup since it is the kernel of the projection
$\Sp(g,\ZZ) \to \Sp(g,\ZZ/\ell \ZZ)$. We call the quotient
$$
 \calA_g(\ell):=\Gamma_g(\ell) \backslash \calH_g
$$
the level $\ell$ cover.
There is a natural map $\calA_g(\ell) \to \calA_g$ of varieties which is Galois with Galois group $\PSp(g,\ZZ/\ell\ZZ)$.

One wonders whether the theta function transforms well under the action of $\Sp(2g,\ZZ)$ on $\calH_g$, i.e. if there is a transformation formula similar to (\ref{thetatrans1}) relating its values at $\tau$ and $\gamma\circ\tau$. To put this in a proper framework, we define
\begin{df}
A holomorphic function $F:\calH_g\to\CC$ is called a (Siegel) modular form of weight $k$ with respect to a finite index subgroup $\Gamma\subset\Sp(2g,\ZZ)$ if
$$
 F(\gamma\circ\tau)=\det(C\tau+D)^k F(\tau)\qquad \forall \gamma\in\Gamma,\forall\tau\in\calH_g
$$
(and for $g=1$, we also have to require suitable regularity near the boundary of $\calH_g$).
\end{df}
It then turns out that theta constants with characteristics are modular forms of weight one half with respect to $\Gamma_g(4,8)$, the finite index normal {\em theta level subgroup}, such that $\Gamma_g(8)\subset\Gamma_g(4,8)\subset\Gamma_g(4)$, and in fact the eighth powers of theta constants are modular forms of weight 4 with respect to all of $\Gamma_g(2)$, see \cite{igusabook} for a proper development of the theory.

The geometric meaning of the variety $\calA_g(\ell)$ is the following: it parameterizes ppav
with a level $\ell$ structure. A level $\ell$ structure is an ordered
symplectic basis of the group $A[\ell]$ of $\ell$-torsion
points of $A$, where the symplectic form comes form the Weil pairing on $A[\ell]$.
Thus a level $\ell$ structure is equivalent to a choice of a
symplectic isomorphism $A[\ell] \cong (\ZZ/\ell\ZZ)^{2g}$
where the right-hand side carries the standard symplectic form.
We shall also come back to level structures in connection with Heisenberg groups in
Section \ref{sec:compactifications}.

Indeed, if $\ell \geq 3$, then  $\calA_g(\ell)$ is a fine moduli space. To see this we define the groups
$$
 G_g(\ell):=\Gamma_g(\ell) \ltimes (l\ZZ)^{2g} \lhd \Sp(g,\ZZ) \ltimes \ZZ^{2g}.
$$
Note that
$\Sp(g,\ZZ) \ltimes \ZZ^{2g}/G_g(\ell)  \cong \Sp(g,\ZZ/\ell\ZZ) \ltimes (\ZZ/\ell\ZZ)^{2g}$.
\begin{df}
We define the universal family
$$
  \calX_g(\ell):= G_g(\ell) \backslash \calH_g \times \CC^g.
$$
\end{df}
This definition makes sense for all $\ell$. Note that, as a variety, $\calX_g(1)$ is the
universal Kummer family.
For $\ell \geq 3$ the group $G_g(\ell)$ acts freely, and $\calX_g(\ell)$ is an honest
family of abelian varieties over $\calA_g(\ell)$. We note that for $\tau \in \calA_g(\ell)$ the fiber
$\calX_g(\ell)_{\tau} = \CC^g/ \ell\Lambda_{\tau}\cong A_{\tau}=\CC^g/ \Lambda_{\tau}$ as ppav.

The family $\calX_g(\ell)$ is indeed a universal family for the moduli problem:
the points given by $\Lambda_{\tau}$ in each fiber define disjoint sections
of $\calX_g(\ell)$ and the sections given by $\tau m+ n$ with $m,n\in \{0,1\}^g$ give, properly
ordered, a symplectic basis of the $\ell$-torsion points and thus a
level $\ell$ structure in each fiber.
The group $\Sp(g,\ZZ) \ltimes \ZZ^{2g}/G_g(\ell)  \cong \Sp(g,\ZZ/\ell\ZZ) \ltimes (\ZZ/\ell\ZZ)^{2g}$ acts on
$\calX_g(\ell)$ with quotient $\calX_g$. Under this map each fiber $\calX_g(\ell)_{\tau}$ maps $(2\cdot(\ell)^{2g})$-to-$1$
to the fiber $(\calX_g)_{\tau}$, the map being given by multiplication by $\ell$ followed by the Kummer involution.

The next step is to define a universal theta divisor. Provided that the level $\ell$ is divisible by $8$,
the theta transformation formula \cite[Theorem 5.6]{igusabook} shows that the
locus $\lbrace \theta(\tau,z)=0\rbrace$ defines a
divisor $\T_g$ on the universal family $\calX_g(\ell)$, which is $\ell^2$ times a principal
divisor on each fiber. We would like to point out that the condition $8 | \ell$ is sufficient to obtain
a universal theta divisor, but that we could also have worked with smaller groups, namely the theta groups
$\Gamma_g(4\ell,8\ell)$. For a definition we refer the reader to \cite[Section V]{igusabook}.
We would also like to
point out that the group
$\Sp(g,\ZZ) \ltimes \ZZ^{2g}/G_g(\ell)$
acts on the
theta divisor $\T_g$ on $\calX_g(\ell)$, but does not leave it invariant, in particular the theta divisor
does not descend to $\calX_g$ in the category of {\em analytic varieties}.

The previous discussion took place fully in the analytic category but the resulting analytic varieties are in
fact quasi-projective varieties (see Section \ref{sec:compactifications}). In fact the spaces
$\calA_g$  and $\calA_g(\ell)$ are coarse, and if $\ell \geq 3$, even fine, moduli spaces representing
the functors of ppav and ppav with a level $\ell$-structure respectively. We can thus also think of
$\calA_g(\ell) \to \calA_g$ as a quotient of stacks, which we want to do from now on.

Unlike in the case of varieties the universal {\em stack} family $\calX_g$ over the {\em stack} $\calA_g$
carries a universal theta divisor.
In our notation we will not
usually distinguish between the stack and the associated coarse moduli space, but will try to make it clear
which picture we use.

At this point it is worth pointing out the connection between level structures and the Heisenberg group, resp.~the theta group. To do this we go back to thinking of the fibers
$\calX_g(\ell)_{\tau}\cong A_{\tau}$
of the universal family $\calX_g(\ell)$ as ppav.
As such they also carry $\ell$ times a principal polarization and we choose a line bundle ${L}_{\tau}$
representing this multiple of the principal polarization.
Then ${L}_{\tau}$ is invariant under pullback by  translation by $\ell$-torsion points,
in other words $A_{\tau}[\ell] = \ker (\lambda_{{L}_{\tau}}: A_{\tau} \to \Pic^0(A_{\tau}))$.
Using the level $\ell$ structure on $A_{\tau}$, we can identify $ A_{\tau}[\ell]\cong(\ZZ/\ell \ZZ)^{2g}$.
Translations by this group leave the line bundle ${L}_{\tau}$ invariant, but the action of this group on
$A_{\tau}$ does not lift to the total space of ${L}_{\tau}$. In order to do this, we must extend
the group $(\ZZ/\ell \ZZ)^{2g}$ to the Heisenberg group $H_{\ell}$ of level $\ell$. This is a central extension
$$
1 \to \mu_{\ell} \to H_{\ell} \to (\ZZ/\ell\ZZ)^{2g} \to 0.
$$
Here the commutators of $H_{\ell}$ act as homotheties by roots of unity of order $\ell$.
If we consider the induced action
of the Heisenberg group on the line bundle ${L}_{\tau}^{\otimes \ell}$, which represents $\ell^2$ times
the principal polarization, then the commutators act trivially and thus this line bundle descends to
$\Pic^0(A_{\tau})\cong A_{\tau}$ where it represents a principal polarization. This is exactly the situation described above. If we choose ${L}_{\tau}$ to be symmetric, then we can further extend the Heisenberg group
by the involution $\iota$. Instead of working with a central extension by $\mu_{\ell}$,
one can also consider the central extension by $\CC^*$. In this way
one obtains the (symmetric) theta group. For details we refer the reader to \cite[Section 6]{bila}.

The smallest group for which we can interpret theta functions  as honest sections of line bundles
is the group $\Gamma_g(4,8)$. We shall however see later that many computations can in fact be done
on the level $2$ cover $\calA_g(2)$ --- see the discussion in Section \ref{sec:singularities}.

The reason that we have taken so much trouble over the definition of the universal family is that
it is essential for much of what we will discuss in this paper. It will also be important for us to extend
the universal family to (partial) compactifications of $\calA_g$. This is indeed a very subtle problem
to which we will come back in some detail in Section \ref{sec:compactifications}. This will be crucial
when we discuss degeneration techniques in Section \ref{sec:degen}.


\begin{ex}
For a smooth algebraic genus $g$ curve $C$ we denote $\Jac(C)$ its Jacobian. The Jacobian can be defined analytically (by choosing a basis for cycles, associated basis for holomorphic differentials, and constructing a lattice in $\CC^g$ by integrating one over the other) or algebraically, as $\Pic^{g-1}(C)$ or as $\Pic^0(C)$. We note that $\Pic^0(C)$ is naturally an abelian variety, since adding degree zero divisors on a curve gives a degree zero divisor; however there is no natural choice of an ample divisor on $\Pic^0(C)$ (the polarization class is of course well-defined). On the other hand, $\Pic^{g-1}(C)$ has a natural polarization --- the locus of effective divisors of degree $g-1$ --- but no natural choice of a group law. Thus to get both polarization and the group structure one needs to identify $\Pic^0(C)$ with $\Pic^{g-1}(C)$, by choosing one point $R\in \Pic^{g-1}(C)$ as the origin --- a natural choice for $R$ is the Riemann constant. Alternatively, one could view $\Pic^{g-1}(C)$ as a torsor over the group $\Pic^0(C)$.

We would like to recall that the observation made above is indeed the starting point of Alexeev's work \cite{alexeev}.
Instead of looking at the usual functor of ppav, Alexeev considered the equivalent functor of pairs $(P,\Theta)$
where $P$ is an abelian torsor acted on by the abelian variety $A=\Pic^0(P)$, and $\Theta$ is a divisor with
$h^0(P,\Theta)=1$. It is the latter functor that Alexeev compactifies, obtaining his moduli space of stable
semiabelic varieties, which we will further discuss later on.
\end{ex}

For future reference we recall the  notion of decomposable and indecomposable ppav.
We call a principally polarized abelian variety $(A,\T)$ {\em decomposable} if it is a product
$(A,\T)\cong (A_1,\T_1)\times (A_2,\T_2)$ of ppav of smaller dimension, otherwise we call it {\em indecomposable}.
We also note that decomposable ppav cannot be Jacobians of smooth projective curves. However, if we consider a nodal
curve $C=C_1 \cup C_2$ with $C_i, i=1,2$ two smooth curves intersecting in one point, then
$\Jac(C)\cong \Jac(C_1)\times \Jac(C_2)$ is a decomposable ppav.

The locus $\calA_g^{\dec}$ of decomposable ppav is a closed subvariety of $\calA_g$, it is the union of the
images of the product maps $\calA_i \times \calA_{g-i}$ in $\calA_g$. Its complement
$\calA_g^{\ind}=\calA_g \setminus \calA_g^{\dec}$ is open.

\section{Singularity loci of the theta divisor}\label{sec:singularities}
We are interested in loci of ppav whose theta divisor satisfies certain geometric
conditions, in particular we are interested in the loci of ppav with prescribed behavior of the
singular locus of the theta divisor. Working over the Siegel upper half-space, we define
for a point $\tau \in \calH_g$ the set
$$
T_a^{(g)}(\tau):= \{ z \in A_{\tau} \mid \mult_z\theta(\tau,z) \geq a \}
$$
or more generally
$$
\tT\ep\de  (\tau):= \{ z \in A_{\tau} \mid \mult_z\tc\ep\de(\tau,z) \geq a \}.
$$
This means that we consider the singularities of the theta divisor {\em in the $z$ direction}.
If we replace $\tau$ by a point $\gamma\circ\tau$, $\gamma \in \Sp(g,\ZZ)$, which corresponds to the same ppav in $\calA_g$, then the locus $\tT\ep\de  (\gamma\circ\tau)$ is obtained from $\tT{\ep'}{\de'}  (\tau)$ (where $[\ep,\de]=\gamma\circ[\ep',\de']$ is the affine action on characteristics) by applying the linear
map $(C\tau+D)^{-1}$, which establishes the isomorphism $A_\tau\to A_{\gamma\circ\tau}$.

Thus the {\em generalized Andreotti-Mayer locus}
$$
F_{a,b}^{(g)}:=\lbrace \tau \in\calA_g \mid \dim T_a^{(g)}(\tau)\ge b\}
$$
is well-defined (i.e.~this condition defines a locus on $\calH_g$ invariant under the action of $\Sp(g,\ZZ)$). We will often drop the index $(g)$ if it is clear from the context. Recall that the usual Andreotti-Mayer loci $N_k^{(g)}:=F_{2,k}$ in our notation were defined in \cite{anma} as the loci of
ppav for which the theta divisor has at least a $k$-dimensional singular locus. These were introduced because of their relationship to the Schottky problem, as we will discuss in the next section. The generalized Andreotti-Mayer loci are often denoted $N_k^\ell:=F_{\ell+1,k}$ in our notation, but since we will often need to specify the genus in which we are working, we  prefer the $F$ notation.

Varying the point $\tau\in\calA_g$, we would also like to define a corresponding cycle $T_a^{(g)} \subset \calX_g$
in the universal family. For this we go to the level cover $\calA_g(8)$. For each
$(\ep,\de)\in  (\frac12\ZZ /\ZZ)^g$ the set
$$
\tT\ep\de = \left\{(\tau,z) \in \calX_g(8) \mid (\tau,z) \in \tT\ep\de  (\tau)
\right\}
$$
is well-defined, and since the fiber dimension is upper semi-continuous in the Zariski topology,
this is a well-defined subscheme of $\calX_g(8)$.
The Galois group $\Sp(g,\ZZ/8\ZZ)$ of the cover $\calA_g(8)\to\calA_g$ permutes the cycles $\tT\ep\de$, and thus we obtain a
cycle $T_a^{(g)} \subset \calX_g$ in the universal family over the stack.

\begin{rem}
The Riemann theta singularity theorem describes the singularities of the theta divisor for Jacobians of curves. In particular our locus $T_a^{(g)}$ restricted to the universal family of Jacobians of curves (i.e.~the pullback of $\calX_g$ to $\calM_g$), gives the Brill-Noether locus ${\mathcal W}^{a-1}_{g-1}$. The Brill-Noether loci have been extensively studied, and their projections to $\calM_g$ give examples of very interesting geometric subvarieties, see for example \cite{acgh} for the foundations of the theory, and \cite{gfarkBN} for more recent results.
\end{rem}

We shall also need the concept of {\em odd} and {\em even} $2$-torsion points. The $2$-torsion points
of an abelian variety $A_{\tau}= \CC^g/(\ZZ^g \tau + \ZZ^g)$ are of the form $\ep \tau + \de$ where
$\ep, \de \in (\frac12 \ZZ/\ZZ)^g$. It is standard to call a $2$-torsion point even if
$4\ep \cdot \de = 0$, and odd if this is $1$, exactly as in the case of our notion of even or odd characteristics.
This can be formulated in a more intrinsic way: if $L$ is a symmetric
line bundle representing the principal polarization of an abelian variety $A$, then the involution
$\iota: z \mapsto -z$ can be lifted to an involution on the total space of the line bundle $L$. A priori
there are two such lifts, but we can choose one of them by asking that $\iota^*(s)=s$, where $s$ is the
(up to scalar) unique section of $L$. The even, resp.~odd, $2$-torsion points are then
those where the involution acts by $+1$, resp.~$-1$ on the fiber.
The number of even (resp.~odd) $2$-torsion points
is equal to $2^{g-1}(2^g+1)$ (resp.~$2^{g-1}(2^g-1)$), see \cite[Chapter 4, Proposition 7.5]{bila}.
Applying this to the Riemann theta function
$\theta$ and the line bundle defined by it,  we obtain our above notion of even and odd $2$-torsion points.
Replacing $\theta$ by $\tc\ep\de$ means shifting by the $2$-torsion point $\ep\tau + \de$ (and multiplying by an exponential factor that is not important to us), and thus
studying the properties of $\theta$ at the point $\ep\tau + \de$ is equivalent to studying the properties of $\tc\ep\de$ at the origin.

We have already pointed out that the non-zero
$2$-torsion points define an irreducible family over $\calA_g$.
However, if the
level $\ell$ is even, then the $2$-torsion points form sections in $\calX_g(\ell)$ and in this case
we can talk about even and odd $2$-torsion points in families.

We note that the group
$\Gamma_g(8)/\Gamma_g(2)$ acts on the functions $\tc\ep\de(\tau,z)$ by certain signs.
This does not affect the vanishing
of this theta function, or of its gradient with respect to $z$, and thus we can often work on $\calA_g(2)$, rather
than on  $\calA_g(4,8)$ or $\calA_g(8)$.

\begin{ex}
Note that by definition we have $T_1^{(g)}=\T$ (more precisely, the union of all the $2^{2g}$ symmetric theta divisors), as this is the locus of points where $\theta$ is zero; thus $F_{1,k}=\calA_g$ for any $k\le g-1$, and $F_{1,g}=\emptyset$. In general we have $T_{a+1}^{(g)}\subseteq T_a^{(g)}$.

We can think of $T_2^{(g)}$ as the locus of points $(\tau,z)$ such that the theta divisor $\T_\tau\subset A_\tau$ is singular at the point $z$; following Mumford's notation, we think of this as the locus $S:=\Sing_{vert}\T:=T_2^{(g)}$.

From the {\em heat equation}
\begin{equation}\label{equ:heat}
\frac{\partial^2 \tc\ep\de(\tau,z)}{\partial z_j \partial z_k}=
2\pi i (1+\delta_{jk}) \frac{\partial \tc\ep\de(\tau,z)}{\partial \tau_{jk}}.
\end{equation}
we see that the second $z$-derivatives of $\theta(\tau,z)$ vanish if and only if all the first order $\tau$-derivatives vanish. Thus we have $T_3^{(g)}=\Sing\Theta_g$ is the locus of singularities of the global theta divisor, as a subvariety of $\calX_g$.
\end{ex}

\section{Loci in $\calA_g$ defined by singularities of the theta divisor}
In this section we collect known results and numerous open questions about the properties of the loci of
ppav with singular theta divisors defined above.
The first result says that the theta divisor of a generic ppav is smooth:
\begin{thm}[\cite{anma}]
For a ge\-ne\-ric ppav the theta divisor is smooth, i.e.~$N_0\subsetneq\calA_g$.
\end{thm}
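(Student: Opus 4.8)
The plan is to show that $N_0=F_{2,0}$ is a \emph{proper} closed subset of $\calA_g$ by exhibiting a single ppav whose theta divisor is smooth. First I would record that $N_0$ is closed: the vertical singular locus $S=T_2^{(g)}=\Sing_{vert}\T$ is cut out inside $\calX_g$ by the equations $\theta=0$ and $\partial_{z_1}\theta=\dots=\partial_{z_g}\theta=0$, hence (by the upper semicontinuity already noted for the fibers $\tT\ep\de(\tau)$) it is a closed subscheme, and $N_0=\pi(S)$ is its image under the projection $\pi\colon\calX_g\to\calA_g$. Since the fibers of $\pi$ are projective, $\pi$ is proper and $N_0$ is closed and $\Sp(g,\ZZ)$-invariant; it therefore equals all of $\calA_g$ if and only if \emph{every} theta divisor is singular. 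Thus it suffices to produce one $\tau$ with $\theta(\tau,\cdot)$ smooth.

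I would construct such a $\tau$ by induction on $g$, degenerating towards the boundary of $\calA_g$. The base case $g=1$ is clear, the theta divisor being a single reduced point on the elliptic curve. For the inductive step, assume $(\tau',\T')$ is a smooth theta divisor in dimension $g-1$ and set
$$
\tau=\begin{pmatrix}\tau' & b\\ b^{t} & t\end{pmatrix},\qquad b\in\RR^{g-1},\ \operatorname{Im}(t)\gg 0,
$$
which lies in $\calH_g$ once $\operatorname{Im}(t)$ is large. Writing $z=(z',z_g)$ and $q=\ee(t/2)$, only the summands with small $|n_g|$ survive as $q\to 0$, giving the expansion
$$
\theta(\tau,z)=\theta(\tau',z')+q\bigl(\ee(z_g)\,\theta(\tau',z'+b)+\ee(-z_g)\,\theta(\tau',z'-b)\bigr)+O(q^{4}).
$$
In the region where $\operatorname{Im}(z_g)$ stays bounded the divisor is a uniformly small perturbation of the smooth $\T'$, so a hypothetical singular point would force both $\theta(\tau',z')=O(q)$ and $\nabla_{z'}\theta(\tau',z')=O(q)$; since $\nabla_{z'}\theta'$ is bounded away from $0$ along the smooth divisor $\T'$, no such point exists for $q$ small.

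The remaining, and genuinely delicate, region is where $\operatorname{Im}(z_g)$ grows with $\operatorname{Im}(t)$ --- precisely the locus carrying the singularity of the product $A'\times E$ obtained at $b=0$. Here I would rescale $w=\ee(z_g)$ and observe that the vanishing of $\theta$ is governed by the two translated factors $\theta(\tau',z'+b)$ and $\theta(\tau',z'-b)$; for a \emph{generic} choice of the off-diagonal data $b$ these are transverse translates of $\T'$, and the theta divisor of $A_\tau$ is their smoothing, which one checks is again smooth. This local study near the degenerate locus is the main obstacle: one must verify that the genericity of $b$ really removes the node of the product and that the estimates match uniformly across the transition between the two regions. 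Granting this, $\T$ is smooth for $\operatorname{Im}(t)\gg 0$, the induction closes, and $N_0\subsetneq\calA_g$ follows. Alternatively, the heat equation~(\ref{equ:heat}) can be used to show that at any nondegenerate node the incidence locus $S$ is cut out transversally, so that $N_0$ is locally a hypersurface; this again gives properness once one exhibits a single node, for instance the product of two elliptic curves when $g=2$.
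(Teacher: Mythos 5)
The survey itself gives no proof of this statement --- it is simply quoted from Andreotti--Mayer --- so your proposal has to be judged on its own terms. The reduction to exhibiting a single smooth theta divisor is correct ($S=T_2^{(g)}$ is closed in $\calX_g$, $\pi$ is proper, $\calA_g$ is irreducible and $N_0$ is $\Sp(g,\ZZ)$-invariant), and the inductive Fourier--Jacobi degeneration is the standard classical route. But the proof as written is incomplete exactly where you flag it, and that step is the entire content of the theorem. What must be supplied in the region $\operatorname{Im}(z_g)\sim\operatorname{Im}(t)/2$ is: (i) that for generic $b$ the translate of $\T'$ by $b$ meets $\T'$ transversally --- this follows from Kleiman's transversality theorem for the translation action of $A'$ on itself, but note your $b$ is constrained to the real subtorus, so you must also observe that this subtorus is Zariski dense in $A'$ and hence meets the good open set; (ii) that at a common zero of $\theta(\tau',z')$ and $\theta(\tau',z'-b)$ the two gradients are therefore independent, so $\theta(\tau',z')+u\,\theta(\tau',z'-b)$, viewed as a function of $(z',u)$, has no critical point on its zero locus; and (iii) a compactness/uniformity argument showing that the neglected terms $q\,\ee(z_g)\theta(\tau',z'+b)$ plus the tail --- which in this transition region are only $O(|q|^{2})$ relative to the two dominant terms, not $O(|q|^{4})$ --- cannot create critical points, together with the matching across the overlap with the bounded-$\operatorname{Im}(z_g)$ region and the periodic identification $z_g\mapsto z_g+t$. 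None of these assertions is false, but until (i)--(iii) are carried out you have only handled the easy region, where smoothness is an immediate perturbation of $\T'$.

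Separately, the closing ``alternative'' via the heat equation is not a valid shortcut for general $g$. Mumford's argument shows that $S$ is smooth of codimension $g+1$ and that $\pi|_S$ is unramified at a point where the full $g\times g$ Hessian $(\partial^{2}\theta/\partial z_i\partial z_j)$ is nondegenerate; only then is $N_0$ locally a proper hypersurface. For a product $A'\times E$ with $g\ge 3$ the theta divisor is singular along the positive-dimensional locus $\T'\times\{p\}$, where the Hessian has rank $2$, so the required transversality fails; the product of two elliptic curves settles only $g=2$. To produce a nondegenerate isolated node in higher genus one must first smooth the product --- that is, one must run precisely the degeneration analysis whose key step is the unproved part of your main argument. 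So the alternative does not close the gap; it presupposes it.
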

Thus one is led to ask about the codimension of $N_0$ in $\calA_g$. To this end, note that
$S=T_2^{(g)}\subset\calX_g$ is the common zero locus of the theta function and its $g$ partial
derivatives with respect to $z$.
It follows that $\codim_{\calX_g} S\le g+1$, and in fact the dimension is precisely that:
\begin{thm}[\cite{debarredecomposes}]
The locus $S=T_2^{(g)}$ is purely of codimension $g+1$, and has two irreducible components $S_{\rm null}:=S\cap\calX_g[2]^\even$ (where $\calX_g[2]^\even\subset\calX_g$ denotes the universal family of even 2-torsion points), and the ``other'' component $S'$.
\end{thm}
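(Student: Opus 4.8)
The plan is to work throughout on a level cover $\calX_g(\ell)$ with $8\mid\ell$, where the theta transformation formula makes $\theta$ an honest section of a line bundle $\mathcal L$, and then to descend the final statement along the Galois group $\Sp(g,\ZZ/\ell\ZZ)\ltimes(\ZZ/\ell\ZZ)^{2g}$, which permutes the characteristics and the symmetric theta divisors. First I would recast $S=T_2^{(g)}$ intrinsically: on the universal theta divisor $\T_g=\{\theta=0\}$ the relative differential $d_\pi\theta$ is a section of the rank-$g$ bundle $(\Omega_\pi\otimes\mathcal L)|_{\T_g}$, where $\Omega_\pi$ is the relative cotangent bundle of $\pi:\calX_g(\ell)\to\calA_g(\ell)$, and $S$ is exactly its zero scheme. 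Since $\dim\T_g=\dim\calA_g+g-1$, the zero locus of a section of a rank-$g$ bundle has every component of codimension at most $g$ in $\T_g$, i.e.~codimension at most $g+1$ in $\calX_g$, recovering the bound already noted.

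Next I would produce the two candidate components. Splitting $S$ according to whether the singular point $z$ is an even $2$-torsion point isolates $S_{\rm null}$. At such a point, after translating by the corresponding even characteristic, $\theta$ is replaced by the even function $\tc\ep\de(\tau,z)$, whose gradient at $z=0$ vanishes identically by parity; hence the $g$ gradient equations are automatic and the single surviving condition is the vanishing of the theta constant $\tc\ep\de(\tau,0)$. Thus $S_{\rm null}=S\cap\calX_g[2]^{\even}$ is the intersection of the even $2$-torsion sections with the theta-null divisor $\tn$, which has dimension $\dim\calA_g-1$ and hence codimension $g+1$. Its irreducibility I would deduce from the classical irreducibility of $\tn$ together with transitivity of the monodromy $\Sp(g,\ZZ/2\ZZ)$ on even characteristics, which identifies the images of the individual even $2$-torsion sections.

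For the other component I would set $S'=\overline{S\setminus\calX_g[2]^{\even}}$ and argue irreducibility from the generic description of a singular theta point. Over the Jacobian locus the Riemann singularity theorem identifies the fibre of $S$ with the Brill--Noether locus $\mathcal W^{1}_{g-1}$, giving an irreducible family of singular points whose generic member is not $2$-torsion; an irreducibility/monodromy argument then shows these sweep out a single component, into whose closure the non-null singularities of the remaining ppav fall. The codimension bound from the first paragraph applies, so it remains only to rule out excess dimension.

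The main obstacle is purity: showing that no component of $S$ has codimension strictly less than $g+1$, equivalently that $\dim S\le\dim\calA_g-1$ everywhere. The delicate point is the behaviour over the decomposable locus $\calA_g^{\dec}$, where the theta divisor of a product $(A_1,\T_1)\times(A_2,\T_2)$ acquires the large singular crossing locus $\T_1\times\T_2$ of dimension $g-2$. A direct count shows that for a factor of dimension one this fibre jump exactly compensates the codimension of the decomposable stratum, so the resulting family has dimension $\dim\calA_g-1$ --- the same as $S'$ --- and one must verify with care that it lies in the closure of the generic indecomposable singularities rather than forming a spurious third component, and that the more singular strata contribute nothing of excess dimension. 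Establishing this uniform bound, together with the Cohen--Macaulayness of $\T_g$ that upgrades the resulting proper dimension to scheme-theoretic purity, is the heart of the argument; the component count then follows by combining it with the irreducibility statements for $S_{\rm null}$ and $S'$.
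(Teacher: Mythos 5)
The statement you are proving is quoted in the survey as a theorem of Debarre, with no proof given there, so there is no in-paper argument to compare against; judged on its own terms, your proposal correctly sets up the problem (the complete-intersection upper bound $\codim S\le g+1$, the parity argument showing that at an even $2$-torsion point the $g$ gradient equations are automatic so that $S_{\rm null}$ is the theta-null condition on the even torsion sections) but leaves both genuinely hard points unproved. The first is purity. You name it as ``the heart of the argument'' and then do not supply it: since a component of $S$ lying over a component of $N_k$ has fibre dimension at least $k$, the bound $\dim S\le\dim\calA_g-1$ is equivalent to $\codim_{\calA_g}N_k\ge k+1$ for every $k$, and this is not automatic --- it is exactly the content one must establish. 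The only known routes are Mumford's heat-equation argument (which the survey records as giving the case $k=1$) and its extension by Debarre, together with the Ein--Lazarsfeld-type control of $N_{g-2}=\calA_g^{\dec}$; your discussion of the decomposable locus addresses one potential source of excess dimension but says nothing about components of $S$ dominating loci of indecomposable ppav with large singular locus, which is where the heat equation is indispensable.

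The second gap is the irreducibility of $S'$. Restricting to the Jacobian locus and invoking the Riemann singularity theorem only controls a family of dimension roughly $3g-3+(g-4)$, which for large $g$ is far smaller than $\dim S'=g(g+1)/2-1$; a hypothetical second component of $S\setminus S_{\rm null}$ whose generic point is an indecomposable non-Jacobian ppav need not specialize into the Jacobian locus at all, so monodromy over $\calM_g$ proves nothing about it. Debarre's actual argument is an induction on $g$ via degeneration to the boundary of a partial compactification (torus rank one, where singularities of the semiabelic theta divisor are governed by the theta geometry in dimension $g-1$), which is a different and essentially unavoidable mechanism. Finally, to get \emph{exactly} two components you must also show that $S_{\rm null}$ is not contained in the closure of $S\setminus\calX_g[2]^{\even}$ (e.g.\ that for generic $\tau\in\tn$ the only singularity of $\T_\tau$ is the even $2$-torsion point) and that $S'$ is nonempty; neither is addressed.
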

Moreover, since the map $\pi$ from $T_2^{(g)}$ onto its image
has fibers of dimension at least $k$ over $N_k$, this implies that the codimension of (any irreducible component of) $N_k$ within $\calA_g$ is at least $k+1$. The $k=1$ case of this result is in fact due to Mumford, who obtained it by an ingenious argument using the heat equation:
\begin{thm}[\cite{mumforddimag}]
$\codim_{\calA_g} N_1\ge 2$.
\end{thm}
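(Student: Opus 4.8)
The plan is to translate the defining condition of $N_1$ into a rank condition on the $z$-Hessian of the theta function, and then to use the heat equation (\ref{equ:heat}) to convert each drop of that rank into extra codimension in the base. Throughout I work on $S=T_2^{(g)}\subset\calX_g$, cut out by the $g+1$ functions $\theta,\partial\theta/\partial z_1,\dots,\partial\theta/\partial z_g$; the fibre of $\pi\colon S\to\calA_g$ over $\tau$ is $\Sing\T_\tau$, and $N_k$ is the locus where this fibre has dimension $\ge k$. I write $H=H(\tau,z)$ for the symmetric matrix $(\partial^2\theta/\partial z_j\partial z_k)$. \textbf{Step 1 (reduction).} Since $N_1\subseteq N_0$, the intersection of $N_1$ with the codimension $\ge 2$ part of $N_0$ is automatically of codimension $\ge 2$; for each codimension-one (divisorial) component $D$ of $N_0$ it therefore suffices to show that the generic point of $D$ does not lie in $N_1$, since then $N_1\cap D$ is a proper closed subset of the irreducible divisor $D$.

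\textbf{Step 2 (heat equation and the differential of $\pi$).} At a point $(\tau,z)\in S$ one has $\nabla_z\theta=0$, so (\ref{equ:heat}) gives $\partial\theta/\partial\tau_{jk}=\tfrac{1}{2\pi i(1+\delta_{jk})}H_{jk}$, and differentiating once more $\partial^2\theta/\partial z_l\partial\tau_{jk}$ is a nonzero constant multiple of $\partial^3\theta/\partial z_l\partial z_j\partial z_k$. Computing $T_{(\tau,z)}S$ from the defining equations then yields two facts, in which all the constants above are harmless. Contracting the differential of the equation $\theta=0$ shows that the covector attached to the symmetric matrix $H$ annihilates $\mathrm{im}(d\pi|_S)$; contracting the differential of $\partial\theta/\partial z_l=0$ against any $v\in\ker H$ shows that the covector $v^{t}M$, with $M_{l,(jk)}=\partial^2\theta/\partial z_l\partial\tau_{jk}$, also annihilates $\mathrm{im}(d\pi|_S)$. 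Moreover $\ker(d\pi|_S)=\ker H$, so $\dim_z\Sing\T_\tau\le\op{corank}H(\tau,z)$; in particular a nondegenerate double point of $\T_\tau$ is an isolated singularity.

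\textbf{Step 3 (the codimension jump).} Suppose, for contradiction, that the generic point $\tau_0$ of a divisorial component $D$ of $N_0$ lies in $N_1$. Let $\widetilde D\subset S$ be the component dominating $D$ whose fibres contain the positive-dimensional singular loci; along the generic fibre the tangent direction $v$ to $\Sing\T_{\tau_0}$ lies in $\ker H$, so $\op{corank}H\ge 1$ on a dense subset of $\widetilde D$. By Step 2, at the generic (smooth) point of $\widetilde D$ the image $\mathrm{im}(d\pi|_{\widetilde D})$ is annihilated by the two covectors attached to $H$ and to $v^{t}M$. If these are linearly independent then $\dim\pi(\widetilde D)\le\dim\calA_g-2<\dim D$, contradicting $\pi(\widetilde D)=D$; hence $\tau_0\notin N_1$, which by Step 1 finishes the proof. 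Writing everything through $z$-derivatives of $\theta$ as in Step 2, the second covector is the one attached to $D_vH$, the derivative of the Hessian along $v$, so the needed independence is exactly that $H$ and $D_vH$ are not proportional as symmetric matrices at a generic singular point.

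\textbf{Step 4 (the main obstacle).} The whole argument rests on the independence asserted in Step 3, and this is where the real work lies. Proportionality $D_vH=\lambda H$ would force $(D_vH)v=0$; but differentiating the identity $H(\tau_0,\gamma(t))\gamma'(t)=0$ along a curve $\gamma\subset\Sing\T_{\tau_0}$ with $\gamma'(0)=v$ gives $(D_vH)v=-H\gamma''(0)$, so proportionality is equivalent to $\gamma''(0)\in\ker H$, a strong flatness of the singular locus that one expects to fail for a generic positive-dimensional $\Sing\T_{\tau_0}$. Turning this heuristic into a proof valid at the generic point of an \emph{arbitrary} divisorial component of $N_0$ --- controlling the higher-corank strata and the possibly excess-dimensional or non-reduced components of $S$, without invoking the purity of $S$ --- is the delicate heat-equation analysis that I expect to be the crux; everything preceding it is linear algebra on $T_{(\tau,z)}S$.
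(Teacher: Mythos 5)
Your Steps 1--3 are a correct reduction and a correct piece of linear algebra: the heat equation does show that at a point of $S=T_2^{(g)}$ the covector attached to the Hessian $H$ annihilates $\mathrm{im}(d\pi|_S)$, that contracting the differentials of $\partial\theta/\partial z_l$ against $v\in\ker H$ produces the second covector attached to $D_vH$, and that two independent such covectors would force $\dim\pi(\widetilde D)\le\dim\calA_g-2$. But the proof has a genuine gap, and you name it yourself: everything hinges on the non-proportionality of $H$ and $D_vH$ at a generic point of the relevant component, and Step 4 offers only the heuristic that the flatness condition $\gamma''(0)\in\ker H$ ``is expected to fail.'' This is not a removable technicality --- it is precisely where the content of Mumford's theorem lives. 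The degeneracy is a real phenomenon, not a paranoid worst case: at an even $2$-torsion point $m=\tau\ep+\de$ one computes from the evenness of $\tc\ep\de$ that the third $z$-derivatives of $\theta$ at $m$ reduce to $(v\cdot\ep)H$ up to constant (the genuinely odd-order term of $\tc\ep\de$ vanishes and $Hv=0$ kills the rest), so $D_vH$ is \emph{automatically proportional} to $H$ there; likewise the first covector dies wherever the singular point has multiplicity $\ge 3$, a case you never exclude. So one must work at a carefully chosen generic point of the positive-dimensional singular locus and actually rule out the proportionality --- this is the ``ingenious argument using the heat equation'' that the survey attributes to Mumford, and it is absent from your write-up. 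Without it the argument produces only one covector and yields nothing about $N_1$.

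For comparison, the survey obtains the statement by an entirely different and much shorter route, as the $k=1$ case of a general fiber-dimension count: by Debarre's purity theorem, stated immediately before, $T_2^{(g)}$ is purely of codimension $g+1$ in $\calX_g$, hence of dimension $\dim\calA_g-1$; since the fibers of $\pi$ over $N_k$ have dimension at least $k$, every irreducible component of $N_k$ has codimension at least $k+1$ in $\calA_g$. Your approach is the historically original one and, if completed, is more self-contained (it does not presuppose purity of $S$); but as written it is an outline with the decisive lemma missing, whereas the purity-based argument closes immediately.
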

It thus follows that both components of $S=T_2^{(g)}$ project generically finitely on their image in $\calA_g$, and this implies the earlier result of Beauville:
\begin{thm}[\cite{beauville}]
The locus $N_0$ is a divisor in $\calA_g$.
\end{thm}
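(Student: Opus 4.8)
The plan is to exploit Debarre's structural result together with the special nature of the even-$2$-torsion component $S_{\rm null}$. By Debarre's theorem $S=T_2^{(g)}$ is pure of codimension $g+1$ in $\calX_g$, so every irreducible component has dimension $\dim\calX_g-(g+1)=\dim\calA_g-1$; in particular $\dim S_{\rm null}=\dim S'=\dim\calA_g-1$. Since $N_0=\pi(S)$ and $\pi$ can only drop dimension, we immediately get $\dim N_0\le\dim\calA_g-1$, so $N_0$ has codimension at least $1$; combined with the Andreotti--Mayer theorem $N_0\ne\calA_g$ this is the easy half. The real task is to produce a component of $N_0$ of codimension exactly $1$, i.e. to show that $\pi$ does not collapse the components of $S$.

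First I would treat $S_{\rm null}=S\cap\calX_g[2]^\even$. The point is that the even $2$-torsion locus $\calX_g[2]^\even\to\calA_g$ is finite (over a level cover the $2$-torsion points are disjoint sections, and the even ones form a finite subfamily of cardinality $2^{g-1}(2^g+1)$). Hence $\pi|_{S_{\rm null}}$ is a finite morphism onto its image, which forces
$$\dim\pi(S_{\rm null})=\dim S_{\rm null}=\dim\calA_g-1.$$
Thus $\pi(S_{\rm null})=\tn$ is a divisor, and since $S_{\rm null}\subseteq S$ we have $\tn\subseteq\pi(S)=N_0$. Because $N_0\ne\calA_g$, this already shows $N_0$ is a divisor. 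The geometric meaning is that at an even $2$-torsion point the corresponding theta function with characteristic is even in $z$, so a vanishing theta constant automatically forces the gradient to vanish, producing a singular point of the theta divisor.

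To obtain the sharper statement that \emph{both} components of $S$ project generically finitely --- so that $N_0$ is the union of $\tn$ and the honest Andreotti--Mayer divisor $\pi(S')$ --- I would bring in Mumford's bound $\codim_{\calA_g}N_1\ge2$. The argument is by contradiction on the component $Z=S'$: if $\pi|_Z$ were not generically finite then its general fibre would be positive-dimensional, i.e. $\Sing\T_\tau$ would be at least $1$-dimensional for general $\tau\in\pi(Z)$, placing $\pi(Z)\subseteq N_1$. The role of Mumford's estimate is to control this locus: since $N_1$ has codimension at least $2$ while $\tn$ has codimension $1$, the dense open $N_0\setminus N_1$ is nonempty in every divisorial component of $N_0$, and over it $\Sing\T_\tau$ is finite, so $\pi$ is genuinely generically finite there.

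The step I expect to be the main obstacle is precisely the generic finiteness of the \emph{second} component $S'$: a bare dimension count is consistent with $S'$ collapsing into $N_1$, since $\dim S'=\dim\calA_g-1$ and a possible image inside $N_1$ of dimension $\le\dim\calA_g-2$ are numerically compatible, so one cannot exclude the collapse by codimension bookkeeping alone. Here I would lean on the finer content of Debarre's theorem --- that $S'$ is a genuine separate component whose general point corresponds to an \emph{isolated} singularity of the theta divisor --- rather than on Mumford's inequality in isolation. For the bare assertion of Beauville's theorem, however, the $S_{\rm null}$ argument of the second paragraph already suffices, and I would present that as the backbone of the proof.
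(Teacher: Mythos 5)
Your proposal is correct for the statement as literally given, and it follows the same high-level route as the survey (Debarre's purity of $S=T_2^{(g)}$ in codimension $g+1$ plus Mumford's bound $\codim_{\calA_g}N_1\ge 2$), but with one genuinely different and worthwhile ingredient: you dispose of the component $S_{\rm null}$ directly by observing that $S_{\rm null}\subset\calX_g[2]$ and the $2$-torsion family is finite over $\calA_g$, so $\pi|_{S_{\rm null}}$ cannot drop dimension and $\tn=\pi(S_{\rm null})$ is honestly a divisor; together with $N_0\subsetneq\calA_g$ this already yields $\codim N_0=1$ without any appeal to $N_1$. The paper instead compresses everything into the single assertion that ``both components of $S$ project generically finitely,'' deduced from Debarre plus Mumford. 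Your closing remark puts your finger on exactly the right place: for the component $S'$ the bookkeeping $\dim S'=\dim\calA_g-1$ and $\codim N_1\ge 2$ is numerically consistent with $\pi(S')\subseteq N_1$ and one-dimensional fibres, so generic finiteness of $\pi|_{S'}$ does not follow from the dimension count alone; the survey's one-sentence deduction elides this, and closing it requires finer input (Mumford's heat-equation analysis showing the generic singularity on each component of $N_0$ is isolated, or Beauville's original argument), which is also what one needs if ``divisor'' is read as \emph{pure} codimension one, i.e.\ to know that $N_0'=\pi(S')$ is itself divisorial and not swallowed by $N_1$. In short: your $S_{\rm null}$ argument is a clean, self-contained proof of the weak statement, and your diagnosis of where the remaining content lies is accurate and, if anything, more honest than the survey's own phrasing.
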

In fact scheme-theoretically we have (this was also first proved in \cite{mumforddimag})
$$
 N_0=\tn+2N_0'=\pi(S_{\rm null})\cup \pi(S'),
$$
where $\tn\subset\calA_g$ denotes the theta-null divisor --- the locus of ppav for which an even $2$-torsion point lies (and thus is a singular point of) the theta divisor, and $N_0'$ denotes the other irreducible
component of $N_0$, i.e.~the closure of the locus of ppav whose theta divisor is singular at some point that is not $2$-torsion.

We note that unlike $T_2^{(g)}$, which is easily defined by $g+1$ equations in $\calX_g$, it is not at all clear how to write defining equations for $N_1=F_{2,1}$ inside $\calA_g$. In particular, note that if the locus $\Sing\T_\tau$ locally at $z$ has dimension at least one, then the $g$ second derivatives of the theta function of the form $\partial_v\partial_{z_i}\theta(\tau,z)$, where $v$ is a tangent vector to $\Sing\T_\tau$ at $z$, must all vanish --- but this is of course not a sufficient condition. Still, one would expect that $N_1$
has high codimension. However, this, and questions on higher Andreotti-Mayer loci, are exceptionally hard, as there are few techniques available for working with conditions at an arbitrary point on a ppav, as opposed to the origin or a $2$-torsion point. Many open questions remain, and are surveyed in detail in \cite[section~4]{cmsurvey} and in \cite[section~7]{grAgsurvey}. We briefly summarize the situation.

The original motivation for Andreotti and Mayer to introduce the loci $N_k$ was their relationship to
the Schottky problem.
\begin{thm}[\cite{anma}]
The locus of Jacobians $\calJ_g$ is an irreducible component of $N_{g-4}$; the locus of hyperelliptic Jacobians $\Hyp_g$ is an irreducible component of $N_{g-3}$.
\end{thm}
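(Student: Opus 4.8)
The plan is to handle both assertions by a single two-step strategy. First I would establish the set-theoretic inclusions $\calJ_g\subseteq N_{g-4}$ and $\Hyp_g\subseteq N_{g-3}$; then, to upgrade ``inclusion'' to ``irreducible component'', I would compute the Zariski tangent space to $N_k$ at a general (hyperelliptic) Jacobian and show it coincides with the tangent space to the Jacobian locus, so that the two varieties have the same dimension there.

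The inclusions come from the Riemann--Kempf singularity theorem. As recalled in the Remark above, for $A=\Jac C$ one has $\Sing\T = W^1_{g-1}(C)$, and more precisely $\mult_L\T = h^0(L)$. The Brill--Noether number $\rho(g,1,g-1)=g-4$ gives $\dim W^1_{g-1}=g-4$ for general $C$; since the fibre dimension of $\Sing\T$ can only jump up on proper subloci, $\dim\Sing\T\ge g-4$ for every curve, whence $\calJ_g\subseteq N_{g-4}$. For a hyperelliptic curve the $g^1_2$ forces $\dim W^1_{g-1}=g-3$, giving $\Hyp_g\subseteq N_{g-3}$.

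For the component statement I would work infinitesimally at a general $[A]=[\Jac C]$, where a tangent vector to $\calA_g$ is a symmetric matrix $\dot\tau\in\Sym^2 T_0A$. If a singular point $z_0\in\Sing\T$ is to stay on $\T$ under $\tau\mapsto\tau+\ep\dot\tau$ (allowing $z_0$ to move), then expanding $\theta$ to first order and using $\nabla_z\theta(z_0)=0$ leaves only $\langle\nabla_\tau\theta(z_0),\dot\tau\rangle=0$; by the heat equation (\ref{equ:heat}) this says precisely that the Hessian $Q_{z_0}=(\partial_{z_j}\partial_{z_k}\theta(z_0))$ --- the quadratic tangent cone to $\T$ at $z_0$ --- pairs to zero with $\dot\tau$. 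Imposing this over the whole irreducible $(g-4)$-dimensional locus $\Sing\T = W^1_{g-1}$ forces $\dot\tau$ to annihilate the span of all the $Q_{z_0}$. By Riemann--Kempf each $Q_{z_0}$ (at a point with $h^0=2$) is a quadric of rank $4$ vanishing on the canonical image of $C$, hence lies in $I_2(C)=\ker(\Sym^2 H^0(K_C)\to H^0(2K_C))$ --- which is exactly the conormal space to $\calJ_g$ at $[A]$. Therefore $T_{[A]}N_{g-4}\subseteq\op{Ann}\big(\op{span}\{Q_{z_0}\}\big)$.

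The remaining, and decisive, point is that these rank-$4$ tangent-cone quadrics actually span all of $I_2(C)$; this is the hard part of the argument, and it is precisely Green's theorem that the homogeneous ideal of a general (in particular non-trigonal, non-plane-quintic) canonical curve is spanned in degree two by its quadrics of rank $\le 4$. Granting it, $T_{[A]}N_{g-4}\subseteq\op{Ann}(I_2(C))=T_{[A]}\calJ_g$, while the reverse inclusion is automatic from $\calJ_g\subseteq N_{g-4}$; hence the tangent spaces agree, $\dim_{[A]}N_{g-4}=\dim\calJ_g$, and $\calJ_g$ is an irreducible component. The bookkeeping matches because $\codim\calJ_g=\binom{g-2}{2}=\dim I_2(C)$. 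For the hyperelliptic statement I would run the identical computation with the canonical map replaced by the $2{:}1$ hyperelliptic canonical map onto the rational normal curve $R\subset\PP^{g-1}$ of degree $g-1$: here $\codim\Hyp_g=\binom{g-1}{2}=\dim I_2(R)$, and the spanning of $I_2(R)$ by the tangent-cone quadrics is elementary, being visible from the $2\times 2$ minors of the Hankel matrix cutting out $R$.
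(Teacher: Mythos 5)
Your proposal is essentially correct and is the standard modern proof of the Andreotti--Mayer theorem; by contrast the survey offers no argument at all beyond the citation of \cite{anma} and the single remark that the result ``follows by applying the Riemann--Kempf theta singularity theorem on Jacobians.'' That remark really only covers your first step: the identification $\Sing\T=W^1_{g-1}$ together with Brill--Noether existence (resp.\ Martens for the hyperelliptic case) gives the inclusions $\calJ_g\subseteq N_{g-4}$ and $\Hyp_g\subseteq N_{g-3}$. The component statement needs exactly the extra machinery you supply: the heat-equation computation showing that a tangent vector $\dot\tau$ to $N_{g-4}$ at $[\Jac C]$ must annihilate the Hessians $Q_{z_0}$, Kempf's description of these Hessians as rank~$\le 4$ quadrics through the canonical curve, and the spanning of $I_2(C)$ by these tangent-cone quadrics. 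You are right to flag the last point as the decisive one; it is Green's theorem (1984), which postdates \cite{anma} --- Andreotti and Mayer originally argued around it --- but your reduction is the now-standard formulation, and the dimension bookkeeping ($\codim\calJ_g=\binom{g-2}{2}=\dim I_2(C)$, $\codim\Hyp_g=\binom{g-1}{2}=\dim I_2(R)$) is correct.

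Two steps deserve more care. First, you perform the tangent-space computation at a general Jacobian, but if a component $Z$ of $N_{g-4}$ strictly contained $\overline{\calJ_g}$, then $[\Jac C]$ would not be a general (in particular, not necessarily smooth) point of $Z$, so the Zariski tangent space $T_{[\Jac C]}Z$ need not be spanned by tangents to arcs, which is all your first-order expansion constrains. The standard repairs are either to work with the scheme $S=V(\theta,\partial_{z_1}\theta,\dots,\partial_{z_g}\theta)\subset\calX_g$, whose Zariski tangent space at $(\tau,z_0)$ is already annihilated by $Q_{z_0}$ via the heat equation, or to run the computation at a general point $\tau$ of $Z$ and specialize $\tau\to[\Jac C]$: since the limiting singular loci fill up the irreducible $(g-4)$-dimensional $W^1_{g-1}$, the span of the quadrics at the general point of $Z$ has dimension at least $\dim I_2(C)$, giving $\dim Z\le 3g-3$ directly. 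Second, in the hyperelliptic case the tangent cones are not the $2\times 2$ Hankel minors: for $L=g^1_2+p_1+\cdots+p_{g-3}$ the quadric $\bigcup_{D\in|L|}\overline{D}$ is the rank-$3$ cone with vertex the span of the images of the $p_i$ over the conic obtained by projecting $R$ from those points, and one must still verify that these particular cones, as the $p_i$ vary, span all of $I_2(R)$. This is true and elementary, but it is a genuine (classical) verification rather than something read off from the Hankel presentation of $R$.
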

In modern language, this result follows by applying the Riemann-Kempf theta singularity theorem on Jacobians. Generalizing this to singularities of higher multiplicity, we have as a corollary of Martens' theorem
\begin{prop}
$\Hyp_g=F_{k,g-2k+1}^{(g)} \cap \calJ_g$, while $F_{k,g-2k+2}^{(g)}\cap\calJ_g=\emptyset$.
\end{prop}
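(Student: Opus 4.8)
The plan is to reduce the statement to Brill--Noether theory via the Riemann singularity theorem and then to invoke Martens' theorem. Fix a smooth curve $C$ of genus $g$ and identify its Jacobian with $\Pic^{g-1}(C)$, so that the theta divisor becomes ${\mathcal W}_{g-1}=\{L\mid h^0(C,L)\ge 1\}$. The Riemann singularity theorem computes the multiplicity of $\Theta$ at a point $L\in\Pic^{g-1}(C)$ as $\mult_L\Theta=h^0(C,L)$. Hence, for $\tau=\Jac(C)$, the locus $T_k^{(g)}(\tau)=\{z\mid\mult_z\theta\ge k\}$ is exactly the Brill--Noether locus ${\mathcal W}^{k-1}_{g-1}=\{L\mid h^0(C,L)\ge k\}$, as already recorded in the Remark above. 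So the two assertions become dimension statements about ${\mathcal W}^{k-1}_{g-1}$: the first says $\dim{\mathcal W}^{k-1}_{g-1}\ge g-2k+1$ holds precisely for hyperelliptic $C$, and the second says $\dim{\mathcal W}^{k-1}_{g-1}\ge g-2k+2$ never holds.

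Both then follow directly from Martens' theorem. Put $d=g-1$ and $r=k-1$; we take $k\ge 2$ (so $r\ge 1$ and Martens applies), the interesting range being $g-2k+1\ge 0$. Martens' theorem gives
$$
\dim{\mathcal W}^{k-1}_{g-1}\le (g-1)-2(k-1)=g-2k+1,
$$
with equality for some irreducible component if and only if $C$ is hyperelliptic. The upper bound immediately yields $F_{k,g-2k+2}^{(g)}\cap\calJ_g=\emptyset$, since no Jacobian can satisfy $\dim T_k^{(g)}\ge g-2k+2$. For the first equality, the same bound makes the condition $\dim T_k^{(g)}(\tau)\ge g-2k+1$ equivalent to $\dim{\mathcal W}^{k-1}_{g-1}=g-2k+1$, which by the equality clause of Martens' theorem occurs exactly when $C$ is hyperelliptic; hence $F_{k,g-2k+1}^{(g)}\cap\calJ_g=\Hyp_g$. (Specializing $k=2$ recovers the statement that $\Hyp_g$ is the hyperelliptic part of $N_{g-3}=F_{2,g-3}^{(g)}$, as in the preceding theorem.)

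The substantive input is Martens' theorem, and in particular its equality clause. The easy direction---that a hyperelliptic curve attains the bound---is explicit: writing $g^1_2$ for the hyperelliptic pencil, the line bundles $(k-1)g^1_2+D$ with $D$ effective of degree $g-2k+1$ all lie in ${\mathcal W}^{k-1}_{g-1}$ (since $h^0((k-1)g^1_2)=k$), and as $D$ ranges over $\Sym^{g-2k+1}(C)$ they sweep out a component of dimension exactly $g-2k+1$, the map $D\mapsto (k-1)g^1_2+D$ being birational onto its image because a general effective divisor of degree $\le g$ has $h^0=1$. The hard direction---that every non-hyperelliptic curve satisfies the \emph{strict} inequality $\dim{\mathcal W}^{k-1}_{g-1}\le g-2k$---is the real content, and I would simply cite the inductive argument built on Clifford's theorem given in \cite{acgh}. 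One should finally note the boundary of the range: when $g-2k+1=0$ the claim reduces to ``${\mathcal W}^{k-1}_{g-1}\ne\emptyset$ iff $C$ is hyperelliptic,'' which is again Martens (or Clifford directly), and when $g-2k+1<0$ both loci are empty by Clifford's theorem, so the stated identities hold trivially.
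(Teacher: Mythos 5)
Your argument is correct and is precisely the route the paper intends: the Riemann singularity theorem identifies $T_k^{(g)}$ over $\calJ_g$ with the Brill--Noether locus ${\mathcal W}^{k-1}_{g-1}$, and both assertions are then exactly the dimension bound and the equality clause of Martens' theorem, which is the one-line justification the paper gives. The only quibble is your closing aside: for $g-2k+1<0$ the first identity does not ``hold trivially,'' since $\Hyp_g\neq\emptyset$ while $T_k^{(g)}(\tau)=\emptyset$ for every Jacobian, so the statement should simply be read as restricted to the range $k\ge 2$, $2k\le g+1$ where Martens applies.
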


One also sees that $N_{g-2}\cap\calJ_g=\emptyset$, and thus it is natural to ask to describe this locus (note that clearly $N_{g-1}=\emptyset$). We shall give the answer below. A novel aspect was brought to the subject by Koll\'ar
who considered the pair $(A,\T)$ from a new perspective, proving
\begin{thm}[Koll\'ar, {\cite[Theorem~17.3]{kollarbook}}]
The pair $(A,\T)$ is log canonical. This implies that the theta function cannot have a point of multiplicity greater than $g$, i.e.~$T_{g+1}^{(g)}=\emptyset=F_{g+1,0}^{(g)}$, and, more generally, $F_{k,g-k+1}^{(g)}=\emptyset$.
\end{thm}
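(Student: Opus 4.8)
The plan is to prove the underlying assertion that the pair $(A,\T)$ is log canonical — this is the analytic heart of the statement — and then to deduce the emptiness of $T_{g+1}^{(g)}$ and of $F_{k,g-k+1}^{(g)}$ as purely formal consequences of log canonicity. I would first reduce the log canonicity to the indecomposable case: if $(A,\T)\cong(A_1,\T_1)\times(A_2,\T_2)$ then $\T=p_1^*\T_1+p_2^*\T_2$, and log canonicity of a product of pairs is equivalent to log canonicity of each factor, so induction on $g$ reduces us to $(A,\T)$ indecomposable, in which case $\T$ is reduced and irreducible.

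For the indecomposable case I would argue by contradiction using multiplier ideals and generic vanishing on abelian varieties. Set $c:=\op{lct}(A,\T)$ and suppose $c<1$, so that $\calJ:=\calJ(A,c\T)$ is a proper ideal cosupported on a nonempty non-klt locus $Z\subset\T$. Since $(1-c)\T$ is ample, Nadel vanishing gives $H^i(A,\calO_A(\T)\otimes\calJ\otimes P)=0$ for all $i>0$ and all $P\in\Pic^0(A)$. Feeding this into $0\to\calO_A(\T)\otimes\calJ\to\calO_A(\T)\to\mathcal{Q}\to0$ with $\mathcal{Q}\neq 0$ supported on $Z$, and using that $\calO_A(\T)\otimes P\cong t_x^*\calO_A(\T)$ is again a principal polarization, so that $h^0(\calO_A(\T)\otimes P)=1$ for every $P$, one finds $h^i(\mathcal{Q}\otimes P)=0$ for $i>0$ and $h^0(\mathcal{Q}\otimes P)=\chi(\mathcal{Q})\in\{0,1\}$ constant in $P$. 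If this constant is $0$, the Fourier--Mukai transform of $\mathcal{Q}$ vanishes, forcing $\mathcal{Q}=0$, a contradiction; hence $\chi(\mathcal{Q})=1$ and the restriction $H^0(\calO_A(\T)\otimes P)\xrightarrow{\sim}H^0(\mathcal{Q}\otimes P)$ is an isomorphism for every $P$.

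The remaining step, which I expect to be the main obstacle, is to extract a contradiction from this last configuration. Here one analyses a minimal log canonical centre $W$ of $(A,c\T)$: Kawamata subadjunction shows $W$ is normal, and the isomorphisms above, holding simultaneously for the entire family of translates $\T_P$ as $P$ ranges over $\Pic^0(A)$, impose strong rigidity on $Z$. The goal is to show that $Z$ must be invariant under translation by a positive-dimensional abelian subvariety $B\subseteq A$; once this is established, $\T$ is (up to the obvious correction) pulled back from $A/B$, so $(A,\T)$ is decomposable, contradicting our reduction. Controlling the non-klt locus precisely enough to produce this translation-invariance is the delicate point, and is exactly where the generic vanishing theory of Green--Lazarsfeld must be combined with the structure theory of log canonical centres.

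Granting log canonicity, the stated vanishings are formal consequences of the induced bound on multiplicities. For any irreducible $Z\subseteq A$, blowing up the generic point of $Z$ produces an exceptional divisor whose discrepancy for $(A,\T)$ equals $\codim_A Z-1-\mult_{\eta_Z}\T$; log canonicity forces this to be $\ge -1$, giving $\mult_{\eta_Z}\T\le\codim_A Z$. Taking $Z$ a point yields $\mult_z\T\le g$ for every $z$, that is $T_{g+1}^{(g)}=\emptyset=F_{g+1,0}^{(g)}$. In general, were some component $Z$ of $T_k^{(g)}(\tau)$ to satisfy $\dim Z\ge g-k+1$, then $\codim_A Z\le k-1$ while $\mult_{\eta_Z}\T\ge k$ by the definition of $T_k^{(g)}$, contradicting the bound; hence $\dim T_k^{(g)}\le g-k$, which is precisely $F_{k,g-k+1}^{(g)}=\emptyset$.
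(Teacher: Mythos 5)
The paper does not prove this statement --- it is quoted from Koll\'ar's book --- so your attempt has to be measured against the standard argument (Koll\'ar's, streamlined by Ein--Lazarsfeld). Your final paragraph, deducing $\mult_{\eta_Z}\T\le\codim_A Z$ from log canonicity by computing the discrepancy of the blow-up along $Z$, and hence the emptiness of $T_{g+1}^{(g)}$ and $F_{k,g-k+1}^{(g)}$, is correct and is exactly the standard deduction. The first two paragraphs (Nadel vanishing for $\calO_A(\T)\otimes\calJ\otimes P$, constancy of $\chi(\mathcal{Q}\otimes P)$, and the Fourier--Mukai argument that $\chi(\mathcal{Q})=0$ forces $\mathcal{Q}=0$) are also correct.

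The genuine gap is your third paragraph: the case $\chi(\mathcal{Q})=1$ is not proved, only described as a program (minimal lc centres, Kawamata subadjunction, Green--Lazarsfeld generic vanishing, translation-invariance of $Z$), and you yourself flag it as the main obstacle. In fact this entire case is vacuous, for a reason you overlooked: since $c=\op{lct}(A,\T)<1$ and $\T$ is an integral divisor, monotonicity of multiplier ideals gives $\calJ(A,c\T)\supseteq\calJ(A,\T)=\calO_A(-\T)$, so the section $\theta$ itself lies in $H^0(A,\calO_A(\T)\otimes\calJ)$. Hence $h^0(\calO_A(\T)\otimes\calJ)\ge 1$, i.e.\ $\chi(\mathcal{Q})=1-h^0(\calO_A(\T)\otimes\calJ)=0$ --- equivalently, for $P=\calO_A$ the restriction map $H^0(\calO_A(\T))\to H^0(\mathcal{Q})$ kills $\theta$ and cannot be an isomorphism. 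Your own Fourier--Mukai argument then gives $\mathcal{Q}=0$, contradicting $\calJ\subsetneq\calO_A$, and the proof is complete. (Equivalently, and closer to Ein--Lazarsfeld's phrasing: $h^0(\calO_A(\T)\otimes\calJ\otimes P)=\chi=1$ for every $P$ means every translate $\T_P$ contains the non-klt locus $Z$, which is absurd since the translates have empty common intersection.) With this one-line observation your reduction to the indecomposable case also becomes unnecessary, as the vanishing argument applies to an arbitrary ppav.
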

The extreme case $F_{g,0}$ was then considered by Smith and Varley who characterized it as follows:
\begin{thm}[Smith and Varley \cite{smva}]
If the theta divisor has a point of multiplicity $g$, then the ppav is a product of elliptic curves: $F_{g,0}=\Sym^g(\calA_1)$.
\end{thm}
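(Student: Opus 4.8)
The plan is to establish the two inclusions of $F_{g,0}^{(g)}=\Sym^g(\calA_1)$ separately, the inclusion $\supseteq$ being a direct computation and $\subseteq$ resting on the singularity theory of theta divisors. For $\supseteq$, let $(A,\T)=\prod_{i=1}^g(E_i,o_i)$ be a product of elliptic curves, so that $\tau=\diag(\tau_1,\dots,\tau_g)$ and the Riemann theta function factors as $\theta(\tau,z)=\prod_{i=1}^g\theta(\tau_i,z_i)$. Since the multiplicity of a product of holomorphic functions is the sum of the multiplicities, at the point $z_0=(o_1,\dots,o_g)$ we get $\mult_{z_0}\theta=\sum_{i=1}^g\mult_{o_i}\theta(\tau_i,\cdot)=g$, each elliptic theta function having a simple zero. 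Hence $\Sym^g(\calA_1)\subseteq F_{g,0}^{(g)}$.

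For $\subseteq$, suppose $\mult_{z_0}\T\ge g$ for some $z_0$; by Koll\'ar's theorem \cite{kollarbook} we have $\mult_{z_0}\T=g$ exactly. I would argue by induction on $g$ that $(A,\T)\in\Sym^g(\calA_1)$, the case $g=1$ being trivial. The key assertion, call it the \emph{main claim}, is that an indecomposable ppav of dimension $g\ge2$ satisfies $\mult_x\T\le g-1$ for all $x$; equivalently, a multiplicity-$g$ point forces $(A,\T)$ to be decomposable. Granting this, the induction closes via multiplicativity of multiplicity: if $(A,\T)=(A_1,\T_1)\times(A_2,\T_2)$ with $g_i=\dim A_i\ge1$, then $\theta=\theta_1\theta_2$ and, writing $z_0=(z_1,z_2)$, one has $g=\mult_{z_0}\theta=\mult_{z_1}\theta_1+\mult_{z_2}\theta_2\le g_1+g_2=g$, where the inequality is Koll\'ar's bound applied in each factor. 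Equality forces $\mult_{z_i}\T_i=g_i$, so by the inductive hypothesis each $(A_i,\T_i)$ is a product of elliptic curves, and therefore so is $(A,\T)$.

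It thus remains to prove the main claim, and here I would use the finer singularity theory available for \emph{indecomposable} ppav. Since indecomposability is equivalent to irreducibility of $\T$, the theorem of Ein--Lazarsfeld shows that $\T$ is then normal with at worst rational singularities; being a divisor in the smooth variety $A$ it is Gorenstein, so its rational singularities are in fact \emph{canonical}. The desired multiplicity bound now follows from a single discrepancy computation: blowing up $z_0$ gives $\sigma:\tilde A\to A$ with exceptional divisor $E\cong\PP^{g-1}$, $K_{\tilde A}=\sigma^*K_A+(g-1)E$, and $\sigma^*\T=\tilde\T+mE$ with $m:=\mult_{z_0}\T$. Applying adjunction on $\tilde A$ and on $A$ identifies the discrepancy of the induced exceptional divisor over $\T$ as $g-1-m$, and canonicity of $\T$ forces $g-1-m\ge0$, i.e.\ $m\le g-1$. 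This contradicts $\mult_{z_0}\T=g$, so $(A,\T)$ cannot be indecomposable, proving the main claim and completing the induction.

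The main obstacle is precisely this refined singularity input, and it is worth emphasizing why Koll\'ar's log canonicity alone does not suffice. Log canonicity only yields $\mult\le g$ and, at a putative multiplicity-$g$ point, that the log canonical threshold equals $1$; it does \emph{not} by itself impose a product structure, because there exist degree-$g$ tangent cones that are strictly log canonical without being unions of $g$ transverse hyperplanes --- for instance the cone over a smooth plane cubic when $g=3$, which is strictly log canonical but not rational. Ruling out such exotic tangent cones is exactly what the rational/canonical singularities theorem for irreducible theta divisors accomplishes, and this is the technical heart of the argument. I should add that the original proof of Smith and Varley \cite{smva} predates these results and instead analyzes the theta function and its derivatives directly, using the heat equation (\ref{equ:heat}) to convert the high-order vanishing of $\theta$ in the $z$-direction into constraints on the period matrix $\tau$ that split the lattice $\Lambda_\tau$.
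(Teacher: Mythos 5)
Your proof is correct and follows precisely the route the paper itself indicates: right after stating Ein--Lazarsfeld's Corollary~2 (that $F_{k,g-k}^{(g)}$ is the locus of products of at least $k$ ppav), the survey remarks that the case $k=g$ recovers the Smith--Varley theorem, and your argument is a clean self-contained implementation of that implication --- the multiplicity bound $\mult_x\T\le g-1$ for indecomposable ppav via the normality and rational (hence, being Gorenstein, canonical) singularities of the irreducible theta divisor \cite{eila1}, the blow-up discrepancy computation, and the product induction using Koll\'ar's bound in each factor. The one step worth polishing is the discrepancy computation: rather than applying adjunction on the strict transform $\tilde\T$ (whose normality along the exceptional locus you would need to justify), it is cleaner to compute the discrepancy $a(E;A,\T)=g-1-m$ for the pair $(A,\T)$ under the point blow-up and use the standard equivalence, for a normal Gorenstein Cartier divisor in a smooth variety, between $\T$ having canonical singularities and $(A,\T)$ being canonical near $\T$; your closing remark is also right that the original argument of \cite{smva} is different and does not pass through rational singularities.
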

Ein and Lazarsfeld took Koll\'ar's result further and showed:
\begin{thm}[Ein-Lazarsfeld, {\cite[Theorem 1]{eila1}}]
If $(A,\T)$ is an irreducible ppav, then the theta divisor is normal and has rational singularities.
\end{thm}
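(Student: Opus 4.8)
The plan is to reformulate ``normal with rational singularities'' in terms of adjoint ideals and then rule out the bad singularities using generic vanishing on the abelian variety. First I would recall the standard dictionary: for a reduced divisor $D$ on a smooth variety $X$ the adjoint ideal $\op{adj}(X,D)\subseteq\calO_X$ satisfies $\op{adj}(X,D)=\calO_X$ if and only if $D$ is normal and has rational singularities. Since $\Theta\subset A$ is a Cartier divisor in the smooth variety $A$, it is automatically Cohen--Macaulay, so normality will follow once rational singularities are in hand. Thus the whole statement reduces to the single equality $\op{adj}(A,\Theta)=\calO_A$, and Koll\'ar's theorem that $(A,\Theta)$ is log canonical is exactly what guarantees that the relevant jumping occurs at coefficient $1$, so that $\op{adj}(A,\Theta)$ --- rather than a multiplier ideal with a smaller coefficient --- is the right object to analyze.

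Next I would write down the adjoint exact sequence. Let $\nu\colon\tilde\Theta\to\Theta$ be a resolution obtained from a log resolution of the pair $(A,\Theta)$. Because $A$ is an abelian variety we have $K_A=0$, so adjunction gives $\omega_\Theta\cong\calO_A(\Theta)|_\Theta$ and the adjoint sequence takes the clean form
$$
0\to\calO_A\to\calO_A(\Theta)\otimes\op{adj}(A,\Theta)\to\nu_*\omega_{\tilde\Theta}\to0.
$$
The equality $\op{adj}(A,\Theta)=\calO_A$ is then equivalent to the natural map $\nu_*\omega_{\tilde\Theta}\to\omega_\Theta$ being an isomorphism, i.e.\ to $\Theta$ having rational singularities. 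I would now twist the sequence by a general topologically trivial line bundle $P\in\hat A=\Pic^0(A)$. For $P\ne\calO_A$ all cohomology of $P$ on $A$ vanishes, so the sequence yields isomorphisms $H^i\big(A,\calO_A(\Theta)\otimes\op{adj}(A,\Theta)\otimes P\big)\cong H^i\big(A,\nu_*\omega_{\tilde\Theta}\otimes P\big)$ for every $i$, reducing all questions to the cohomology of the canonical sheaf of a resolution of $\Theta$, twisted by $P$.

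The engine of the argument is generic vanishing. The map $\tilde\Theta\to A$ sends $\tilde\Theta$ onto the divisor $\Theta$, so by the Green--Lazarsfeld generic vanishing theorem (in Hacon's formulation for pushforwards of canonical bundles under maps to abelian varieties) the sheaf $\nu_*\omega_{\tilde\Theta}$ is a $GV$-sheaf: its cohomology support loci $V^i=\{P:H^i(A,\nu_*\omega_{\tilde\Theta}\otimes P)\ne0\}$ have codimension at least $i$ in $\hat A$, and by the structure theorem each component of $V^i$ is a translate of an abelian subvariety. Combining this with the computation $h^0\big(A,\calO_A(\Theta)\otimes P\big)=1$ coming from the principal polarization, I would argue by contradiction: if $\op{adj}(A,\Theta)\subsetneq\calO_A$, then the defect is a nonzero quotient supported on the non-rational locus of $\Sing\Theta$, and tracking its contribution through the twisted sequences forces one of the loci $V^i$ to contain a positive-dimensional subtorus $B\subseteq\hat A$.

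The hardest and most delicate step is converting this subtorus into a geometric splitting of $(A,\Theta)$. A positive-dimensional component $B$ of the cohomology support loci should force the defect of the adjoint ideal to be spread out along the dual subtorus, and one must show that for a principal polarization this can only happen when $A$ splits as a nontrivial product $A\cong A_1\times A_2$ with $\Theta$ the associated product divisor $\Theta=p_1^*\Theta_1+p_2^*\Theta_2$. Here the rigidity coming from $h^0(A,\calO_A(\Theta))=1$ is essential: the single theta section leaves no room for the continuous family of sections that a nontrivial subtorus in $V^i$ would otherwise produce, unless the polarization genuinely decomposes. Such a splitting contradicts the hypothesis that $(A,\Theta)$ is irreducible, completing the argument. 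I expect this final implication --- that nontriviality of the adjoint ideal propagates, through generic vanishing, to an honest product decomposition --- to be the main obstacle, since it requires controlling not merely the support but the module structure of the defect sheaf; the ampleness of $\Theta$ and the uniqueness of the theta section are the two features I would rely on to close the gap.
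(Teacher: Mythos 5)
The survey states this theorem without proof, citing \cite[Theorem 1]{eila1}, so your proposal has to be measured against Ein--Lazarsfeld's original argument. You have correctly identified their framework: the reduction to $\op{adj}(A,\Theta)=\calO_A$ via the adjoint exact sequence, the twist by a generic $P\in\Pic^0(A)$, and generic vanishing applied to $\nu_*\omega_{\tilde\Theta}$ are exactly the ingredients they use. (Two small points: Koll\'ar's log canonicity is not an input --- the adjoint ideal of a reduced divisor is the right object unconditionally, and Ein--Lazarsfeld in fact reprove and sharpen Koll\'ar's bound; also rational singularities already imply normality, so the Cohen--Macaulay remark is superfluous.)

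The gap is in the endgame, and it is a real one. From $\op{adj}(A,\Theta)\subseteq\calO_A$ and $h^0(\calO_A(\Theta)\otimes P)=1$ you only obtain, for generic $P$, that $h^0(\calO_A(\Theta)\otimes\op{adj}(A,\Theta)\otimes P)=h^0(\omega_{\tilde\Theta}\otimes\nu^*P)=\chi(\omega_{\tilde\Theta})\in\{0,1\}$; a nontrivial adjoint ideal does \emph{not} by itself force a positive-dimensional component of any $V^i$, so your contradiction scheme never gets started in the case $\chi(\omega_{\tilde\Theta})=1$. That case is closed by an elementary observation your proposal is missing: if the subsheaf $\calO_A(\Theta)\otimes\op{adj}(A,\Theta)\otimes P$ has the same one-dimensional space of sections as $\calO_A(\Theta)\otimes P$, then the unique divisor $\Theta_P\in|\calO_A(\Theta)\otimes P|$, a translate of $\Theta$, must contain the zero scheme $Z$ of the adjoint ideal; letting $P$ vary and using $\bigcap_P\Theta_P=\emptyset$ gives $Z=\emptyset$ directly, with no contradiction argument needed. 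In the remaining case $\chi(\omega_{\tilde\Theta})=0$, the correct use of the Green--Lazarsfeld structure theory is not to extract a product decomposition of $(A,\Theta)$ --- that implication is indeed hard, and it is not what Ein--Lazarsfeld prove here --- but the much weaker statement that $\Theta$, the image of $\tilde\Theta$, is invariant under translation by a positive-dimensional abelian subvariety $B\subseteq A$. For an \emph{irreducible} divisor this already contradicts ampleness, since a $B$-invariant irreducible divisor is pulled back from the quotient $A/B$. This is also precisely where irreducibility enters: for a decomposable ppav each component of $\Theta$ is invariant under its own subtorus and the theorem fails. Replacing your final paragraph by these two steps closes the proof.
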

As an application they obtained:
\begin{thm}[Ein and Lazarsfeld {\cite[Corollary~2]{eila1}}]
The locus $F_{k,g-k}^{(g)}$ is equal to the locus of ppav that are products of (at least) $k$ lower-dimensional ppav.
\end{thm}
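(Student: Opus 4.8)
The plan is to prove the two inclusions separately, the key inputs being Koll\'ar's vanishing $F_{k,g-k+1}^{(g)}=\emptyset$ (which gives $\dim T_k^{(g)}(\tau)\le g-k$ for \emph{every} ppav) and the sharper codimension estimate contained in the Ein-Lazarsfeld theorem quoted above. To lighten notation write $\Sigma_m(\tau):=T_m^{(g)}(\tau)=\{z\in A_\tau\mid\mult_z\theta\ge m\}$, and recall that for a product ppav the theta function factors, $\theta_A(z)=\prod_i\theta_{A_i}(z_i)$, so that multiplicities add: $\mult_z\theta_A=\sum_i\mult_{z_i}\theta_{A_i}$.

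For the inclusion ``$\supseteq$'', suppose $(A,\T)\cong\prod_{i=1}^r(A_i,\T_i)$ with $r\ge k$ and $\dim A_i=g_i$. Fix $k$ of the factors, choose a smooth point of $\T_i$ (multiplicity one) in each of them, and let the remaining coordinates range over the corresponding $A_j$. By additivity of multiplicities the resulting subvariety, isomorphic to $\prod_{i\le k}\T_i\times\prod_{j>k}A_j$, lies in $\Sigma_k(\tau)$ and has dimension $\sum_{i\le k}(g_i-1)+\sum_{j>k}g_j=g-k$. Hence $\dim\Sigma_k(\tau)\ge g-k$, i.e.\ $\tau\in F_{k,g-k}^{(g)}$.

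For ``$\subseteq$'', let $\tau\in F_{k,g-k}^{(g)}$, so $\dim\Sigma_k(\tau)\ge g-k$; combined with Koll\'ar's bound this forces $\dim\Sigma_k(\tau)=g-k$. The Ein-Lazarsfeld theorem provides, for an \emph{irreducible} (equivalently indecomposable) ppav, the estimate $\dim\Sigma_m\le g-m-1$ for all $m\ge2$: the case $m=2$ is exactly the normality of $\T$ (so that $\Sing\T=\Sigma_2$ has codimension $\ge2$ in $\T$), and the higher cases are the remaining part of their codimension estimate. In particular an irreducible ppav has $\dim\Sigma_k<g-k$, so our $(A,\T)$ must be decomposable. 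Write it as a product $(A,\T)\cong\prod_{i=1}^r(A_i,\T_i)$ of indecomposable, hence irreducible, factors; it remains to show $r\ge k$. Additivity of multiplicities gives $\Sigma_k(\tau)=\bigcup_{\mathbf m}\prod_i\Sigma_{m_i}^{(i)}$, the union running over profiles $\mathbf m=(m_1,\dots,m_r)$ with $\sum_i m_i=k$ and $\Sigma_{m_i}^{(i)}\subset A_i$ the multiplicity-$\ge m_i$ locus of $\T_i$. Put $c_i:=g_i-\dim\Sigma_{m_i}^{(i)}$; then $c_i=m_i$ when $m_i\in\{0,1\}$, while $c_i\ge m_i+1$ when $m_i\ge2$ by the irreducible estimate applied to $A_i$. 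Consequently
\[
\dim\prod_i\Sigma_{m_i}^{(i)}=g-\sum_i c_i\le g-\sum_i m_i=g-k,
\]
with equality only if every $m_i\le1$. Since $\Sigma_k(\tau)$ is the union of these products, the value $g-k$ is attained only by a profile with all $m_i\le1$; as $\sum_i m_i=k$ this requires at least $k$ indices with $m_i=1$, and hence $r\ge k$. Thus $(A,\T)$ is a product of at least $k$ lower-dimensional ppav, as claimed.

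The main obstacle is precisely the irreducible codimension estimate $\dim\Sigma_m\le g-m-1$ invoked above. The log-canonicity of $(A,\T)$ alone yields only Koll\'ar's weaker bound $\dim\Sigma_m\le g-m$, and it is the normality and rational singularities of the theta divisor of an irreducible ppav --- Ein-Lazarsfeld's Theorem 1, proved by generic vanishing and multiplier-ideal methods --- that supply the extra unit of codimension separating the decomposable case from the indecomposable one. Once this is granted, the remaining ingredients (the factorization of the theta function over a product and the elementary optimization over multiplicity profiles) are formal.
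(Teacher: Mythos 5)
The paper gives no proof of this statement --- it is a survey, and the theorem is quoted from Ein--Lazarsfeld, presented only as ``an application'' of their Theorem~1 on normality and rational singularities. So your proposal can only be measured against that intended route, and its architecture matches it: the inclusion ``$\supseteq$'' via additivity of multiplicities for a product theta function is correct (and you do not even need smooth points of the $\T_i$, since any point of $\T_i$ has multiplicity $\ge 1$), and the reduction of ``$\subseteq$'' to the indecomposable case by decomposing into indecomposable factors and optimizing over multiplicity profiles is also correct and cleanly done.

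The genuine gap is in the one step you yourself flag as the main obstacle: the claim that an indecomposable ppav satisfies $\dim\Sigma_m\le g-m-1$ for \emph{all} $m\ge 2$. Normality of $\T$ gives exactly the case $m=2$ (the singular locus has codimension $\ge 2$ in $\T$), but it says nothing about $\Sigma_m$ for $m\ge 3$ --- for instance, normality of $\T$ for $g=5$ does not by itself exclude an isolated point of multiplicity $5$. For the higher $m$ you write only that these are ``the remaining part of their codimension estimate,'' but that estimate \emph{is} the statement being proven, restricted to indecomposable ppav, so as written the argument is circular there. The missing idea is the bridge from Theorem~1 to the multiplicity bound: for a reduced divisor $D$ in a smooth variety, ``$D$ is normal with rational singularities'' is equivalent to the triviality of the adjoint ideal $\operatorname{adj}(D)$, equivalently to the pair $(A,\T)$ being canonical; then for any irreducible $Z\subset A$ of codimension $c\ge 2$, the exceptional divisor over $Z$ has discrepancy $c-1-\mult_Z\T\ge 0$, forcing $\codim_A Z\ge\mult_Z\T+1$ and hence $\dim\Sigma_m\le g-m-1$ for every $m\ge 2$. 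With that discrepancy (or multiplier-ideal) computation inserted, your proof is complete; without it, the crucial case $m\ge 3$ is assumed rather than derived.
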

If $k=g$ then this implies the result by Smith and Varley, if $k=2$, then this gives a conjecture of
Arbarello and de Concini from \cite{ardcnovikov}, namely:
\begin{thm}[Ein-Lazarsfeld \cite{eila1}]
$N_{g-2}=\calA_g^{\dec}$.
\end{thm}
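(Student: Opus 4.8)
The cleanest route is to read off the statement from the Ein-Lazarsfeld corollary stated just above, specialized to $k=2$. By definition of the Andreotti-Mayer loci we have $N_{g-2}=F_{2,g-2}^{(g)}$, so it suffices to identify $F_{2,g-2}^{(g)}$ with $\calA_g^{\dec}$. The corollary identifies $F_{k,g-k}^{(g)}$ with the locus of ppav that are products of at least $k$ lower-dimensional ppav; setting $k=2$ exhibits $F_{2,g-2}^{(g)}$ as the locus of ppav admitting a decomposition into at least two lower-dimensional factors, which is exactly $\calA_g^{\dec}$. (A ppav that is a product of at least two lower-dimensional factors is in particular a product of two such, by grouping one factor against the rest, and conversely; so the two loci agree.)

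I would, however, prefer to give the slightly more self-contained argument that uses only the Ein-Lazarsfeld normality theorem, as it makes the geometry transparent and isolates the genuine difficulty. I would prove the two inclusions separately. For $\calA_g^{\dec}\subseteq N_{g-2}$: if $A=A_1\times A_2$ with $\dim A_i=g_i$, then the theta function factors as $\theta_A=\theta_1\cdot\theta_2$, so $\T_A=(\T_1\times A_2)\cup(A_1\times\T_2)$ is singular along the intersection $\T_1\times\T_2$, where both factors and hence the full gradient $\nabla\theta_A=(\nabla\theta_1)\theta_2+\theta_1(\nabla\theta_2)$ vanish. As $\dim(\T_1\times\T_2)=(g_1-1)+(g_2-1)=g-2$, we get $A\in N_{g-2}$.

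For the reverse inclusion $N_{g-2}\subseteq\calA_g^{\dec}$, which is the main obstacle, I would argue by contraposition on the indecomposable locus. Here the point is that the ``irreducible ppav'' of the Ein-Lazarsfeld theorem are precisely the indecomposable ones, i.e.~the complement $\calA_g^{\ind}=\calA_g\setminus\calA_g^{\dec}$ (an indecomposable ppav is exactly one whose theta divisor is irreducible). For such $(A,\T)$ the theorem gives that $\T$ is normal; normality forces $\Sing\T$ to have codimension at least $2$ in the $(g-1)$-dimensional divisor $\T$, so $\dim\Sing\T\le g-3<g-2$. Hence no indecomposable ppav lies in $N_{g-2}$, which is the desired inclusion. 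The entire weight of the proof thus rests on the normality of $\T$ for indecomposable ppav; this is the deep input (proved via the rational-singularities and vanishing techniques of Ein-Lazarsfeld), and everything else is formal.
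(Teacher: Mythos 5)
Your first paragraph is exactly the paper's (one-line) derivation: the survey obtains this theorem by specializing the preceding Ein--Lazarsfeld corollary, $F_{k,g-k}^{(g)}=\{\text{products of at least $k$ factors}\}$, to $k=2$, using $N_{g-2}=F_{2,g-2}^{(g)}$; so on that route you and the paper agree completely. Your second, self-contained argument is a genuine alternative to what the survey writes down (it is closer to how Ein--Lazarsfeld actually deduce the corollary from their normality theorem), and it is correct: the inclusion $\calA_g^{\dec}\subseteq N_{g-2}$ via $\theta_A=\theta_1\cdot\theta_2$ and $\Sing\T_A\supseteq\T_1\times\T_2$ of dimension $g-2$ is right, and the reverse inclusion follows from Serre's criterion (normal implies regular in codimension one), giving $\dim\Sing\T\le g-3$ for indecomposable $(A,\T)$; the identification of ``irreducible ppav'' with indecomposable ones (equivalently, those with irreducible theta divisor) is the standard fact being used. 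What the second route buys is transparency about where the real content sits --- everything reduces formally to the normality of $\T$ for indecomposable ppav --- whereas the paper's route buys brevity by quoting the stronger corollary, which also handles all $k$ at once.
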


In general very little is known about the loci $N_k$, or even about their dimension. The expectation is as follows:
\begin{conj}[Ciliberto-van der Geer \cite{civdg1}, \cite{civdg2}]\label{conj:Nk}
Any \\ component of the locus $N_k$ whose general point corresponds to a ppav with endomorphism ring $\ZZ$ (in particular such a ppav is indecomposable) has codimension at least $(k+1)(k+2)/2$ in $\calA_g$, and the bound is only achieved for the loci of Jacobians and hyperelliptic Jacobians with $k=g-4$ and $k=g-3$ respectively.
\end{conj}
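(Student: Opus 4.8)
The plan is to reduce the global statement to an infinitesimal computation at a general point of a fixed component, and to upgrade the easy bound $\codim N_k\ge k+1$ --- which comes, as explained above, from Debarre's theorem \cite{debarredecomposes} that $S=T_2^{(g)}$ has pure codimension $g+1$ together with the fibre-dimension estimate for $\pi\colon S\to N_k$ --- into the sharp quadratic bound by passing to \emph{second order} via the heat equation. Fix a component $Z\subseteq N_k$ whose general point $[\tau]$ corresponds to $(A,\T)$ with $\op{End}(A)=\ZZ$, and let $\Sigma:=\Sing\T\subset A$, a locus of dimension exactly $k$; choose a general $z_0\in\Sigma$. Writing $V=T_0A\cong\CC^g$, we identify $T_{[\tau]}\calA_g\cong\Sym^2 V$, so that cotangent vectors are quadrics in $\Sym^2 V^\ast$. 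The goal is to exhibit inside the conormal space $N^\ast_{Z,[\tau]}\subset\Sym^2 V^\ast$ a subspace of dimension at least $(k+1)(k+2)/2=\binom{k+2}{2}$.

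The key step is to make $N^\ast_{Z,[\tau]}$ explicit. Along $\Sigma$ both $\theta$ and $\nabla_z\theta$ vanish identically, so for each $z\in\Sigma$ the Hessian $q_z:=\bigl(\partial_{z_i}\partial_{z_j}\theta\bigr)(z)\in\Sym^2 V^\ast$ is a quadric whose kernel contains $T_z\Sigma$; hence $q_z$ has corank $\ge k$. Using the heat equation (\ref{equ:heat}) to trade $\tau$-derivatives of $\theta$ for second $z$-derivatives, and differentiating the singularity conditions once more, one finds that a first-order deformation $\dot\tau\in\Sym^2 V$ stays tangent to $Z$ precisely when the obstruction to propagating the singular locus vanishes, and that this obstruction is governed by the third $z$-jet of $\theta$ along $\Sigma$. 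Concretely this should give
\[
N^\ast_{Z,[\tau]}=\op{span}\bigl\{\,\partial_v q_z \ :\ z\in\Sigma,\ v\in T_z\Sigma\,\bigr\}\subseteq\Sym^2 V^\ast,
\]
the span of the derivatives of the family of tangent-cone quadrics $q_z$ taken along $\Sigma$. The whole problem then becomes the projective-geometric one of bounding this span from below: the naive estimate at the single point $z_0$ yields only $k$, and the remaining dimensions must be extracted from the \emph{variation} of $q_z$ over the $k$-dimensional $\Sigma$. That the bound is genuinely a second-order phenomenon is confirmed by the Jacobian case $k=g-4$, where $q_{z_0}$ has corank exactly $k$ (the tangent cone at a general point of $\Sigma=W^1_{g-1}$ is a rank-$4$ quadric by Riemann--Kempf), yet the quadrics $\partial_v q_z$ are exactly the quadrics through the canonical curve $C\subset\PP(V^\ast)$, which for a Petri-general curve number $\binom{g-2}{2}=\binom{k+2}{2}$: the bound holds with equality, and similarly for hyperelliptic Jacobians at $k=g-3$.

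The hypothesis $\op{End}(A)=\ZZ$ is what should force the family $\{q_z\}$ to be nondegenerate enough for the span to attain $\binom{k+2}{2}$: for decomposable ppav the singular locus acquires a product structure and the $q_z$ collapse, which is exactly why the decomposable locus $\calA_g^{\dec}=N_{g-2}$ (Ein--Lazarsfeld \cite{eila1}) has components of codimension $i(g-i)<\binom{g}{2}$ and must be excluded. Any proof must therefore rule out, for a component with general endomorphism ring $\ZZ$, the degenerations of the second fundamental form of $\Sigma$ that would drop the rank of the evaluation map $(z,v)\mapsto\partial_v q_z$ below the expected value. The rigidity clause then demands the converse: one must show that equality in the span estimate forces $\Sigma$ to be a Brill--Noether locus of a curve, so that the extremal components are precisely $\calJ_g$ at $k=g-4$ and $\Hyp_g$ at $k=g-3$, in agreement with the Andreotti--Mayer theorem \cite{anma} that these are components of $N_{g-4}$ and $N_{g-3}$.

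The hard part is the lower bound on $\dim N^\ast_{Z,[\tau]}$ in the absence of any curve to organize the quadrics: for an arbitrary indecomposable ppav one has essentially no control over the infinitesimal variation of the tangent cones $q_z$, and degenerate second fundamental forms are difficult to exclude. This is why the conjecture is open even in the first nontrivial case $k=1$, where the argument above would have to improve Mumford's $\codim N_1\ge 2$ \cite{mumforddimag} to the predicted $\codim N_1\ge 3$. I would attack the estimate by combining the determinantal stratification of $\Sym^2 V^\ast$ by corank with the Kodaira--Spencer/Gauss data of the family of tangent cones, hoping that $\op{End}(A)=\ZZ$ supplies the transversality needed; but making this unconditional, and proving the accompanying Torelli-type rigidity, is exactly the obstacle that has kept Conjecture \ref{conj:Nk} open.
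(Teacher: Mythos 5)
The first thing to say is that the statement you were asked to prove is, in this paper, precisely a conjecture: the authors record it as Conjecture \ref{conj:Nk} of Ciliberto and van der Geer and state explicitly that, apart from the case $k=1$ (which Ciliberto and van der Geer themselves proved in \cite{civdg2}) and the weaker codimension bounds $k+2$, resp.\ $k+3$, from \cite{civdg1}, the full statement remains wide open. There is therefore no proof in the paper to compare yours against, and your text --- candidly --- is not a proof either: you set up an infinitesimal framework and then state that the decisive lower bound on the span of the tangent-cone quadrics ``is exactly the obstacle that has kept Conjecture \ref{conj:Nk} open.'' A proposal that defers the only nontrivial step cannot be assessed as a proof. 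Note also a factual slip: you write that the conjecture is open even in the first nontrivial case $k=1$, whereas the paper attributes a proof of the $k=1$ case to \cite{civdg2}; what your strategy would actually have to improve on is the unconditional bound $\codim N_k\ge k+2$ already known for all $k$.

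That said, the framework you describe is the right one and is essentially the one underlying the known partial results, but with one identification that looks wrong. The standard first-order computation via the heat equation (\ref{equ:heat}) shows that the conormal space to a component $Z$ of $N_k$ at a general point contains the span of the Hessian quadrics $q_z$ themselves, for $z$ ranging over the $k$-dimensional locus $\Sigma$ --- not, as you write, the span of their derivatives $\partial_v q_z$ along $\Sigma$ with the $q_z$ omitted. Your own consistency check in the Jacobian case betrays this: for a general curve the quadrics cutting out the canonical curve arise as the tangent cones $q_z$ at points of $W^1_{g-1}$, not as derivatives of those tangent cones. Infinitesimally the span of $\{q_z\}_{z\in\Sigma}$ contains $q_{z_0}$ together with its derivatives along $\Sigma$, so the space you describe is (roughly) a subspace of the correct one; asserting equality and then trying to bound the smaller space from below makes the already-open estimate strictly harder. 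The genuine gap remains the one you yourself identify: without a curve, or some replacement supplied by the hypothesis $\op{End}(A)=\ZZ$, there is no known mechanism forcing the family of quadrics $q_z$ to span a space of dimension $(k+1)(k+2)/2$, nor a rigidity argument characterizing the equality cases; this is exactly why the statement is recorded in the paper as a conjecture rather than a theorem.
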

Ciliberto and van der Geer  prove this conjecture in \cite{civdg2} for $k=1$, and in \cite{civdg1} they
obtain a bound of $k+2$ (or $k+3$ for $k>g/3$) for the codimension of $N_k$, but the full statement remains wide open.

Many results about the Andreotti-Mayer loci are known in low genus; in particular it is known that this approach does not give a complete solution to the Schottky problem: already in genus 4 we have
\begin{thm}[\cite{beauville}]
In genus 4 we have $N_0^{(4)}=\calJ_4\cup\tn$. The locus $N_1^{(4)}$ is irreducible, more
precisely $N_1^{(4)}=\Hyp_4$.
\end{thm}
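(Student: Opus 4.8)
The plan is to prove the two assertions separately, leaning on the scheme-theoretic identity $N_0=\tn+2N_0'$ with $\tn=\pi(S_{\rm null})$ and $N_0'=\pi(S')$, and on the two Andreotti--Mayer inclusions specialised to $g=4$. For the first statement it suffices to identify the non-theta-null part $N_0'$, since $\tn$ is by construction an irreducible divisor inside $N_0$. By the Andreotti--Mayer theorem with $g-4=0$ the locus $\calJ_4$ is an irreducible component of $N_0^{(4)}$; and for a generic (non-hyperelliptic) genus $4$ curve the Riemann singularity theorem identifies $\Sing\Theta$ with $W^1_3$, the two $g^1_3$'s cut out by the rulings of the smooth quadric through the canonical model, which are finitely many non-$2$-torsion points. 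Hence $\calJ_4\subseteq N_0'$ and $\calJ_4\not\subseteq\tn$.

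To rule out further components of $N_0'$ I would pass to divisor classes in $\Pic(\calA_4)_\QQ=\QQ\lambda$. On one side $\calJ_4$ is cut out by the Schottky modular form, of weight $8$, so $[\calJ_4]=8\lambda$. On the other side Mumford's computation of the classes of $N_0$ and of $\tn$ gives, via $[N_0']=\frac{1}{2}\bigl([N_0]-[\tn]\bigr)$, that $[N_0']=8\lambda$ as well. Since $N_0'$ is an effective divisor containing the irreducible divisor $\calJ_4$ and the two have the same class, they must coincide, so $N_0'=\calJ_4$ and $N_0^{(4)}=\calJ_4\cup\tn$. The decomposable locus is consistent with this picture: a product of positive-dimensional ppav always carries an even characteristic of type (odd, odd) whose theta constant vanishes identically, so $\calA_4^{\dec}\subseteq\tn$.

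For the second statement I would first cut down the ambient space. Since $N_1\subseteq N_0=\calJ_4\cup\tn$, every component of $N_1^{(4)}$ lies in $\calJ_4$ or in $\tn$. The Jacobian part is handled by Martens' theorem as quoted: for $k=2$ and $g=4$ it reads $N_1\cap\calJ_4=F_{2,g-3}\cap\calJ_4=\Hyp_4$, and by the Andreotti--Mayer theorem (with $g-3=1$) $\Hyp_4$ is indeed a component of $N_1$; moreover a hyperelliptic Jacobian has a vanishing theta-null, so $\Hyp_4\subseteq\tn$ too. By Ein--Lazarsfeld the theta divisor of an indecomposable ppav is normal, whence for $g=4$ one has $\dim\Sing\Theta\le(g-1)-2=1$; thus on the indecomposable locus membership in $N_1$ means the singular locus is a curve. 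The decomposable contribution $\calA_4^{\dec}=N_2\subseteq N_1$ lies in $\tn$ and is described by Ein--Lazarsfeld, so the genuine content of the irreducibility claim is that $\Hyp_4$ is the only component of $N_1^{(4)}$ meeting $\calA_4^{\ind}$.

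It therefore remains to show that there is no indecomposable, non-Jacobian ppav carrying a vanishing theta-null whose theta divisor has a one-dimensional singular locus; equivalently, that every component of $N_1$ contained in $\tn$ but not in $\calJ_4$ is decomposable. This is the principal obstacle. A dimension count alone does not settle it: $\dim\tn=9$ while $\dim\Hyp_4=7$, and the general bound $\codim_{\calA_4}N_1\ge2$ still permits a component of dimension $8$, that is of codimension $1$ in $\tn$, lying off $\calJ_4$; excluding any such component is exactly what is needed. I expect the decisive input to be genus-$4$-specific geometry beyond the results quoted above: the dominant Prym map $\calR_5\to\calA_4$, under which the singularities of the Prym theta divisor are governed by the geometry of the associated double cover, together with the Schottky--Jung relations tying vanishing theta-nulls to the Prym/Schottky locus. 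Identifying exactly which covers produce a one-dimensional singular locus, and verifying that these yield precisely the hyperelliptic Jacobians, is where the real work lies.
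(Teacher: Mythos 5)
The paper offers no proof of this theorem; it is quoted from Beauville, so your attempt can only be measured against the argument in the literature. Your treatment of the first assertion is essentially sound and follows a genuinely different (and slicker) route than Beauville's: granting Mumford's classes $[N_0]=84\lambda$ and $[\tn]=68\lambda$, hence $[N_0']=8\lambda$, together with $\calJ_4\subseteq N_0'$ and $\calJ_4\not\subseteq\tn$, the identity $N_0'=\calJ_4$ does follow --- but only because you also use that $[\calJ_4]=8\lambda$ \emph{exactly}. That input is Igusa's theorem that the weight-$8$ Schottky form cuts out $\calJ_4$ with multiplicity one and vanishes nowhere else; merely knowing that the Schottky form vanishes on $\calJ_4$ gives $8\lambda=a[\calJ_4]+[E]$ with $a\ge1$ and $E\ge0$, which does not close the argument. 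You should cite that result explicitly and note that it is a substantial theorem in its own right (Beauville's original proof instead went through Prym varieties, Igusa's theorem not being available in 1977), so the logical order of the two results needs care.

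The second assertion is where the genuine gap lies, and you say so yourself. Your reductions are correct: Martens plus Andreotti--Mayer dispose of the components inside $\calJ_4$, Ein--Lazarsfeld identifies $N_2=\calA_4^{\dec}$, and you rightly observe that the statement must be read on the indecomposable locus (for instance $\calA_1\times\calA_3$ is a $7$-dimensional piece of $N_1^{(4)}$ not contained in $\overline{\Hyp_4}$). But the heart of the theorem --- that no indecomposable, non-Jacobian component of $N_1^{(4)}$ hides inside $\tn$ --- is precisely the step you defer, and it is not a routine verification or a dimension count. It is the main content of Beauville's argument: every $4$-dimensional ppav is realized as the Prym variety of an \'etale double cover $\tilde C\to C$ of a genus-$5$ curve; Mumford's description identifies $\Sing\Xi$ with line bundles $L$ on $\tilde C$ satisfying $\operatorname{Nm}(L)=\omega_C$ and $h^0(L)\ge 2$ even; and one must then classify which covers yield a positive-dimensional family of such $L$ and check that, away from the decomposable and degenerate cases, they produce exactly the hyperelliptic Jacobians. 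Without carrying out that classification (or supplying a substitute), the proof of $N_1^{(4)}=\Hyp_4$ is incomplete.
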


The situation is also very well understood in genus $5$, see \cite[Table 2]{cmsurvey}. The varieties
$F_{l,k}^{(5)}$ are empty for $l+k > 5$. If $l+k=5$ then $F_{l,k}$ parameterizes products of $k$ ppav.
Moreover we had already seen that $F_{2,0}=N_0=\theta_{\operatorname{null}} + 2N_0'$ is a divisor.
To describe the remaining cases, we introduce notation: for $i_1 + \ldots + i_r=g$ we denote by ${\mathcal A}_{i_1, \cdots ,i_r}\subset \calA_g$ the substack that is the image of the direct product ${\mathcal A}_{i_1} \times \cdots \times{\mathcal A}_{i_r}$. We also denote
$\Hyp_{i_1, \cdots ,i_r}:= \Hyp_g \cap {\mathcal A}_{i_1, \cdots ,i_r}$.
\begin{prop}
In genus 5, the generalized Andreotti-Mayer loci are as follows:
\begin{itemize}
\item[(i)] $F_{0,4}^{(5)}={\mathcal A}_{1,1} \times \Hyp_3$,
$F_{1,3}^{(5)}=\Hyp_{1,4} \cup \Hyp_{2,3} \cup  {\mathcal A}_{1,1,3}$, $F_{2,2}^{(5)}=\Hyp_5 \cup {\mathcal A}_{1,4}
\cup {\mathcal A}_{2,3}$.
\item[(ii)] $F_{0,3}^{(5)}= \overline{IJ}  \cup ({\mathcal A}_1 \times \theta^4_{\operatorname{null}}) \cup \Hyp_{1,4}$,
$F_{1,2}^{(5)}= {\mathcal J}_5 \cup {\mathcal A}_{1,4} \cup A \cup B \cup C$, where $\overline{IJ}$ denotes the closure in $\calA_5$ of the locus of intermediate Jacobians of cubic threefolds, the component $A$ has dimension
10, and the components $B$ and $C$ have dimension 9.
\end{itemize}
\end{prop}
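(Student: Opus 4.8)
The plan is to describe each locus intrinsically and then treat the decomposable and indecomposable ppav separately. Intrinsically, $F_{2,2}^{(5)}=N_2^{(5)}$ and $F_{1,2}^{(5)}=N_1^{(5)}$ are the loci where $\Sing\T$ has dimension $\ge 2$, respectively $\ge 1$; $F_{1,3}^{(5)}$ is the locus where the multiplicity-$\ge 3$ points of $\T$ form a subvariety of positive dimension; and $F_{0,4}^{(5)}$, $F_{0,3}^{(5)}$ are the loci of ppav whose theta divisor has a point of multiplicity $\ge 4$, respectively $\ge 3$. I would use three tools throughout: the multiplicity-addition formula $\mult_{(p_1,\dots,p_r)}\theta=\sum_i\mult_{p_i}\theta_i$ on a product $\prod_i(A_i,\T_i)$, which reduces every decomposable contribution to genus $\le 4$; the Riemann-Kempf singularity theorem, which identifies $T_a^{(5)}\cap\calJ_5$ with the Brill-Noether locus $W^{a-1}_{4}$ and thereby converts the dimension and multiplicity conditions into Brill-Noether conditions; and the global bounds already recorded above, namely Koll\'ar's inequality $\dim T_k^{(g)}\le g-k$, Smith-Varley, and Ein-Lazarsfeld (so that the extremal strata $l+k=5$ are pure products and everything here lives one step inside that bound).

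First I would enumerate the decomposable strata. Feeding the genus $\le 4$ data into the addition formula, an elliptic factor contributes multiplicity $1$ at its origin, an indecomposable genus-$2$ or generic genus-$3$ factor contributes $1$, a hyperelliptic genus-$3$ factor contributes $2$ at its $g^1_2$, and a genus-$4$ factor in $\theta^4_{\operatorname{null}}$ contributes $2$ at an even $2$-torsion point; here Beauville's genus-$4$ description $N_0^{(4)}=\calJ_4\cup\tn$ supplies exactly which genus-$4$ factors carry a double point. Matching the required multiplicity and dimension then singles out the decomposable candidates appearing in the statement: $\calA_{1,4}$ and $\calA_{2,3}$ for $N_2$; $\calA_{1,1,3}$ for $F_{1,3}^{(5)}$; $\calA_{1,1}\times\Hyp_3$ for $F_{0,4}^{(5)}$; $\calA_1\times\theta^4_{\operatorname{null}}$ and $\Hyp_{1,4}$ for $F_{0,3}^{(5)}$; and $\calA_{1,4}$ for $N_1$. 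I would then collapse the finer partitions into the closures of these maximal strata, using that a product of (possibly hyperelliptic) lower-genus Jacobians appears as a limit of a single smooth hyperelliptic family through admissible covers; for instance $\calA_{1,1,1,2}\subset\overline{\calA_{1,1}\times\Hyp_3}$.

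On the indecomposable side I would argue in two stages. For Jacobians, Riemann-Kempf turns each condition into a Brill-Noether one: $\dim\Sing\T=\dim W^1_4\ge 2$ forces equality in Martens' bound, hence hyperellipticity (Mumford's refinement), giving $\Hyp_5\subset N_2$; the existence of a triple point means $W^2_4\ne\emptyset$, i.e.\ a $g^2_4$, which by Clifford's theorem again forces hyperellipticity; and the generic genus-$5$ Jacobian has $\dim\Sing\T=g-4=1$, so $\calJ_5$ is a component of $N_1$. For non-Jacobian indecomposable ppav the decisive new input is the intermediate Jacobian of a cubic threefold, whose theta divisor has a \emph{unique} singular point, of multiplicity $3$; this yields the $10$-dimensional component $\overline{IJ}$ of $F_{0,3}^{(5)}$. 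The remaining components $A,B,C$ of $N_1=F_{1,2}^{(5)}$, of dimensions $10,9,9$, are those produced by the Ciliberto-van der Geer analysis of $N_1$ in genus $5$.

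Finally I would assemble the components by comparing dimensions and checking incidences, the two crucial containments being that the finer products lie in the closures described above, and that $\Hyp_5\subset\overline{IJ}$ --- the latter because hyperelliptic genus-$5$ Jacobians do carry a triple point (at $2\,g^1_2$) and so must be accounted for in $F_{0,3}^{(5)}$, yet they arise as intermediate Jacobians of suitably degenerate (chordal) cubic threefolds and hence contribute no separate component. The hard part will be \emph{completeness}: showing that the listed loci exhaust the components, i.e.\ that no unanticipated product type and, above all, no unexpected \emph{indecomposable} ppav meets the prescribed singularity condition. This is genuinely difficult precisely because it concerns the singularities of $\T$ at an \emph{arbitrary} point of $A$ rather than at the origin or a $2$-torsion point --- the difficulty flagged earlier in the paper. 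For $F_{1,2}^{(5)}$ it is the substance of the Ciliberto-van der Geer theorem, resting on their codimension bound $\codim_{\calA_g}N_1\ge 2$ together with a careful local analysis via the heat equation, and it also relies on the degeneration statement $\Hyp_5\subset\overline{IJ}$, neither of which is formal.
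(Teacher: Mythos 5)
The paper offers no proof of this proposition: it is quoted from Casalaina-Martin's survey (Table 2), with $F_{0,3}^{(5)}$ attributed to Casalaina-Martin--Laza and $F_{1,2}^{(5)}$ to Donagi and to Debarre, the components $A,B,C$ being described in terms of Prym varieties. Your reading of the indices (first index a dimension bound, second a multiplicity, i.e.\ swapped relative to the definition of $F_{a,b}$ given earlier in the paper) is correct, and your skeleton --- additivity of multiplicities on products, Riemann--Kempf plus Martens and Clifford on $\calJ_5$, then a residual analysis of indecomposable non-Jacobians --- is the right one; the absorption statements, including $\calA_{1,1,1,2}\subset\overline{\calA_{1,1}\times\Hyp_3}$ and $\Hyp_5\subset\overline{IJ}$ (Collino's chordal-cubic degeneration), are also correct.

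There are, however, two genuine gaps. First, for the step you rightly single out as the crux --- completeness of the component list --- you point at the wrong tool. Ciliberto--van der Geer prove a \emph{codimension bound} for components of $N_1$ whose generic point has endomorphism ring $\ZZ$ (namely $\codim\ge 3$; the bound $\ge 2$ you quote is Mumford's), and a codimension bound can neither exclude unexpected components nor identify $A$, $B$, $C$. The actual determination of $N_1^{(5)}$ is due to Donagi and to Debarre, and the completeness of $F_{0,3}^{(5)}$ and $F_{2,2}^{(5)}$ on the indecomposable non-Jacobian side to Casalaina-Martin--Friedman, Casalaina-Martin--Laza and Debarre; all of these rest on the one technique your outline never mentions, the dominance of the Prym map onto $\calA_5$ together with the Prym--Riemann singularity theorem and the classification of stable versus exceptional singularities of Prym theta divisors, which is precisely what converts ``$\T$ is singular at an arbitrary point'' into tractable curve theory. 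Without this your plan has no mechanism for the hardest case. Second, your enumeration of decomposable contributions to $F_{0,3}^{(5)}$ is not carried through consistently: you correctly record that $N_0^{(4)}=\calJ_4\cup\tn$, so \emph{every} genus-$4$ Jacobian factor contributes a double point at its $g^1_3$, and hence $\calA_1\times\calJ_4$ --- a ten-dimensional locus of products with a triple point at $(p,g^1_3)$, generically not a $2$-torsion point --- lies in $F_{0,3}^{(5)}$; yet you list only $\calA_1\times\tn^{(4)}$ and $\Hyp_{1,4}$ as candidates. Since $\calA_1\times\calJ_4$ cannot be contained in $\overline{IJ}$, in $\calA_1\times\tn^{(4)}$, or in $\Hyp_{1,4}$ for dimension and irreducibility reasons, you must explicitly reconcile this locus with the stated decomposition before your enumeration can be called complete; as written, your own recipe produces a component that the statement you are proving does not list.
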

The most interesting cases here are those of $F_{0,3}$, which goes back to Casalaina-Martin and Laza \cite{cmla},
and $F_{1,2}$, which is due to Donagi \cite[Theorem 4.15]{dosurvey} and
Debarre \cite[Proposition 8.2]{debarrecodim3}.
The components $A$,$B$ and $C$ can be described explicitly in terms of Prym varieties.
The above results for genus 4 and 5 led to the following folk conjecture, motivated, to the best of our knowledge, purely by the situation in low genus:
\begin{conj}
\begin{enumerate}
\item All irreducible components of $N_{g-4}$ except the locus of Jacobians $\calJ_g$ are contained in the theta-null divisor $\tn$.
\item The equality $N_{g-3}=\Hyp_g\cup\calA_g^{\dec}$ holds (recalling $\calA_g^{\dec}=N_{g-2}$, this is equivalent to $N_{g-3} \setminus N_{g-2}=\Hyp_g$).
\end{enumerate}
\end{conj}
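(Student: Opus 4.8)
We outline a strategy based on induction on the genus $g$, using degeneration to the boundary of a toroidal compactification, which is the technique emphasized throughout this survey. The first step is to strip off the decomposable locus: by the Ein--Lazarsfeld description $N_{g-2}=\calA_g^{\dec}$ (\cite{eila1}) it suffices to analyze the indecomposable components, and for an indecomposable ppav the theta divisor is normal with rational singularities (\cite{eila1}), so that $\dim\Sing\Theta\le g-3$. With this reformulation, part (2) asserts that the indecomposable ppav attaining the \emph{maximal} value $\dim\Sing\Theta=g-3$ are precisely the hyperelliptic Jacobians, while part (1) asserts that, away from $\calJ_g$, the locus where $\dim\Sing\Theta\ge g-4$ forces a singularity at an even $2$-torsion point, i.e.\ lands in $\tn$. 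The base cases $g=4,5$ are exactly the low-genus computations recalled above, so the inductive engine only needs to produce the statement in genus $g$ from its analogue in genus $g-1$.

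For the inductive step I would work in the partial compactification $\Part$ whose boundary is (birationally) the universal family $\calX_{g-1}$ over $\calA_{g-1}$, and for a putative extra component $Z$ of $N_{g-3}\setminus N_{g-2}$ (resp.\ of $N_{g-4}$) I would study the intersection of $\overline{Z}$ with this boundary divisor. The key computation is the limit of the Riemann theta function under a rank-one (semiabelic) degeneration: the limiting theta divisor on the semiabelian variety is assembled from translates of the theta divisor of the $(g-1)$-dimensional ppav $B$, and its singular locus, together with its dimension, is governed by the Andreotti--Mayer data of $B$ and by the position of the extension point on $B$. Tracking how the $(g-3)$-dimensional (resp.\ $(g-4)$-dimensional) singular locus specializes, one reads off a constraint placing the boundary of $\overline{Z}$ inside the genus $g-1$ loci controlled by the inductive hypothesis, namely $\overline{\Hyp_{g-1}}$, $\overline{\calJ_{g-1}}$, or $\overline{\tn}$.

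To close the induction I would exploit the compatibility of the distinguished loci with this degeneration: $\overline{\Hyp_g}$, $\overline{\calJ_g}$ and $\overline{\tn}$ each meet the boundary divisor in precisely the corresponding genus $g-1$ locus (hyperelliptic curves degenerate to hyperelliptic or reducible curves, Jacobians to Jacobians of semistable curves, and vanishing theta-nulls persist to the boundary). Since an irreducible component of $N_{g-3}$ not contained in $\calA_g^{\dec}$ typically meets this boundary divisor and is determined by its generic behaviour there, matching the boundary of $\overline{Z}$ with the boundary of $\overline{\Hyp_g}$ forces $Z=\Hyp_g$. For part (1) the same scheme applies, but one must in addition follow the splitting $S=S_{\rm null}\cup S'$ of Debarre (\cite{debarredecomposes}): the component $S'$ accounts for the Jacobian locus, whereas any other component carries its excess singularities at even $2$-torsion points, i.e.\ sits over $S_{\rm null}$ and hence in $\tn$.

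The principal obstacle is exactly the difficulty flagged earlier in this section: there are very few techniques for controlling singularities of $\Theta$ at an \emph{arbitrary} (non-$2$-torsion) point, which is what both parts ultimately require away from $\tn$. Concretely, three points resist the sketch above. First, ruling out ``exotic'' components appearing for the first time in high genus --- components that are neither hyperelliptic nor decomposable nor contained in $\tn$ --- is essentially equivalent to the codimension bound of the open Ciliberto--van der Geer Conjecture~\ref{conj:Nk}, so the degeneration argument alone cannot be expected to suffice without new input on dimensions. Second, the limit analysis must be sharp enough to guarantee that the singular locus does not drop dimension in the semiabelic limit and to control excess intersection and multiplicity, which requires a precise understanding of the extension of the universal theta divisor to $\Part$. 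Third, a component might fail to meet the boundary outside $\calA_{g-1}^{\dec}$, or meet it non-transversally, and such components would have to be excluded by an independent dimension estimate. Decoupling the structural characterization from the codimension conjecture is where I expect the real work to lie.
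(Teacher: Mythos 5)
This statement appears in the paper as an open \emph{folk conjecture}, explicitly described as ``motivated, to the best of our knowledge, purely by the situation in low genus''; the paper offers no proof of it, so there is no argument of the authors' to compare yours against. What you have written is, by your own admission in the final paragraph, a strategy sketch rather than a proof, and the obstacles you flag are precisely why the statement is still a conjecture. Concretely: (a) ruling out components of $N_{g-3}$ or $N_{g-4}$ that are neither hyperelliptic, nor decomposable, nor contained in $\tn$ requires dimension control that, as you say, is essentially Conjecture~\ref{conj:Nk} of Ciliberto--van der Geer, known only for $k=1$; (b) nothing guarantees that such a putative component meets the boundary of $\calA_g'$, nor that it is determined by its boundary behaviour; and (c) the dimension of $\Sing\Theta$ can jump in a semiabelic limit, and controlling this is exactly the detailed stratum-by-stratum analysis that the paper reports has been carried out only for small torus rank. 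Your appeal to Debarre's decomposition $S=S_{\rm null}\cup S'$ for part (1) also does not do what you want: $S'$ is a single irreducible component of $T_2^{(g)}$ of codimension $g+1$ in $\calX_g$, and a non-Jacobian component of $N_{g-4}$ could perfectly well lie under $\pi(S')$; irreducibility of $S'$ upstairs does not force excess-dimensional singular loci to sit over $S_{\rm null}$.

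There is also a structural problem with the induction as you set it up. A rank-one degeneration expresses the limiting theta divisor in terms of the theta divisor of a $(g-1)$-dimensional ppav $B$ \emph{and an extension point} $b\in B$; the singular locus of the limit is governed by intersections of $\Theta_B$ with its translate by $b$, i.e.\ by loci in the universal family $\calX_{g-1}$, not by the Andreotti--Mayer loci of $\calA_{g-1}$ alone. The inductive hypothesis (the conjecture in genus $g-1$) says nothing about these vertical loci, so the induction does not literally close even granting the dimension estimates. Finally, the base of the induction is not fully available in the survey: both parts are verified for $g=4$, but for $g=5$ the paper records $N_2^{(5)}=F_{2,2}^{(5)}$ while a complete component decomposition of $N_1^{(5)}$ is not given, so part (1) at $g=5$ is itself not established there. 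In short, your plan is aligned with the degeneration philosophy the paper advocates, but it is not a proof, and the statement remains open.
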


Moreover, one sees that in the locus of indecomposable ppav the maximal possible multiplicity of the theta divisor is at most $(g+1)/2$, and we are led to the following folk conjecture:
\begin{conj}
$T^{(g)}_{\lfloor \frac{g+3}{2}\rfloor}\subset\calX_g^{dec}$.
\end{conj}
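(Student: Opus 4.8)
The plan is to prove the equivalent contrapositive: every indecomposable ppav $(A,\T)$ satisfies $\mult_z\theta(\tau,z)\le\lfloor\frac{g+1}{2}\rfloor$ at every point $z$. Since $\lfloor\frac{g+3}{2}\rfloor=\lfloor\frac{g+1}{2}\rfloor+1$, this says precisely that a point of multiplicity $\lfloor\frac{g+3}{2}\rfloor$ can occur only over $\calA_g^{\dec}$. The first thing I would stress is what the number $\lfloor\frac{g+1}{2}\rfloor$ really is: on the locus of Jacobians, where $\mult_z\T=h^0(C,L)$ for the degree $g-1$ line bundle $L$ corresponding to $z$ (Riemann--Kempf, as in the Remark above), the inequality $h^0(L)\le\frac{g-1}{2}+1=\frac{g+1}{2}$ is exactly Clifford's theorem, with equality forcing $C$ hyperelliptic. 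Thus the conjecture is nothing but a \emph{Clifford inequality for all indecomposable ppav}, and the extremal case $\Hyp_g$ recorded in the Proposition above shows that any proof must be sharp.

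The structural input I would build on is the theorem of Ein and Lazarsfeld \cite{eila1}: for an indecomposable (hence irreducible) ppav the divisor $\T$ is normal with rational singularities. As $\T$ is Gorenstein (a divisor with $K_A=0$, so $K_\T=\T|_\T$), rational means canonical, and by inversion of adjunction the pair $(A,\T)$ is plt, so the only log canonical centre is $\T$ itself and $\dim\Sing\T\le g-3$ (equivalently, an indecomposable ppav lies outside $N_{g-2}=\calA_g^{\dec}$). Kollár's log canonicity \cite{kollarbook} already gives $\mult_z\T\le g$, and blowing up an isolated multiplicity $m$ point improves this to $m\le g-1$ under plt-ness: the exceptional divisor has discrepancy $(g-1)-m$, which must exceed $-1$ because its centre is a proper subvariety of $\T$. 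The whole difficulty is to pass from $g-1$ to $\lfloor\frac{g+1}{2}\rfloor$, i.e.\ to gain the factor of two that is the hallmark of Clifford's theorem and that no purely local singularity argument can supply, since a general canonical hypersurface of dimension $g-1$ may have multiplicity as large as $g-1$.

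To produce that factor I would attack the problem cohomologically, replacing Clifford's missing curve by the geometry of $A$ itself. Concretely, a point $z$ of multiplicity $m$ gives, through the multiplier ideal $\calJ(A,c\,\T)$ (or simply the ideal of the $(m-1)$st infinitesimal neighbourhood of $z$), a sheaf whose twist by $\calO_A(\T)$ is controlled by generic vanishing theory (Green--Lazarsfeld, Hacon, Pareschi--Popa $M$-regularity) and the Fourier--Mukai transform. The goal is an inequality bounding $m$ against $\frac{1}{2}\dim A$ via the Euler characteristic $\chi(\calO_A(\T))=1$, in exact parallel with the cohomological proof of Clifford on a curve. The heat equation (\ref{equ:heat}) is what keeps such an argument local-to-global, exactly as in Mumford's proof that $\codim_{\calA_g}N_1\ge2$: it converts each pair of $z$-derivatives into a single $\tau$-derivative, so that $\mult_z\theta\ge m$ forces every mixed derivative $\partial_z^\alpha\partial_\tau^\beta\theta$ with $|\alpha|+2|\beta|\le m-1$ to vanish at $(\tau,z)$, encoding the ``excess'' half of the multiplicity as vanishing in the moduli directions.

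The hard part will be establishing the sharp cohomological inequality uniformly in $g$ and then analysing the equality case. Without a curve one cannot quote Clifford, so one must prove the bound intrinsically on $A$; the delicate points are controlling the singular locus of the tangent cone of $\T$ at $z$ (where spurious divisorial valuations of small discrepancy could hide, to be excluded using $\dim\Sing\T\le g-3$ and, inductively, the Ein--Lazarsfeld description of $F_{k,g-k}^{(g)}$ as products of $\ge k$ ppav) and characterising when equality $m=\lfloor\frac{g+1}{2}\rfloor$ is attained. For the latter I would degenerate to the boundary of a toroidal compactification, using the techniques of Section~\ref{sec:degen} and induction on $g$ to show that any limit attaining the bound lies over $\calA_g^{\dec}$, which is what the inclusion $T^{(g)}_{\lfloor\frac{g+3}{2}\rfloor}\subset\calX_g^{\dec}$ asserts.
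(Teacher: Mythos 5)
There is a genuine gap, and it is the decisive one. The statement you are asked about is a \emph{conjecture}: it is open in general, and the paper does not prove it beyond $g\le 5$. Your reformulation as a Clifford-type bound $\mult_z\theta\le\lfloor\frac{g+1}{2}\rfloor$ for indecomposable ppav is exactly right, and your assembly of the known partial results is essentially correct (Koll\'ar's log canonicity gives $\mult_z\theta\le g$; Ein--Lazarsfeld's normality and rational singularities of $\T$ for irreducible ppav, via inversion of adjunction, upgrade this to $\mult_z\theta\le g-1$). But the entire content of the conjecture is the passage from $g-1$ to $\lfloor\frac{g+1}{2}\rfloor$, and at that point your proposal says only that one should find a ``sharp cohomological inequality'' via multiplier ideals, generic vanishing and the Fourier--Mukai transform, and that ``the hard part will be establishing'' it. No such inequality is known; as you yourself observe, no purely local singularity argument can produce the factor of two, and the global input that replaces Clifford's curve has not been identified by anyone. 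A proposal whose key step is ``prove the conjecture'' is not a proof.

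For comparison, the paper's justification of the cases that \emph{are} known ($g\le 5$) is entirely different and much more concrete: for $g\le 3$ every indecomposable ppav is a Jacobian, where the bound is the Riemann theta singularity theorem plus Clifford/Martens; for $g\le 5$ one uses that the Prym map is dominant onto $\calA_g$, so that every indecomposable ppav is a Prym variety or a degeneration of one, and the bound follows from the Prym--Riemann theta singularity theorem together with an analysis of the degenerate Pryms in $\calA_g^{\ind}$ (see \cite{casalaina}, \cite{casalaina2}, \cite{cmfr}, \cite{cmsurvey}). This route is intrinsically limited to $g\le 5$ because dominance of the Prym map fails for $g\ge 6$. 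If you want to salvage something from your plan, the honest statement is that it recovers the known bound $\mult_z\theta\le g-1$ for indecomposable ppav from \cite{kollarbook} and \cite{eila1}, and correctly identifies where the open problem begins; the degeneration step you sketch at the end also presupposes the bound on the smaller-dimensional pieces and so cannot bootstrap itself without the missing inequality.
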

This conjecture is known to hold for $g\le 5$. Indeed, for $g\le 3$ we have  $\calA_g^{\ind}=\calJ_g$, and by the
Riemann theta singularity theorem the statement of the conjecture holds for Jacobians. Similarly, by the Prym-Riemann theta singularity theorem, this also holds for Prym varieties, and can also be shown to hold for their degenerations in $\calA_g^{\ind}$: see \cite{casalaina}, \cite{casalaina2}, \cite{cmfr}, \cite{cmsurvey}. Since the Prym map to $\calA_g$ is dominant for $g\le 5$, the conjecture therefore holds for $g$ in this range.
A more general question in this spirit was raised by Casalaina-Martin:
\begin{qu}[{\cite[Question 4.7]{cmsurvey}}]
Is it true that $F_{k,g-2k+2}\subset\calA_g^{dec}$?
\end{qu}

\smallskip
While studying singularities of theta divisors at arbitrary points appears very hard, geometric properties of the theta divisor at $2$-torsion points are often easier to handle: using the heat equation one can translate them into conditions on the ppav itself. Moreover, as we shall see, the resulting loci are of intrinsic geometric interest.
We will therefore now concentrate on such questions.
\begin{df}
We denote by $T_a^{(g)}[2]:=T_a^{(g)}\cap\calX_g[2]^{\odd/\even}$ the set of $2$-torsion points of multiplicity at least $a$ lying on the theta divisor, where the parity $\odd/\even$ is chosen to be the parity of $a$.
\end{df}
This definition is motivated by the fact that the multiplicity of the theta function at a $2$-torsion point is odd or even, respectively, depending on the parity of the point.
We have already encountered the first non-trivial case, $a=2$, when
$T_2^{(g)}[2]$ is the locus of even $2$-torsion points lying on the universal theta divisor, and thus $\pi(T_2^{(g)}[2])=\tn\subset\calA_g$ is the theta-null divisor discussed above.
Already the next case turns out to be much more interesting and difficult, and we now survey what is known about it.

Indeed, we denote $I{(g)}:=\pi(T_3^{(g)}[2])$.
Geometrically, this is the locus of ppav where the theta divisor has multiplicity at least
three at an odd $2$-torsion point; analytically, this is to say that the gradient of the theta function vanishes at an odd $2$-torsion point. Beyond being natural to consider in the study of theta functions, this locus has geometric significance. In particular, for low genera we have
$$
 I^{(3)}=\calA_{1,1,1};\qquad I^{(4)}=\Hyp_{1,3},
$$
which are very natural geometric subvarieties of $\calA_3$ and $\calA_4$, while for genus 5
$$
 I^{(5)}=\overline{IJ}\cup\left(\calA_1\times \theta_{\rm null}^{(4)}\right),
$$
where $\overline{IJ}$ denotes the closure in $\calA_5$ of the locus $IJ$ of intermediate Jacobians of cubic threefolds (this subject originated in the seminal paper of Clemens and Griffiths \cite{clgr}; see \cite{cmfr}, \cite{casalaina2}, \cite{cmla}, \cite{grsmconjectures}, \cite{grhu1} for further references).
In any genus, $\calA_1\times \theta_{\rm null}^{(g-1)}$ is always an irreducible component of $I^{(g)}$ (see \cite{grsmconjectures}), but for $g>4$ the locus $I^{(g)}$ is reducible.

The loci $T_3^{(g)}[2]$ and $T_3^{(g)}$, and the gradients of the theta function at higher torsion points, were studied by Salvati Manni and the first author in \cite{grsmodd1}, \cite{grsmodd2},
where they showed that the values of all such gradients determine a ppav generically uniquely. Furthermore, in \cite{grsmconjectures} Salvati Manni and the first author (motivated by their earlier works \cite{grsmgen4} and \cite{grsmordertwo} on double point singularities of the theta divisor at $2$-torsion points) studied the geometry of these loci further, and made the following
\begin{conj}[\cite{grsmconjectures}]\label{conj:dimIg}
The loci $F_{3,0}^{(g)}=\pi(T_3^{(g)})$ and $I^{(g)}:=\pi(T_3^{(g)}[2])$ are purely of codimension $g$ in $\calA_g$.
\end{conj}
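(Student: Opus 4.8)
The conjecture asserts \emph{purity} of a prescribed codimension, so the task splits into an upper bound (every irreducible component has codimension at most $g$) and a lower bound (no component has codimension strictly less than $g$). Throughout I would work on a level cover $\calA_g(\ell)$ with $8\mid\ell$, where $\theta$ is an honest section of a line bundle and the odd $2$-torsion points form disjoint sections, and then descend via the Galois action. For $I^{(g)}=\pi(T_3^{(g)}[2])$ the local model is clean: fixing one odd $2$-torsion section $m=(\ep,\de)$, the locus is cut out by the $g$ components of the gradient $\nabla_z\tc\ep\de(\tau,0)$, regarded as functions of $\tau$. For $F_{3,0}^{(g)}=\pi(T_3^{(g)})$ I would instead study the incidence variety $T_3^{(g)}=\Sing\T_g\subset\calX_g$ directly, aiming to prove that it is purely of codimension $2g$ in $\calX_g$ and that $\pi$ is generically finite on each of its components; since $\dim\calX_g=g(g+3)/2$, this forces image dimension $g(g-1)/2$, hence codimension $g$. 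This is the natural analogue, one order higher, of Debarre's theorem that $S=T_2^{(g)}$ is purely of codimension $g+1$.

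The upper bound is the easier half. For $I^{(g)}$, Krull's height theorem applied to the $g$ gradient equations shows every component has codimension at most $g$; to see this bound is attained I would exhibit the component $\calA_1\times\tn^{(g-1)}$, which arises from a product $E\times B$ where $B$ carries a vanishing theta-null, so that $\theta_E\cdot\theta_B$ acquires a point of multiplicity $1+2=3$ at the odd $2$-torsion point formed from the node of $\theta_E$ and the even double point of $\theta_B$. A direct count gives $\dim(\calA_1\times\tn^{(g-1)})=1+(g(g-1)/2-1)=g(g-1)/2$, exactly codimension $g$; this component also anchors the induction below, and the descriptions of $I^{(3)},I^{(4)},I^{(5)}$ recalled above furnish base cases. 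For $F_{3,0}^{(g)}$ the upper bound is subtler and I would extract it together with the lower bound from the pure-codimension statement for $T_3^{(g)}$ inside $\calX_g$.

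The lower bound is the crux, and the main difficulty is that no soft dimension count is available. Indeed, by the heat equation~(\ref{equ:heat}) the locus $T_3^{(g)}$ sits inside $S=T_2^{(g)}$ as the vanishing of the full $z$-Hessian of $\theta$; but treating this Hessian as a generic symmetric-matrix-valued section, a degeneracy-locus count predicts codimension $g(g+1)/2$ inside $S$, hence an expected dimension of $-1$, contradicting non-emptiness. Thus the Hessian of $\theta$ is highly special and purity cannot follow from intersection-theoretic genericity. Instead I would argue by induction on $g$ through degeneration to the boundary of a toroidal compactification. Extending $\T_g$ across the rank-one boundary, whose interior is essentially the universal family over $\calA_{g-1}$, and expanding $\theta$ in its Fourier--Jacobi series, one reduces the vanishing of the genus-$g$ gradient to the corresponding vanishing in genus $g-1$ together with explicit conditions on the semiabelian extension data. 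The inductive hypothesis bounds the codimension of the genus-$(g-1)$ locus, and upper-semicontinuity of fibre dimension then bounds the codimension of any component of $I^{(g)}$ (resp.\ $F_{3,0}^{(g)}$) whose closure meets the boundary, while a specialization argument is meant to rule out interior components evading the boundary.

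The hardest step, and the one where I expect the real work to lie, is controlling this boundary extension and the matching of leading terms. Making the degeneration precise requires exactly the careful extension of the universal theta divisor to a toroidal compactification that this survey stresses is technically delicate, and the reduction of the gradient conditions introduces auxiliary conditions on the extension data whose codimension must be tracked so as not to lose the sharp value $g$. For $F_{3,0}^{(g)}$ there is the further obstacle of establishing generic finiteness of $\pi$ on $T_3^{(g)}$ --- that a ppav whose theta divisor has a triple point carries, generically in each component, only finitely many such points --- for which the uniqueness results on gradients of \cite{grsmodd1}, \cite{grsmodd2} should be the right input. Absent these ingredients, one is left only with the unconditional upper bounds and the single anchoring component $\calA_1\times\tn^{(g-1)}$.
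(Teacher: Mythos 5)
You should first be aware that the statement you are attempting is labelled a \emph{conjecture} in the paper, and it is genuinely open: the survey contains no proof of it. What is known, and what the paper records, is exactly the following: the upper bound on the codimension of $I^{(g)}$ (it is locally cut out by the $g$ equations $f_m=0$, so Krull's height theorem applies, as in your first step); the fact that $\calA_1\times\tn^{(g-1)}$ is always an irreducible component of $I^{(g)}$ of the right dimension (your multiplicity count $1+2=3$ on the product theta divisor and the dimension count are both correct); the result of \cite{grsmconjectures} that $\overline{I^{(g)}}\cap\partial\calA_g'$ has codimension $g$ in $\partial\calA_g'$; and the proof of the $I^{(g)}$ statement for $g\le 5$ in \cite{grhu2}, which follows the degeneration strategy you outline but requires an exhaustive stratum-by-stratum analysis of the boundary of $\Perf$ up to torus rank $5$ and of the semiabelic theta divisors over each stratum. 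There is therefore no ``paper's own proof'' to compare against, and your text can only be judged as a programme.

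As a programme it is faithful to the known partial results, and you are candid about where it stops; but the steps you defer are precisely the open content, so nothing here is a proof. Concretely: (i) the induction through the rank-one boundary only controls components whose closure meets $\calX_{g-1}=\partial\calA_g'$ --- a hypothetical component of codimension $<g$ could meet the Satake boundary only in the deeper strata $\calA_{g-k}$, $k\ge 2$, which is exactly why \cite{grhu2} must treat all strata of $\Perf$ of codimension up to $5$, and why the method currently stalls at $g=6$, where $\Vor$ no longer dominates $\Perf$ and no universal family over $\Perf$ is available for computing the degenerate gradients. Your ``specialization argument to rule out interior components'' is the missing theorem, not a routine step. (ii) For $F_{3,0}^{(g)}$ the situation is worse than your sketch suggests: the paper explicitly notes that even the codimension of $T_3^{(g)}$ in $\calX_g$ is unknown, and your own (correct) observation that the Hessian degeneracy count gives expected dimension $-1$ shows that no general-position argument can yield purity of $T_3^{(g)}$; generic finiteness of $\pi$ on its components is likewise unestablished, and the results of \cite{grsmodd1}, \cite{grsmodd2} concern recovering the ppav from \emph{all} gradients at torsion points, not finiteness of triple points on a single theta divisor. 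In short, your unconditional content (the upper bound for $I^{(g)}$ and the anchor component) agrees with the paper, but the conjecture itself remains open for $g\ge 6$, and for $F_{3,0}^{(g)}$ essentially in all generality.
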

The motivation for these conjectures comes from the cases $g\le 5$  discussed above, and also from some degeneration considerations that we will discuss in Section \ref{sec:degen}. In our joint work \cite{grhu2} we proved the above conjecture for $T_3^{(g)}[2]$ for $g\le 5$ directly, without using the beautiful elaborate geometry of intermediate Jacobians and degenerations of the Prym map that was used in \cite{cmfr}, \cite{casalaina2} to previously obtain the proof. Our method was by degeneration: we studied in detail the possible types of semiabelic varieties that can arise in the boundary of the moduli space, and described the closure of the locus $I^{(g)}$ in each such stratum; the details are discussed in Section \ref{sec:degen}.

\section{Compactifications of $\calA_g$}\label{sec:compactifications}

Compactifications of $\calA_g$ have been investigated extensively, and there is a vast literature on this
subject. We will not even attempt to summarize this, but will restrict ourselves to recalling the most
important results in so far as they are relevant for our purposes. The first example of such a compactification
is the {\em Satake compactification} of $\calA_g$, constructed in \cite{satake}, which was later generalized by Baily and Borel from the
Siegel upper half-space to arbitrary bounded symmetric domains \cite{babo}.
The idea is simple: theta constants can be used to embed $\calA_g$ into some projective space,
and the Satake compactification $\Sat$ is the closure of the image of this embedding. Another way to express this is that
the Satake compactification is the Proj of the graded ring of modular forms, or in yet other words, that
one uses a sufficiently high multiple of the Hodge line bundle to embed $\calA_g$ in a projective space, and then takes the closure. This argument also shows that $\calA_g$ is a quasi-projective variety. Set theoretically
the Satake compactification is easy to understand:
\begin{equation}\label{equ:satake}
\Sat = \calA_g \sqcup \calA_{g-1} \sqcup \calA_{g-2} \sqcup \cdots \sqcup \calA_{0},
\end{equation}
but it is non-trivial to equip this set-theoretic union with a good topology and analytic or
algebraic structure. The boundary of $\Sat$ has codimension $g$, and the compactification is highly singular at the boundary.

To overcome the disadvantages of the Satake compactification, Mumford et al.~\cite{amrtbook} introduced the concept of
toroidal compactifications. Unlike the Satake compactification, the boundary of a toroidal compactification is a divisor in it.
There is no canonical choice of a toroidal compactification: in fact toroidal compactifications of $\calA_g$
depend on a fan, that is a rational partial polyhedral decomposition of the (rational closure of the)
cone of positive definite real symmetric $g \times g$ matrices. There are three known such decompositions (which
can be refined by taking subdivisions), namely the {\em first Voronoi} or {\em perfect cone
decomposition}, the {\em central cone decomposition}, and the {\em second Voronoi decomposition}. Each of these
leads to a compactification of $\calA_g$, namely $\Perf$, $\calA_g^{\op{cent}}$, and $\Vor$ respectively.
The central cone compactification is also denoted $\calA_g^{\op{cent}}=\Igu$ since it coincides with Igusa's
blow-up of the Satake compactification along its boundary.

By now the geometric meaning of all these compactifications
has been understood. Shepherd-Barron \cite{shepherdbarron} proved that $\Perf$ is a canonical model of $\calA_g$ in the
sense of the minimal model program if $g \geq 12$. Finally Alexeev \cite{alexeev} showed that $\Vor$ has a
good modular interpretation, we will comment on this below. The toroidal compactification obtained from a fan with some cones subdivided can be obtained from the original toroidal compactification by a blow-up, and thus by choosing a suitably fine subdivision of the fan one can arrange that all cones are basic. The corresponding toroidal compactification would then have only finite quotient singularities due to non-neatness of the group
$\Sp(g,\ZZ)$ --- we refer to this as {\em stack-smooth}. For $g\leq 3$ all three compactifications coincide and
are stack-smooth. For $g=4$ the perfect cone and the Igusa compactification coincide:
$\calA_4^{\op{Igu}}=\calA_4^{\op{Perf}}$, but are not stack-smooth, whereas $\calA_4^{\op{Vor}}$ is. In genus $g=4,5$
the second Voronoi decomposition is a refinement of the perfect cone decomposition, i.e. $\Vor$ is
a blow-up of $\Perf$ for $g=4,5$, but in general neither is a refinement of the other, and all three
fans are different.

We also recall that any toroidal compactification $\calA_g^{\op{tor}}$ admits a natural contracting map to the Satake compactification,
$p:\calA_g^{\op{tor}} \to \Sat$. Pulling back the stratification (\ref{equ:satake}) of the Satake compactification thus defines a stratification of
any toroidal compactification, and the first two strata of this, $\calA_g'=p^{-1}(\calA_g \sqcup \calA_{g-1})$, are of special
interest. Indeed, $\calA_g'$ is called {\em Mumford's partial compactification}, and is the same for all toroidal
compactifications. As a stack, it is the disjoint union of $\calA_g$ and the universal family $\calX_{g-1}$.

In Section \ref{sec:degen} we shall discuss degeneration techniques which require the existence of a universal
family over a toroidal compactification of $\calA_g$. This is a very difficult and delicate problem, which has a long
history. The first approach is due to Namikawa \cite{namikawa1}, \cite{namikawa2} who constructed a family over
$\Vor(\ell)$ that carries $2\ell$ times a principal polarization. Chai and Faltings \cite[Chapter VI]{fachbook}
constructed compactifications of the universal family over stack-smooth
projective toroidal compactifications of $\calA_g$.

In \cite{alexeev} Alexeev introduced a new aspect into the theory, namely the use of log-geometry.
He defined the functor of {\em stable semiabelic pairs} $(X,\Theta)$ consisting of a
variety $X$ that admits an action of a semiabelian variety (i.e.~an extension of an abelian variety of dimension $g-k$ by a torus $(\CC^*)^k$) of the same dimension, and an effective ample Cartier divisor $\Theta\subset X$ fulfilling the following conditions: $X$ is seminormal, the group action has only finitely many orbits, and the stabilizer of any point is connected, reduced, and lies in the toric part of the semiabelian variety.
The prime example is that of a principally polarized abelian variety acting on itself by translation,
together with its theta divisor. This functor is represented by a scheme $\overline{\calA \calP_g}$
which has several components, one of which ---
the principal component $\overline{\calP_g}$ ---
contains the moduli space $\calA_g$ of ppav.
It is currently unclear whether $\overline{\calP_g}$ is normal,
but it is known that its normalization is isomorphic to $\Vor$.
For a discussion of the other components appearing in $\overline{\calA \calP_g}$, also called ET for
``extra-type'', we refer the reader to \cite{alextra}.
In either case the universal family on $\overline{\calP_g}$ can be pulled back to give a universal family
over $\Vor$.
We would like to point out that Alexeev's construction is in the category of  stacks.
If we want to work with schemes (and restrict ourselves to
ppav and their degenerations), then we obtain the
following: for every point in the projective scheme that represents $\overline{\calP_g}$ we can find a neighborhood
on which, after a finite base change, we can construct a family of stable semiabelic varieties (SSAV).
We note that Alexeev's construction can lead to families with non-reduced fibers for $g \geq 5$.
Alexeev's theory has been further developed  by Olsson \cite{olsson}, who modified the functor used by Alexeev in such
a way as to single out the principal component through the definition of the functor. Olsson also treats
the cases of non-principal polarizations and level structures.

Yet another approach was pursued by Nakamura.
In fact Nakamura proposes two different constructions.
His first approach uses GIT stability.
In \cite{naklevel1} he defines the functor of {\em projectively stable quasi-abelian schemes} (PSQAS).
For every $\ell \geq 3$ this functor is represented by a projective moduli {\em scheme} $SQ_g(\ell)$
over which a universal family exists. Nakamura's theory also extends to non-principal polarizations.
It should, however, be noted that, as in Alexeev's case, the fibers of this universal family
can in general be non-reduced. Also the total space $SQ_g(\ell)$ is known not to be normal, see
\cite{nasu}. The universal family over $SQ_g(\ell)$ has a polarization which is $\ell$ times a
principal polarization, as well as a universal level $\ell$ structure
(see our discussion in Section \ref{sec:notation}).

In his second approach Nakamura
\cite{naklevel2} introduces the functor
of {\em torically stable quasi-abelian schemes} (TSQAS). For a given level $\ell$ this is represented
by a projective scheme $SQ_g(\ell)^{\operatorname{toric}}$, which, for $\ell \geq 3$, is a coarse moduli
space for families
of TSQAS over {\em reduced} base schemes. There is no global universal family over
$SQ_g(\ell)^{\operatorname{toric}}$, but there is locally a universal family after possibly taking a finite
base change. The advantage of these families is that all fibers are reduced. By \cite{naklevel2} there is a natural
morphism $SQ_g(\ell)^{\operatorname{toric}} \to SQ_g(\ell)$ which is birational and bijective. Hence
both schemes have the same normalization, which is in fact isomorphic to the second Voronoi compactification
$\Vor(\ell)$.

The structure of the semiabelic varieties in question can be deduced from
the constructions in \cite{alna} and \cite{alexeev}. Indeed, every point in $\Vor$
lies in a stratum corresponding to some unique Voronoi cone. Let $g'$, for $0 \leq g' \leq g$, be the maximal
rank of a matrix in this cone. Then $g'$ is the torus rank of the associated semiabelic variety and
the Voronoi cone defines a Delaunay decomposition of the real vector space $\RR^{g'}$ which determines
the structure of the torus part of the semiabelic variety. This picture also
allows one to read off the structure of the polarization on the semiabelic variety,
which is in fact given as a limit of the Riemann theta function. In general these constructions turn out to be
rather complicated, especially when the rank of the torus part increases.
The geometry for most cases of torus rank up to 5 is
studied in detail, explicitly, in \cite{grhu2}.

The construction of universal families is subtle, as one can see already in genus $1$. Here one has the
well-known universal elliptic curve $S(\ell) \to X(\ell)=\calA_1^{\operatorname{Vor}}(\ell)$ over
the modular curve of level $\ell$. If $\ell$ is odd, then this coincides with Nakamura's family
of PSQAS, if $\ell$ is even it is different. In the latter case the two families are related by an isogeny,
see \cite{nate}. The main technical problems in  higher genus, such as non-normality
or non-reduced fibers, arise from
the difficult combinatorics of the Delaunay polytopes and Delaunay
tilings in higher dimensions.

Finally we want to comment on the connection with the moduli space of curves. Torelli's theorem says
that the Torelli map $\calM_g \to \calA_g$, sending a curve $C$ to its Jacobian $Jac(C)$, is injective as a map of coarse moduli schemes (note, however, that as every ppav has an involution $-1$, and not all curves do, for $g\ge 3$ as a map of stacks the Torelli map is of degree 2, branching along the hyperelliptic locus).

Mumford and Namikawa
investigated in the 1970's whether this map extends to a map from $\overline{\calM_g}$ to a suitable
toroidal compactification, and it was shown in \cite{namikawa1}, \cite{namikawa2} that this is indeed the
case for the second
Voronoi compactification, i.e.~that there exists a morphism $\overline{\calM_g} \to \Vor$.
Recently Alexeev and Brunyate \cite{albr} showed that an analogous result also holds for the
perfect cone compactification, i.e.~there exists a morphism $\overline{\calM_g} \to \Perf$ extending
the Torelli map. Moreover, they showed that there is a Zariski open neighborhood of the image of the
Torelli map, where $\Vor$ and $\Perf$ are isomorphic. Melo and Viviani \cite{mevi} further
showed that the Voronoi and the perfect cone compactification agree along the so called matroidal locus.
Finally, Alexeev et al \cite{albr}, \cite{aletal} proved that
the Torelli map can be extended to the Igusa compactification $\calA_4^{\op{Igu}}$ if and only if $g \leq 8$.
We also recall that the extended Torelli map is no longer injective for $g\ge 3$: the Jacobian of a nodal curve consisting of two irreducible components attached at a node forgets the point of attachment.
The fibers of the Torelli map on the boundary of $\overline{\calM_g}$ were
analyzed in detail by Caporaso and Viviani in \cite{cavi}.

\section{Class computations, and intersection theory on $\calX_g$}
In the cases where one knows the codimension of the loci $T_a$ or $F_{a,b}$, one could then ask to compute their class in the cohomology or Chow rings of $\calX_g$ or $\calA_g$, respectively.

The cohomology ring $H^*(\calA_g)$ and the Chow ring $CH^*(\calA_g)$ are not fully known for $g\geq 4$, and are
expected to contain various interesting classes (in particular non-algebraic classes in cohomology). One approach to understanding geometrically defined loci within $\calA_g$ is by defining a suitable tautological subring of Chow or cohomology, and then arguing that the classes of such loci would lie in this subring.
\begin{df}\label{def:Hodgeclasses}
We denote by $\EE:=\pi_*(\Omega_{\calX_g/\calA_g})$ the rank $g$ Hodge vector bundle, the fiber of which over a ppav $(A,\T)$ is the space of holomorphic differentials $H^{1,0}(A,\CC)$. We then denote by $\lambda_i:=c_i(\EE)$ the Hodge classes, considered as elements of the Chow or cohomology ring of $\calA_g$. The tautological ring is then defined to be the subring (of either the Chow or the cohomology ring of $\calA_g$) generated by the Chern
classes $\lambda_i$.
\end{df}
It turns out, see \cite{mumhirz}, \cite{fachbook} that the Hodge vector bundle extends to any
toroidal compactification $\calA_g^{\op{tor}}$ of $\calA_g$, and thus one can study the ring generated by
$\lambda_i$ in the cohomology or Chow ``ring'' of a compactification.
We note, however, that an arbitrary toroidal compactification will in general be singular, and thus it is a
priori unclear whether there is a ring structure on the Chow groups.
We can, however, consider Chern classes of a vector bundle as elements in the operational Chow groups of
Fulton and MacPherson.
All operations with Chern classes will thus be performed in this operational Chow ring, and the resulting classes will
then act on cycles by taking the cup product.
Taking the cup product with the fundamental
class, we can also associate Chow
homology classes to Chern classes and, by abuse of notation, we will not distinguish between a Chern class and its
associated Chow homology class. We refer to
\cite[Chapter 17]{fultonintersection} and the references therein for more details on the operational Chow ring.

It turns out that, unlike the case of the moduli of curves $\calM_g$ where the tautological ring is not yet fully known, and there is much ongoing research on Faber's conjectures \cite{faberconjecture}, the tautological rings for $\calA_g^{\op{tor}}$ and $\calA_g$ can be described completely.
\begin{thm}[van der Geer \cite{vdgeercycles} for cohomology, and \cite{vdgeercycles} and Esnault and Viehweg \cite{esvi} for Chow]
For a suitable toroidal compactification $\calA_g^{\op{tor}}$, the tautological ring of $\calA_g^{\op{tor}}$ is the same in Chow and cohomology, and generated by the classes $\lambda_i$ subject to one basic relation
$$
 (1+\lambda_1+\ldots+\lambda_g) (1-\lambda_1+\ldots+(-1)^g\lambda_g)=1.
$$
This implies that additive generators for the tautological ring are of the form $\prod \lambda_i^{\varepsilon_i}$ for $\varepsilon_i\in \{0,1\}$, and that all even classes $\lambda_{2k}$ are expressible polynomially in terms of the odd ones.

Moreover, the tautological ring of the open part $\calA_g$ is also the same in Chow and cohomology, and obtained from the tautological ring of $\calA_g^{\op{tor}}$ by imposing one additional relation
$\lambda_g=0$.
\end{thm}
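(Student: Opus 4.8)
The plan is to rest everything on one structural input (vanishing of the Chern classes of the Gauss--Manin local system) and one hard input (a proportionality principle identifying the tautological ring with the cohomology of the compact dual). First I would produce the basic relation $c(\EE)c(\EE^\vee)=1$, i.e. $(1+\lambda_1+\ldots+\lambda_g)(1-\lambda_1+\ldots+(-1)^g\lambda_g)=1$. Over the open $\calA_g$ this is immediate in cohomology: the $C^\infty$ complex vector bundle underlying the weight-one variation $R^1\pi_*\CC$ carries the flat Gauss--Manin connection, so by Chern--Weil its total Chern class is trivial; the Hodge filtration splits it smoothly as $\EE\oplus\overline{\EE}$, and the principal polarization identifies $\overline{\EE}\cong\EE^\vee$, whence $c(\EE)c(\EE^\vee)=c(R^1\pi_*\CC\otimes\calO)=1$. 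To push this to a toroidal compactification I would replace the flat bundle by Deligne's canonical extension across the boundary divisor with its logarithmic connection; since the residues are nilpotent, the associated Chern forms still vanish, giving the relation in $H^*(\calA_g^{\op{tor}})$. For the Chow statement I would invoke the algebraic refinement of Esnault and Viehweg, who show that the logarithmic extension forces $c(\EE)c(\EE^\vee)=1$ already in $\CH^*(\calA_g^{\op{tor}})$ (in the operational sense) and that the Chow and cohomology tautological rings coincide.

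The heart of the matter is that this is the \emph{only} relation. Here I would identify the abstract quotient $\QQ[\lambda_1,\dots,\lambda_g]/(c(\EE)c(\EE^\vee)=1)$ with the cohomology ring of the compact dual of $\calH_g$, namely the Lagrangian Grassmannian $\op{LG}(g,2g)=\Sp(2g,\CC)/P$ of Lagrangian subspaces, whose cohomology is exactly this quotient, with $\lambda_i$ the Chern classes of the tautological rank-$g$ subbundle. This ring is classically known to have the $2^g$ square-free monomials $\prod\lambda_i^{\varepsilon_i}$, $\varepsilon_i\in\{0,1\}$, as a $\QQ$-basis, which yields assertion (B); moreover solving the relation in each even degree $2k$ gives $2\lambda_{2k}=P_k(\lambda_1,\dots,\lambda_{2k-1})$, exhibiting the even classes as polynomials in the odd ones, while the odd-degree components of the relation are identities. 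There is a natural ring map from $H^*(\op{LG}(g,2g))$ to the tautological subring of $H^*(\calA_g^{\op{tor}})$, sending the Chern classes of the tautological bundle to those of the automorphic bundle $\EE$; it is surjective by definition, and the real content is its injectivity. I would establish injectivity through the Hirzebruch--Mumford proportionality principle, which computes tautological Chern numbers on $\calA_g^{\op{tor}}$ as a fixed nonzero multiple of the corresponding numbers on the compact dual; since the top tautological class of $\op{LG}(g,2g)$ pairs nontrivially, the induced pairing is nondegenerate and the graded map is forced to be injective.

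Finally I would treat the passage to the open $\calA_g$, where the extra relation $\lambda_g=0$ appears. The tautological ring of $\calA_g$ is the image of that of $\calA_g^{\op{tor}}$ under restriction, hence the quotient by the ideal of tautological classes supported on the boundary. The key vanishing $\lambda_g|_{\calA_g}=0$ I would obtain by localizing $c_g(\EE)$ to Mumford's partial compactification $\calA_g'$: along the boundary a rank-one torus degenerates, and the invariant logarithmic differential $dt/t$ of the $\CC^*$-factor produces a trivial sub-line-bundle of $\EE$, so $c_g(\EE)$ is represented by a cycle supported on the boundary and dies upon restriction to $\calA_g$ (the genus-one case being the classical fact that $\lambda_1$ is concentrated at the cusp). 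A proportionality and dimension count then shows the boundary ideal is generated precisely by $\lambda_g$, giving $\QQ[\lambda_1,\dots,\lambda_g]/(c(\EE)c(\EE^\vee)=1,\ \lambda_g)$.

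The step I expect to be the main obstacle is the injectivity in the second paragraph — proving there are no relations beyond $c(\EE)c(\EE^\vee)=1$ — which genuinely needs the proportionality principle rather than any formal argument; and for the Chow version, the delicate Esnault--Viehweg analysis that both the relation and the Chow-versus-cohomology comparison survive on the singular compactification in the operational Chow ring. A secondary subtlety is pinning down that the boundary ideal on the open part is exactly $(\lambda_g)$ and not something larger.
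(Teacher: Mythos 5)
The paper itself does not prove this theorem --- it is quoted with attributions to van der Geer and Esnault--Viehweg --- so your proposal must be measured against those references. Your first two paragraphs do follow their route correctly: the relation $c(\EE)c(\EE^\vee)=1$ comes from flatness of the Gauss--Manin bundle together with the $C^\infty$ splitting $\EE\oplus\overline{\EE}$ and the polarization $\overline{\EE}\cong\EE^\vee$, extended over the boundary via the canonical extension with nilpotent residues (Mumford, Esnault--Viehweg for Chow); and the absence of further relations is exactly the Gorenstein/compact-dual argument, with injectivity forced by the nonvanishing of the socle $\lambda_1^{g(g+1)/2}$, which Hirzebruch--Mumford proportionality (on a neat level cover, then averaging) supplies. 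The bookkeeping about square-free monomials and the even $\lambda_{2k}$ is also right.

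The genuine gap is in your treatment of $\lambda_g|_{\calA_g}=0$. The observation that near the boundary the form $dt/t$ of the degenerating $\CC^*$ gives a trivial sub-line-bundle of the canonical extension of $\EE$ is only a local statement on a neighborhood of the boundary; it does not produce a global section of $\EE$ over $\calA_g^{\op{tor}}$ (or over $\calA_g'$) whose zero locus lies in the boundary, and without such a section you cannot conclude that $c_g(\EE)$ is represented by a cycle supported there. Indeed $\EE$ has no nonzero global sections over $\calA_g$ in general, so this strategy cannot be repaired directly. The proof in van der Geer's paper is different and genuinely global: apply Grothendieck--Riemann--Roch to $\pi:\calX_g\to\calA_g$ and $\calO_{\calX_g}$. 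Since $R^i\pi_*\calO=\wedge^i\EE^\vee$, the Borel--Serre identity gives $\op{ch}(\pi_!\calO)=c_g(\EE)\cdot\op{Td}(\EE)^{-1}$, while the right-hand side $\pi_*\op{Td}(\pi^*\EE^\vee)=\op{Td}(\EE^\vee)\cdot\pi_*(1)$ vanishes because $\pi$ has $g$-dimensional fibers; as $\op{Td}(\EE)^{-1}$ is a unit, $\lambda_g=0$ in $\CH^g(\calA_g)_{\QQ}$. Your closing "proportionality and dimension count" for the claim that the boundary ideal is exactly $(\lambda_g)$ is also not enough: proportionality lives on the compactification, and what is actually needed is that the socle of $R_g/(\lambda_g)$, i.e.\ $\lambda_1^{g(g-1)/2}$, survives on the open part --- which van der Geer obtains by pushing forward to the Satake compactification and inducting over its strata (equivalently, from complete subvarieties of $\calA_g$ of dimension $g(g-1)/2$ on which $\lambda_1$ is ample); the Gorenstein property of $R_g/(\lambda_g)\cong R_{g-1}$ then pins the kernel down to $(\lambda_g)$.
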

\begin{rem}
Notice that from the above theorem it follows that the tautological ring of $\calA_{g-1}^{\op{tor}}$ is isomorphic to the tautological ring of $\calA_g$. We do not know a geometric explanation for this fact. We also refer to the next section of the text, and in particular to Question \ref{qu:exttaut} for further questions on possible structure of suitably enlarged tautological rings of compactifications.
\end{rem}

In low genus the entire cohomology and Chow rings are known. Indeed, the Chow rings of $\calA_2$ and  $\calA_2^{\op{tor}}$ (recall that for $g=2,3$ all known toroidal compactifications coincide) were computed by Mumford \cite{mumfordtowards} (and are classically known to be equal to the cohomology, see \cite{huto} for a complete proof of this fact), while the cohomology ring of the Satake compactification of $\calA_2$ was computed by Hain.
The cohomology of $\calA_3$ and its Satake compactification was computed by Hain \cite{hain}, the Chow ring of $\calA_3$ and its toroidal compactification was computed by van der Geer \cite{vdgeerchowa3}, and the second author and Tommasi \cite{huto} computed the cohomology ring of $\calA_3^{\op{tor}}$, which turns out to also equal its Chow ring. It turns out that for $g=2,3$ the cohomology and Chow rings of $\calA_g$ are equal to the tautological rings. Finally, the second author and Tommasi \cite{huto2} computed much of the cohomology of the (second Voronoi) toroidal compactification $\calA_4^{\op{Vor}}$; in particular they showed that $H^{12}(\calA_4)$ contains a non-algebraic class, as does $H^{6}(\calA_3)$, as was shown by Hain \cite{hain}.
We also refer the reader to van der Geer's survey article \cite{vdghomsurvey}.
The methods of computing the cohomology and Chow rings in low genus make extensive use of the explicit geometry, and extending them to higher genus currently appears to be out of reach. However, another natural question, which may possibly give an inductive approach to studying the cohomology by degeneration, is
to define a tautological ring for the universal family:
\begin{df}
We define the tautological rings of $\calX_g$ to be the subrings of the Chow and cohomology rings (with rational coefficients) generated by the pullbacks of the Hodge classes $\pi^*\lambda_i$, and the class $[\T_g]$ of the universal theta divisor given by the theta function (see Section \ref{sec:notation}).
\end{df}
Note that $\T_g\subset\calX_g$ here denotes the (ample) universal theta divisor defined by theta function, but computationally it is often easier to work with the universal theta divisor that is trivial along the zero section --- we denote this line bundle by $T_g$.
Since theta constants are modular forms of weight one half we have the relation
\begin{equation}\label{equ:theta}
[T_g]=[\T_g] - \pi^*(\frac{\lambda_1}{2})
\end{equation}
for the classes of these divisors in $\Pic_\QQ(\calX_g)$.

Then the cohomology tautological ring of $\calX_g$ is described simply as follows:
\begin{thm}
The cohomology tautological ring of $\calX_g$ is generated by the pullback of the tautological ring of $\calA_g$ and the class of the universal theta divisor trivialized along the zero section, with one relation $[T_g]^{g+1}=0$.
\end{thm}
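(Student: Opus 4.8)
The plan is to compute the relevant part of $H^*(\calX_g;\QQ)$ by exploiting that $\pi\colon\calX_g\to\calA_g$ is a family of abelian varieties, and then to locate the generators $\pi^*\lambda_i$ and $[T_g]$ inside this computation. Write $\VV:=R^1\pi_*\QQ$ for the standard weight-one local system of rank $2g$ on $\calA_g$, so that the fibrewise cohomology is $R^q\pi_*\QQ=\Lambda^q\VV$. Since $\pi$ is a smooth projective (orbifold) morphism, Deligne's degeneration theorem applies and the Leray spectral sequence degenerates at $E_2$, giving
\[
 H^n(\calX_g;\QQ)=\bigoplus_{p+q=n}H^p(\calA_g,\Lambda^q\VV),
\]
with rational coefficients making all of this insensitive to the stacky structure. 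First I would record two structural facts about this decomposition: the fibrewise involution $-1\colon\calX_g\to\calX_g$ acts on $\Lambda^q\VV$ by $(-1)^q$, so the $(-1)^*$-invariant classes are exactly those concentrated in even $q$; and $\pi^*H^*(\calA_g)$ is precisely the summand with $q=0$, on which the zero section $s_0$ restricts as the identity, while $s_0^*$ annihilates every summand with $q>0$.

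The second step is to pin down the generators. The classes $\pi^*\lambda_i$ live in the $q=0$ part $H^{2i}(\calA_g,\Lambda^0\VV)=H^{2i}(\calA_g)$. For $[T_g]\in H^2(\calX_g)$ I would show that it equals the invariant class $\theta_0\in H^0(\calA_g,\Lambda^2\VV)$ given by the principal polarization, which spans the one-dimensional space $(\Lambda^2\VV)^{\Sp(2g,\ZZ)}$. Indeed, since $T_g$ is a symmetric theta divisor, $[T_g]$ is $(-1)^*$-invariant and hence has no component in the odd piece $H^1(\calA_g,\VV)$; its remaining possible components lie in $H^2(\calA_g)$ and in $H^0(\calA_g,\Lambda^2\VV)$. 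The normalization $s_0^*[T_g]=0$ (triviality along the zero section) kills the $H^2(\calA_g)$-component, and matching the fibrewise restriction to the polarization identifies the $\Lambda^2\VV$-component with $\theta_0$. Fibrewise $\theta_0$ is the symplectic form $\omega$ on a $2g$-dimensional space, whose powers satisfy $\omega^{g}\ne0$ in $\Lambda^{2g}\VV$ but $\omega^{g+1}=0$ in $\Lambda^{2g+2}\VV=0$; thus $[T_g]^{g+1}=\theta_0^{g+1}=0$, which is the asserted relation, while $[T_g]^k=\theta_0^k\in H^0(\calA_g,\Lambda^{2k}\VV)$ is nonzero for $0\le k\le g$.

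The final step is to show there are no further relations, i.e.\ that the subring generated by the $\pi^*\lambda_i$ and $[T_g]$ is free over the tautological ring $R^*(\calA_g)$ on the basis $1,[T_g],\dots,[T_g]^g$. The key point is that cup product with the flat section $\theta_0$ is induced by $\wedge\omega\colon\Lambda^q\VV\to\Lambda^{q+2}\VV$ and so raises the $q$-grading by exactly $2$, while multiplication by $\pi^*\lambda_i$ preserves it; hence a monomial $\pi^*(a_k)[T_g]^k$ with $a_k\in R^*(\calA_g)$ lies entirely in $H^*(\calA_g,\Lambda^{2k}\VV)$. Given a relation $\sum_{k=0}^g\pi^*(a_k)[T_g]^k=0$, projecting onto the $q=2k_0$ summand of the $E_2$-decomposition yields $\pi^*(a_{k_0})[T_g]^{k_0}=0$ for each $k_0$ separately. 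Multiplying by $[T_g]^{g-k_0}$ and applying $\pi_*$, together with the projection formula and the elementary fact $\pi_*[T_g]^g=g!$ (since a principal polarization satisfies $\Theta^g=g!$ fibrewise), gives $g!\,a_{k_0}=0$, hence $a_{k_0}=0$ over $\QQ$. This proves freeness, so the tautological ring is $R^*(\calA_g)[T]/(T^{g+1})$ under $T\mapsto[T_g]$, as claimed.

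I expect the main obstacle to be the careful justification of the multiplicative, grading-preserving structure on the degenerate Leray decomposition --- that cup product with $\theta_0$ really shifts the $q$-index by $2$ rather than merely respecting the Leray filtration --- since this is exactly what lets the relation $\sum_k\pi^*(a_k)[T_g]^k=0$ be split into its homogeneous pieces. Everything else (the involution eigenspace decomposition, the one-dimensionality of $(\Lambda^2\VV)^{\Sp(2g,\ZZ)}$, and the numerical identities $\omega^{g+1}=0$ and $\pi_*[T_g]^g=g!$) is routine once this structure is in place.
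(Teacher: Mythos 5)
Your outline follows essentially the same route as the paper's proof: decompose $R\pi_*\QQ$, place $[T_g]$ in the piece coming from $R^2\pi_*\QQ$ using symmetry and the trivialization along the zero section, deduce $[T_g]^{g+1}=0$ from the vanishing of $R^{2g+2}\pi_*\QQ$, and get freeness from $\pi_*[T_g]^g=g!\cdot[1]$. However, the point you defer to the end --- that cup product genuinely raises the $q$-grading rather than merely respecting the Leray filtration --- is not a technical footnote to be checked later; it is the entire content of the relation $[T_g]^{g+1}=0$. Deligne's degeneration gives a splitting of each $H^n(\calX_g;\QQ)$ that is neither canonical nor multiplicative, and filtration-compatibility alone only shows that $[T_g]^{g+1}$ dies in $\mathrm{Gr}^0_LH^{2g+2}=H^0(\calA_g,\Lambda^{2g+2}\VV)=0$, leaving possible nonzero components in $H^p(\calA_g,\Lambda^{2g+2-p}\VV)$ for $p\geq 2$. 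The paper fills exactly this gap by citing the theorem of Deninger and Murre \cite{denmur}, which for abelian schemes produces a canonical \emph{multiplicative} decomposition $R\pi_*\QQ\cong\bigoplus_i R^i\pi_*\QQ[-i]$, characterized by the property that $[n]^*$ acts on the $i$-th summand by $n^i$. Since $T_g$ is symmetric and trivialized along the zero section, the theorem of the cube gives $[n]^*[T_g]=n^2[T_g]$, which places $[T_g]$ purely in the $i=2$ summand, and multiplicativity then kills $[T_g]^{g+1}$. (Equivalently: $[n]^*-n^{2g+2}$ annihilates $[T_g]^{g+1}$ yet is invertible on $H^{2g+2}(\calX_g;\QQ)$, whose $[n]^*$-eigenvalues on the Leray graded pieces are $n^q$ with $q\leq 2g$.) So the proposal is not complete as written, but the missing step is precisely located and is supplied by this citation.

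A simplification worth noting: the freeness argument does not actually require splitting a relation into its $q$-homogeneous pieces, so the multiplicative structure is indispensable only for $[T_g]^{g+1}=0$. Granting that relation, suppose $\sum_{k=0}^g\pi^*(a_k)[T_g]^k=0$ and let $k_0$ be minimal with $a_{k_0}\neq 0$. Multiply by $[T_g]^{g-k_0}$ and apply $\pi_*$ with the projection formula: the terms with $k>k_0$ vanish because $[T_g]^{g+1}=0$, the terms with $k<k_0$ vanish because $\pi_*[T_g]^{g-a}$ lies in $H^{-2a}(\calA_g)=0$ for $a>0$, and the $k=k_0$ term gives $g!\,a_{k_0}=0$. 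This is how the paper argues, and it sidesteps the grading entirely.
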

\begin{proof}
Indeed, from the results of Deninger and Murre \cite{denmur} it follows (see \cite[Prop.~0.3]{voisin} for more discussion) that for the universal family $\pi:\calX_g\to\calA_g$ there exists a multiplicative decomposition $R\pi_*\QQ=\oplus_i R^i\pi_*\QQ[-i]$. Since $T_g$ is trivialized along the zero section, under the decomposition the class $[T_g]$ only has one term lying in $R^2$, and by the multiplicativity of the decomposition, $[T_g]^{g+1}$ would then have to lie in $R^{2g+2}$, which is zero, as the fibers of $\pi$ have real dimension $2g$. We note also that it follows that the class $[T_g]^g$ is actually equal to $g!$ times the class of the zero section of $\pi$, given by choosing the origin on each ppav. By the projection formula it is clear that any class of the form $[T_g]^i\pi^*C$ for $C$ a tautological class on $\calA_g$ and $i\le g$, is non-trivial, and thus there are no further relations.
\end{proof}
It is natural to conjecture that the above description also holds for the tautological Chow ring of $\calX_g$.

Of course one cannot expect the tautological ring of $\calX_g$ to be equal to the full cohomology or Chow ring, and thus the following question is natural
\begin{qu}
Compute the cohomology and Chow rings of $\calX_g$ and their compactifications $\overline{\calX_g}$
over $\Vor$ for small values of $g$.
\end{qu}
The Chow and cohomology groups of toroidal
compactifications of $\calA_g$ and those of $\overline{\calX_g}$ are closely related:
as we have already seen in Section \ref{sec:compactifications}, Mumford's partial compactification is the
union of $\calA_g$ and $\calX_{g-1}$, and hence the topology of $\calX_{g-1}$
contributes to that of Mumford's partial compactification $\calA_g'$. This relationship
is an example of a much more general phenomenon. Recall that the Satake compactification $\Sat$ is stratified as in (\ref{equ:satake}). Taking the preimage of this stratification under the contracting morphism $p:\calA_g^{\op{tor}} \to \Sat$ defines a stratification of any toroidal compactification by taking the strata $p^{-1}(\calA_{g-i} \setminus \calA_{g-i-1})$.
Such a stratum
is itself stratified in such a way that each substratum is the quotient of a torus bundle
over a $(g-i)$-fold
product of the universal family $\calX_{g-i} \to \calA_{g-i}$ by a finite group. This idea can be used to
try to compute the cohomology of $\calA_g^{\op{tor}}$ inductively, and this is the approach taken
in \cite{huto2}. The computation of the cohomology of the various strata then is closely related to computing the
cohomology of local systems on $\calA_{g-i}$, or rather the invariant part of it under a certain finite group.

While computing the entire cohomology or Chow rings seems out of reach, one could study the stable cohomology: the limit of $H^k(\calA_g)$ (or for compactifications) for $g\gg k$. The stable cohomology of $\calA_g$ (equivalently, of the symplectic group) was computed by Borel \cite{borel}, and the stable cohomology of $\Sat$ was computed by Charney and Lee \cite{chle}, using topological methods. There are currently two research projects, \cite{gisa} and \cite{grhuto},  under way, aiming to show the existence of and compute some of the stable cohomology of $\Perf$. Algebraic cohomology of the $(g-i)$-fold
product of the universal family $\calX_{g-i}$ is also currently under investigation in \cite{grza2}, while the difference between the classes $[T_g]^g/g!$ and the zero section on the partial compactification of $\calX_g$ was explored in \cite{futuregz},

We have already pointed out that for $g=2,3$ the compactifications $\Vor$ and $\Perf$ coincide.
It should also be pointed out that
$\overline{\calX_g}$ is not stack-smooth, already for $g=2$.
The Chow ring of $\calX_2$ (and more or less for $\overline{\calX_2}$, up to some issues of normalization) is computed by van der Geer \cite{vdgeerchowa3}, while the results of the second author and Tommasi \cite{huto2} on the cohomology of $\calA_4^{\op{Vor}}$ similarly go a long way towards describing the Chow and cohomology of $\overline{\calX_3}$.

{}The above results on the decomposition theorem and the zero section also imply the earlier results of Mumford \cite{mumforddimag} and van der Geer \cite{vdgeercycles} on the pushforwards of the theta divisor: recalling
from (\ref{equ:theta}) the class $[T_g]$ of the theta divisor trivialized along the zero section, we have
$$
 \pi_*([T_g]^g)=g!\cdot[1];\quad\pi_*([T_g]^{g+a})=0\qquad \forall a\ne 0.
$$
Using (\ref{equ:theta}), from the projection formula it then follows that
$$
\pi_*([\T_g]^g)=g!\cdot[1];\qquad  \pi_*([\T_g]^{g-a})=0;
$$
$$
\pi_*([\T_g]^{g+a})=\binom{g+a}{g}2^{-a}g!\lambda_1^a\qquad \forall a > 0.
$$

One can now try to compute the classes of various loci we defined, and in particular ask whether they are tautological on $\calX_g$. By definition $T_1^{(g)}$ is the theta divisor, i.e.~$T_1^{(g)}=\T_g$.
We can also compute the class of the locus $T_2^{(g)}$, since it is a complete intersection, defined by
the vanishing of the theta function and its $z$-gradient.
The gradient of the theta function is a section of the vector bundle $\EE\otimes \T_g$:
this is to say the gradient of the theta function is a vector-valued modular form for a suitable
representation of the symplectic group, see \cite{grsmodd1}.
We thus obtain
\begin{prop}
The class of $T_2^{(g)}$ can be computed as
$$
  [T_2^{(g)}]=c_g(\EE\otimes\T_g)\cap [\T_g] =\sum_{i=0}^g\lambda_i [\T_g]^{g-i+1}\in CH^{g+1}(\calX_g,\QQ)
$$
(recall that $\T_g$ is not trivialized along the zero section).
By pushing this formula to $\calA_g$, using the above expressions for pushforwards, we recover the result of Mumford \cite{mumforddimag}:
$$
 [N_0]=\pi_*[T_2^{(g)}]=\left(\frac{(g+1)!}{2}+g!\right)\lambda_1\in CH^1(\calA_g,\QQ).
$$
\end{prop}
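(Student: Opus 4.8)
The plan is to realize $T_2^{(g)}$ as the zero locus of a single section of a rank-$g$ vector bundle, restricted to the theta divisor, and then to identify the class of that zero locus with a top Chern class. Recall that $T_2^{(g)}$ consists of the pairs $(\tau,z)$ at which the theta function $\theta(\tau,z)$ vanishes together with all $g$ of its first-order $z$-derivatives. So I would first restrict to $\T_g=\{\theta=0\}\subset\calX_g$, and then cut out $T_2^{(g)}$ inside $\T_g$ by the vanishing of the gradient. The key input, taken as given from the discussion preceding the statement and from \cite{grsmodd1}, is that the gradient $\nabla_z\theta$ is a section of $\EE\otimes\T_g$. Concretely, since the relative cotangent bundle satisfies $\Omega_{\calX_g/\calA_g}\cong\pi^*\EE$, the relative differential $d_z\theta=\sum_k(\partial\theta/\partial z_k)\,dz_k$ is a priori a section of $\Omega_{\calX_g/\calA_g}\otimes\T_g$ only up to a correction term proportional to $\theta$ (this is visible by differentiating the transformation law (\ref{thetatrans1}) in $z$); on the locus $\{\theta=0\}$ that correction vanishes, so $d_z\theta$ descends to an honest section of $(\EE\otimes\T_g)|_{\T_g}$.

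Next I would argue that the vanishing occurs in the expected codimension, so that no excess-intersection correction is needed. By the theorem of Debarre \cite{debarredecomposes} quoted above, $T_2^{(g)}$ is purely of codimension $g+1$ in $\calX_g$, hence of codimension exactly $g$ inside the divisor $\T_g$. This is precisely the expected codimension for the vanishing of a section of a rank-$g$ bundle, and the standard degeneracy-locus formula then gives
$$
[T_2^{(g)}]=c_g\bigl((\EE\otimes\T_g)|_{\T_g}\bigr)\cap[\T_g]=c_g(\EE\otimes\T_g)\cap[\T_g].
$$
To expand $c_g(\EE\otimes\T_g)$ I would apply the splitting principle: writing $a_1,\dots,a_g$ for the Chern roots of $\EE$, the bundle $\EE\otimes\T_g$ has Chern roots $a_i+[\T_g]$, so
$$
c_g(\EE\otimes\T_g)=\prod_{i=1}^g\bigl(a_i+[\T_g]\bigr)=\sum_{i=0}^g\lambda_i\,[\T_g]^{\,g-i},
$$
since the coefficient of $[\T_g]^{g-i}$ is the $i$-th elementary symmetric function of the roots, namely $\lambda_i=c_i(\EE)$. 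Capping with $[\T_g]$ raises each exponent by one and yields $\sum_{i=0}^g\lambda_i[\T_g]^{g-i+1}$, as claimed.

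For the pushforward to $\calA_g$ I would combine the projection formula with the pushforward formulas for powers of $[\T_g]$ recorded just above the statement. Since each $\lambda_i$ is pulled back from the base, $\pi_*[T_2^{(g)}]=\sum_{i=0}^g\lambda_i\,\pi_*([\T_g]^{g-i+1})$. Of the exponents $g-i+1$, only $i=0$ and $i=1$ survive: for $i\ge 2$ the exponent is at most $g-1$, so the pushforward vanishes. The $i=1$ term gives $\lambda_1\cdot g!\,[1]=g!\,\lambda_1$, while the $i=0$ term gives $\pi_*([\T_g]^{g+1})=\binom{g+1}{g}2^{-1}g!\,\lambda_1=\frac{(g+1)!}{2}\lambda_1$, and summing these recovers $[N_0]=\left(\frac{(g+1)!}{2}+g!\right)\lambda_1$. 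The one point genuinely requiring care — and hence the main obstacle — is justifying that the gradient is a bona fide section of $\EE\otimes\T_g$ whose vanishing scheme is exactly $T_2^{(g)}$ in the expected codimension; once Debarre's codimension statement is granted, everything else is a formal Chern-class manipulation.
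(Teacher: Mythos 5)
Your proposal is correct and follows essentially the same route as the paper: realize $T_2^{(g)}$ as the zero locus of the gradient section of $\EE\otimes\T_g$ over the theta divisor, note via Debarre's theorem that the codimension is the expected one, expand $c_g(\EE\otimes\T_g)$ by the splitting principle, and push forward using the stated formulas for $\pi_*([\T_g]^k)$. If anything, you are slightly more careful than the paper in observing that the gradient is an honest section of $\EE\otimes\T_g$ only after restriction to $\{\theta=0\}$, which is a worthwhile clarification.
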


For the locus $T_3^{(g)}$, the situation seems much more complicated, as the codimension is not known, and in particular it is not known to be equidimensional or a locally complete intersection.
However, the situation is simpler for $T_3^{(g)}[2]$ --- it is given locally in $\calX_g$ by $2g$ equations (that the point $z$ is odd $2$-torsion, and that the corresponding gradient of the theta function vanishes). If we consider its projection $I^{(g)}\subset\calA_g(2)$, it is locally given by the $g$ equations, that the gradient of the theta function vanishes when evaluated at the corresponding $2$-torsion point. For future use, we denote
\begin{equation}\label{fmdef}
 f_m(\tau):={\rm grad}_z\theta(\tau,z)|_{z=m}={\rm grad}_z\theta(\tau,z+\tau\ep+\de)_{z=0}
\end{equation}
$$
= \ee(-\ep^t\tau\ep/2-\ep^t\de-\ep^tz){\rm grad}_z\tc\ep\de(\tau,z)|_{z=0}
$$
where $m=\tau\ep+\de2$, for $\ep,\de\in(\frac12\ZZ/\ZZ)^g$, is an odd $2$-torsion point. As discussed above, the gradient of the theta function is a section of $\EE\otimes\Theta_g$; the gradient of the theta function evaluated at a $2$-torsion point can be thus considered as a restriction of this vector bundle to the zero section of $\calX_g\to\calA_g$. We thus have
$ f_m\in H^0(\calA_g(4,8),\EE\otimes\det\EE^{1/2})$. Recall that theta constants are only modular forms for the group $\Gamma_g(4,8)$; however, the action of $\Gamma_g(2)/\Gamma_g(4,8)$ preserves the characteristic and only changes signs; thus the zero locus $\lbrace f_m=0\rbrace$ is well-defined on $\calA_g(2)$.

Therefore, if Conjecture \ref{conj:dimIg} holds, the locus $I^{(g)}$ is of codimension $g$ in $\calA_g$, and locally a complete intersection, given by the vanishing of a gradient $f_m$ for some $m$. Summing over all such $m$ yields the following
\begin{thm}[{\cite[Theorem 1.1]{grhu1}}]\label{prop:Ig}
If Conjecture \ref{conj:dimIg} holds in ge\-nus $g$, then the class of the locus $I^{(g)}$ is equal to
$$
  [I^{(g)}]=2^{g-1}(2^g-1)\sum_{i=0}^{g}\lambda_{g-i}\left(\frac{\lambda_1}{2}\right)^i.
$$
\end{thm}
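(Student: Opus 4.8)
The plan is to realize $I^{(g)}$ as the zero locus of a section of a rank-$g$ vector bundle and then read off its class as the top Chern class of that bundle, the point being that the dimension statement of Conjecture \ref{conj:dimIg} guarantees the zero locus has the expected codimension, so no excess-intersection correction is needed. I would carry out the computation on a level cover, where the $2$-torsion points become honest sections and the gradients $f_m$ become genuine sections of a bundle, and only at the end descend to $\calA_g$ by a Galois averaging argument.

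First I would fix an odd $2$-torsion characteristic $m=(\ep,\de)$ and recall from \eqref{fmdef} and the surrounding discussion that the gradient $f_m={\rm grad}_z\theta(\tau,z)|_{z=m}$ is a section of $\EE\otimes\det\EE^{1/2}$ over $\calA_g(4,8)$, and that its zero locus already descends to $\calA_g(2)$ since $\Gamma_g(2)/\Gamma_g(4,8)$ acts only by signs. As $\EE\otimes\det\EE^{1/2}$ has rank $g$, and Conjecture \ref{conj:dimIg} asserts that the locus cut out by $f_m$ is purely of codimension $g$, the zero scheme $\{f_m=0\}$ has codimension equal to the rank. By the standard formula for the class of a regular section of a vector bundle, its class is then the top Chern class,
$$
 [\{f_m=0\}]=c_g(\EE\otimes\det\EE^{1/2}).
$$

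Next I would evaluate this Chern class by the splitting principle. Writing $x_1,\dots,x_g$ for the Chern roots of $\EE$, so that $\lambda_i=c_i(\EE)=e_i(x_1,\dots,x_g)$, and using $c_1(\det\EE^{1/2})=\tfrac12\lambda_1$, the Chern roots of $\EE\otimes\det\EE^{1/2}$ are $x_j+\tfrac{\lambda_1}{2}$, so that
$$
 c_g(\EE\otimes\det\EE^{1/2})=\prod_{j=1}^g\left(x_j+\tfrac{\lambda_1}{2}\right)=\sum_{i=0}^g\lambda_{g-i}\left(\tfrac{\lambda_1}{2}\right)^i.
$$
It then remains to assemble the global class. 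Over $\calA_g(2)$ the preimage $p^{-1}(I^{(g)})$ of $I^{(g)}$ under the Galois cover $p:\calA_g(2)\to\calA_g$ is the union of the loci $\{f_m=0\}$ over all odd characteristics $m$, of which there are exactly $2^{g-1}(2^g-1)$, and the Galois group $\PSp(g,\ZZ/2\ZZ)$ permutes them transitively. Since $p$ is \'etale over the generic point of $I^{(g)}$ and the Hodge classes $\lambda_i$ are pulled back from $\calA_g$, summing the contributions gives $p^*[I^{(g)}]=2^{g-1}(2^g-1)\,p^*c_g(\EE\otimes\det\EE^{1/2})$; injectivity of $p^*$ on rational Chow (from $p_*p^*=\deg p$) then yields the claimed formula on $\calA_g$.

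The main obstacle I expect is not the Chern-class bookkeeping but the justification that the zero scheme carries the correct reduced, multiplicity-one structure, i.e.\ that the step from ``$I^{(g)}$ has codimension $g$'' to ``$[\{f_m=0\}]=c_g$'' hides no hidden multiplicity. Concretely one must check that $f_m$ does not vanish to higher order along $I^{(g)}$ and that the distinct $\{f_m=0\}$ meet only in codimension strictly greater than $g$, so that $p^{-1}(I^{(g)})$ is generically the transverse disjoint sum of its $2^{g-1}(2^g-1)$ pieces. This generic transversality and reducedness is precisely where the dimension input of Conjecture \ref{conj:dimIg} is doing the real work, and where the argument is most delicate.
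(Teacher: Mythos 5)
Your proposal follows essentially the same route as the paper: realize each component of the preimage of $I^{(g)}$ on a level cover as the zero locus of the section $f_m$ of the rank-$g$ bundle $\EE\otimes\det\EE^{1/2}$, use the codimension statement of Conjecture \ref{conj:dimIg} to identify its class with $c_g(\EE\otimes\det\EE^{1/2})=\sum_{i=0}^g\lambda_{g-i}(\lambda_1/2)^i$, and sum over the $2^{g-1}(2^g-1)$ odd characteristics before descending to $\calA_g$. Your closing remarks correctly identify the genuinely delicate points (reducedness of the zero scheme and possible overlaps among the loci $\{f_m=0\}$), which is exactly where the hypothesis of the conjecture is doing the work in the cited proof.
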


We notice that the locus of Jacobians $\calJ_g$ is very special in $\calA_g$ from the point of view of the
geometry of the theta divisor. Indeed, the theta divisor of Jacobians has a
singular locus of dimension at least $g-4$, and also may have points of high multiplicity.
Thus, as is to be expected, the
loci $T_a^{(g)}$ do not intersect them transversely.

In particular, only looking at $2$-torsion points,
and at the loci $T_a^{(g)}[2]$, is equivalent to looking at curves with a theta-characteristic (considered
as a line bundle on the curve that is a square root of the canonical line bundle) with a large number of sections.
The algebraic study of theta characteristics on algebraic curves is largely due to Mumford \cite{mumthetachar} and Harris \cite{hathetachar}. It is natural to look at the loci $\calM_g^k$ of curves of genus $g$ having
a theta characteristic with at least $k+1$ sections and the same parity as $k+1$. The connection with what we
have just discussed is the (set-theoretic) equality
\begin{equation}\label{equ:restr}
\calM_g^k=I_{k+1}^{(g)} \cap \calM_g
\end{equation}
where we define $I_{k}^{(g)}\subset\calA_g$ to be the locus of ppav whose theta divisor has a point of multiplicity $k$ at a $2$-torsion point
(whose parity is even or odd depending on the parity of $k$); in particular $I_3^{(g)}=I^{(g)}$ in our notation, and
thus $\calM_g^2$ is its intersection with $\calM_g$.
The above equation is a consequence of the Riemann singularity theorem.
The following problem was
raised by Harris
\begin{qu}[Harris]
Determine the dimension of the loci $\calM_g^k$.
\end{qu}
It is known that $\calM_g^k$ is non-empty if and only if $k\le (g-1)/2$; Harris \cite{hathetachar} proved that then the codimension (of any component) of $\calM_g^k$ in $\calM_g$ is at most $(k+1)/2$,
Teixidor i Bigas \cite{teixidorhalf} proved an upper bound of $3g-2k+2$ for the dimension of all components of $\calM_g^k$, and thus showed in particular that $\calM_g^2=I^{(g)} \cap \calM_g$  in $\calM_g$ is of pure codimension $3$.
This also shows that this intersection is
highly non-transverse. (Teixidor i Bigas also showed that $\calM_{2k+1}^k$ has precisely expected dimension, and that for $g\ne 2k+1$ and $k\ge 3$ a better bound on the codimension can be obtained.)

\begin{qu}
Compute the class of $\calM_g^k$, i.e.~of the preimage of $I_{k+1}^{(g)}$ in $\calM_g$, as well as that of
its closure in $\overline{\calM_g}$.
This question is non-trivial already for $k=2$ and $g\ge 5$, for example the class of the codimension 3 locus $\calM_5^2$ is of clear interest.
\end{qu}

In a recent work of Farkas, Salvati Manni, Verra, and the first author  \cite{fgsmv}, class computations and geometric descriptions were also given for the loci of ppav within $N_0$ whose theta divisor has a singularity that is not an ordinary double point.

\smallskip
As we have seen, one of the main problems one encounters is to prove that certain cycles have the
expected codimension. One approach to this, which has been used successfully in several situations,
is to go to the boundary. Instead of $g$-dimensional ppavs one can then work with
degenerations. Salvati Manni and the first author investigated  in \cite{grsmconjectures} the boundary of the locus $I^{(g)}$ in the partial compactification $\calA_g'$, in particular proving that its intersection with the boundary is codimension $g$ within $\partial\calA_g'$ (which is further evidence for
Conjecture \ref{conj:dimIg} ). Going further into the boundary of a suitable toroidal compactification, the degenerations can be quite complicated, they are not normal and not necessarily irreducible.
However, the normalization
of such a substratum has the structure of a fibration, with fibers being toric varieties, over abelian varieties of smaller dimension --- and as such
is often amenable to concrete calculations. Ciliberto and van der Geer \cite{civdg2} described explicitly the structure of the polarization divisors and their singularities for the locus of semiabelic varieties of torus rank 2, thus proving the $k=1$ case of their Conjecture \ref{conj:Nk}. This is closely related to taking limits of theta functions, resp.~to working with the Fourier-Jacobi expansion of these functions. In our work \cite{grhu2} we have described completely the geometric structure of all strata of semiabelic varieties that have codimension at most 5 in $\Perf$, which in principle gives a method to study any locus in $\calA_g$, of codimension at most 5, by degeneration.
For this reason we shall now discuss degeneration
techniques and results.

\section{Degeneration: technique and results}
\label{sec:degen}
In the previous section of the text we discussed the problem of computing the homology (or Chow) classes of various geometrically defined loci in $\calA_g$. While computing a divisor class basically amounts to computing one coefficient, that of the generator $\lambda_1$ of $\Pic(\calA_g)$, for higher codimension loci the problem is much harder, and to the best of our knowledge Proposition \ref{prop:Ig} is the only complete computation of a homology (or Chow) class of a higher-codimension geometric locus for $g\ge 4$. In particular, Proposition \ref{prop:Ig} shows that the class of the locus $I^{(g)}$
(if it is of expected codimension, i.e.~if Conjecture \ref{conj:dimIg} holds) is tautological. In general this is not clear for the classes of geometric loci in $\calA_g$, but one can consider the problem of computing the projection of such a class to the tautological ring. Faber \cite{faberalgorithms} in particular computed the projection of the class of the locus $\calJ_g$ of Jacobians to the tautological ring, for small genus.

In general a much harder problem still is to consider the classes of closures of various loci in suitable toroidal compactifications $\calA_g^{\op{tor}}$. Denoting $\delta\in H^2(\calA_g^{\op{tor}})$ the class of the closure of the boundary of the partial compactification, we note that since $\delta$ is certainly non-tautological (as it is not proportional to $\lambda_1$), it is natural to expect that classes of closures of various loci would not be tautological. The following loosely-phrased problem is thus very natural:
\begin{qu}\label{qu:exttaut}
Define a suitable extended tautological ring, of either the Chow or cohomology groups, of some toroidal compactification $\calA_g^{\op{tor}}$, containing the tautological ring, $\delta$, and the classes of various geometrically defined loci (for example of various boundary substrata, see below).
\end{qu}
While we cannot answer the question above, in \cite{grhu1}, \cite{grhu2} we studied the closure of $I^{(g)}$ in $\Perf$ for $g\le 5$, and also described its projection to the tautological ring. Our result is
\begin{thm}[\cite{grhu1}]\label{thm:Igbar}\label{theo:class}
For $g\le 5$, we have the following expression for the class of the closure $\overline{I^{(g)}}$ in $CH^g(\Perf)$:
\begin{equation}\label{Gclass}
[\overline{I^{(g)}}]=\frac{1}{N}\sum\limits_{m\in(\frac12\ZZ/\ZZ)^{2g}_{\rm odd}}\sum\limits_{i=0}^g {p}_* \left( \lambda_{g-i}
\left( \frac{\lambda_1}{2} -\frac14 \sum\limits_{n\in Z_m} \de_n\right)^i \right)
\end{equation}
where $p :\Perf(2)\to\Perf$ is the level cover, $N=|\Sp(g,\ZZ / 2\ZZ)|$ and
$Z_m$ is the set of pairs of non-zero vectors $\pm n \in (\frac12\ZZ/\ZZ)^{2g}$ such that $m+n$ is an even 2-torsion point. We recall that the irreducible components $D_n$ of the boundary of $\Perf(2)$ correspond to non-zero elements of $(\ZZ/2\ZZ)^{2g}\equiv(\frac12\ZZ/\ZZ)^{2g}$, and denote their classes $\de_n:=[D_n]$.
\end{thm}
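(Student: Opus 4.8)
The plan is to carry out the computation on the level cover $\Perf(2)$, where the odd $2$-torsion points are honest sections and the gradient $f_m$ of the theta function at such a point $m$ is a genuine section of a vector bundle, and then to descend to $\Perf$ by pushing forward under $p$ and averaging over the Galois action. Over the open part $\calA_g(2)$ we already know, exactly as in Proposition \ref{prop:Ig}, that $f_m$ is a section of the rank $g$ bundle $\EE\otimes\det\EE^{1/2}$, whose zero locus is precisely $\{f_m=0\}$. Since Conjecture \ref{conj:dimIg} is known to hold for $g\le 5$, this zero locus has the expected codimension $g$, so it is a local complete intersection and its class is the top Chern class $c_g(\EE\otimes\det\EE^{1/2})=\sum_{i=0}^g\lambda_{g-i}(\lambda_1/2)^i$ by the usual Chern-root computation (shifting each root $x_i$ of $\EE$ by $\lambda_1/2$). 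This reproduces the open formula, so the entire content of the theorem is the boundary correction.

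The heart of the argument is to determine the line bundle $L_m$ on $\Perf(2)$ for which the extended gradient $f_m$ is a section of $\EE\otimes L_m$ cutting out $\overline{\{f_m=0\}}$ scheme-theoretically in codimension $g$. Over the open part $c_1(L_m)=\lambda_1/2$, and the claim is that on the compactification $c_1(L_m)=\frac{\lambda_1}{2}-\frac14\sum_{n\in Z_m}\de_n$. To establish this I would analyze the Fourier--Jacobi expansion of $\tc\ep\de$ near each boundary divisor $D_n$ --- equivalently, study the limit of the theta gradient on the rank-one semiabelic degenerations --- and read off the order of vanishing of $f_m$ along $D_n$. The combinatorial input is that this vanishing is governed by whether $m+n$ is an even $2$-torsion point, which is exactly the condition defining the set $Z_m$; the coefficient $1/4$ reflects the half-integral weight of theta constants together with the vanishing order of the relevant lower-genus theta-null that appears in the leading term.

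Granting the identification of $L_m$, the class of the extended zero locus on $\Perf(2)$ is $c_g(\EE\otimes L_m)$, which expands to $\sum_{i=0}^g\lambda_{g-i}\bigl(\frac{\lambda_1}{2}-\frac14\sum_{n\in Z_m}\de_n\bigr)^i$ by the same root-shift computation as in the open case, now with the shift $\lambda_1/2$ replaced by $c_1(L_m)$. Summing over the $2^{g-1}(2^g-1)$ odd characteristics $m$, pushing forward under $p$, and dividing by $N=|\Sp(g,\ZZ/2\ZZ)|$ to average over the Galois group then yields precisely formula \eqref{Gclass} for $[\overline{I^{(g)}}]$ in $CH^g(\Perf)$.

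The main obstacle is the boundary analysis. One must verify not only the vanishing orders producing the coefficient $-\frac14$ and the set $Z_m$, but also that the closure $\overline{I^{(g)}}$ genuinely coincides with the zero locus of the extended section, i.e.~that $f_m$ acquires no spurious components of codimension $g$ supported on the boundary and that the multiplicities along $\overline{I^{(g)}}$ are correct. This is exactly where the hypothesis $g\le 5$ enters: it permits one to invoke the explicit description of all semiabelic strata of codimension at most $5$ in $\Perf$ obtained in \cite{grhu2}, and to check stratum by stratum that the extended gradient cuts out precisely $\overline{I^{(g)}}$ with the expected multiplicity and no excess contribution from the boundary.
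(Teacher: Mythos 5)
Your proposal follows essentially the same route as the paper's proof: extend $f_m$ over the boundary via the Fourier--Jacobi expansion to identify the twist $-\frac14\sum_{n\in Z_m}\de_n$ (coming from the vanishing order $1/4$ in the normal coordinate $q$), compute the class as the top Chern class of the twisted bundle via the root-shift expansion, average over the Galois group, and invoke the stratum-by-stratum analysis of \cite{grhu2} for $g\le 5$ to rule out excess components of $\lbrace\overline{f_m}=0\rbrace$ supported in the boundary. The only step you leave implicit is that the Fourier--Jacobi analysis controls $\overline{f_m}$ only on the partial compactification $\calA_g'$, so one must use $\codim_{\Perf}(\Perf\setminus\calA_g')=2$ together with Hartogs' theorem to extend the section to all of $\Perf(4,8)$ --- this is precisely why the argument is carried out on $\Perf$ rather than $\Vor$.
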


When trying to define a suitable extended tautological ring on a toroidal compactification $\calA_g^{\op{tor}}$,
one encounters several problems. The first is that $\calA_g^{\op{tor}}$ will in general be singular, also
as a stack, as is the case
with the Voronoi compactification $\Vor$ for $g\geq 5$ and the perfect cone compactification $\Perf$
for $g\geq 4$, and thus we cannot expect to have a ring structure on Chow classes.

This difficulty could be overcome by taking a suitable refinement to obtain a basic fan, which then leads to
a stack-smooth toroidal
compactification. The drawback is that no natural examples of basic fans exist for $g \geq 5$. Moreover,
such a refinement would introduce numerous new boundary substrata whose geometric meaning is unclear.
One possible solution, proposed by Ekedahl and van der Geer \cite{ekvdgcycles}, is to consider a tautological
module, the pushforward of such a ring to the Satake compactification.
While natural, this tautological module could not capture all the information of the toroidal compactification,
and possible degenerations, which may be more subtle. Independently, it would be of great interest to understand
the topology of the Satake compactification itself.

Another approach is to try and define a tautological subring of the operational Chow ring, i.e. to define a suitable
collection of geometrically meaningful vector bundles on $\calA_g^{\op{tor}}$ and then take the subring
of the operational Chow ring which is generated by these classes. Since by \cite{mumhirz}, \cite{fachbook}
the Hodge
bundle ${\mathbb E}$
extends to every toroidal compactifications we would naturally always obtain the classes $\lambda_i=c_i({\mathbb E})$.
On $\Perf$ the boundary $\delta$ is an irreducible Cartier divisor, whose first Chern class one would naturally
include in an extended tautological ring. Evaluating the elements of the extended tautological ring on the
fundamental cycle one
would obtain Chow homology classes which could be considered as tautological classes.

A natural class of candidates for such tautological Chow classes arises as follows: let $\calA_g^{\op{tor}}(\ell)$
be the level $\ell$ cover of $\calA_g^{\op{tor}}$. Then the boundary is no longer irreducible (not even for the
case of $\Perf$), say $\delta= \sum \delta_n$. It should, however, be noted that although $\delta$ is a
Cartier divisor this is in general no longer true for the $\delta_n$. Only certain sums of these, e.g.~the one appearing in (\ref{fmextension}) are guaranteed to be Cartier. Nevertheless, we can obtain interesting
geometric loci from these. The first example is the sum $\sum_{n\neq m} \delta_n\delta_m$. In the case of
the perfect cone compactification $\Perf$ this is exactly the class of the complement of $\Perf\setminus\calA_g'$ of the partial compactification.
One can now go on and study the possible combinatorics of intersections of boundary divisors $\de_i$, and we investigated this in \cite{grhu1}.

Indeed, one would like the extended tautological ring to contain all
polynomials in the $\de_n$ that are invariant under the action of $\Sp(g,\ZZ/2\ZZ)$.
The action of the symplectic group on tuples of theta characteristics was fully described by Salvati Manni in \cite{smlevel2}: two tuples lie in the same orbit if and only if they can be renumbered $n_1,\ldots,n_k$ and $m_1,\ldots,m_k$ in such a way that: the parity of $n_i$ is the same as the parity of $m_i$; there exists a linear relation with an even number of terms $n_{i_1}+\ldots+n_{i_{2l}}=0$ if and only if $m_{i_1}+\ldots+m_{i_{2l}}=0$.

It follows that the ring of polynomials in $\de_n$ invariant under the action of the symplectic group is generated by expressions of the form
$$
 \sum\limits_{|I|\subset (\frac12\ZZ/\ZZ)^{2g}; |I|=i, f_1(I)=\ldots=f_{j_i}(I)=0}\quad\prod_{m\in I}\de_m^a
$$
where each $f_j(I)$ is a sum of an even number of characteristics in $I$ (i.e.~a linear relation), and moreover the parities of all the characteristics are prescribed a priori.

We often want to prove results about some toroidal compactification $\calA_g^{\op{tor}}$.
If we want to use degeneration techniques, then we want to be able to compare this to $\Vor$, over which we have
a universal family.
The compactification $\calA_g^{\op{tor}}$ has
various boundary strata, each corresponding to a suitable cone in the fan, and if such a cone is
shared with $\Vor$, we know that the corresponding stratum parameterizes semiabelic varieties of a certain
toric type. This is of particular interest in the case of $\Perf$. For $g \leq 3$ we know that
$\Vor$ and $\Perf$ coincide, while for $g=4,5$ $\Vor$ is a blow-up of $\Perf$. Moreover,
by \cite{albr} we know that the two compactifications coincide in an open neighborhood of the
locus of Jacobians, and by \cite{mevi} their intersection is the matroidal locus, but in general they are different.

Using the relationship between the two compactifications we showed, by explicit computations,
that for $g\le 4$ the class $[\overline{I^{(g)}}]$ indeed lies in the ring generated by the $\lambda_i$ and the classes of the boundary strata. However, for $[\overline{I^{(5)}}]$, an explicit formula for which is given by (\ref{Gclass}), we could not prove such a result.
Studying a subring of the cohomology (or Chow) of $\Perf$ including the $\lambda_i$ and the classes of the boundary strata is thus of obvious interest. In particular for $g=5$ it could either turn out that the class of $[\overline{I^{(5)}}]$ lies in this subring --- showing this would perhaps involve somehow relating the boundary classes and the Hodge classes --- or this could be another class that may need to be added to a suitable extended tautological ring.

\smallskip
We would now like to comment on the proof of Theorem \ref{thm:Igbar} by degeneration methods.
One could imagine that this is a straightforward extension of Theorem \ref{prop:Ig}.
Indeed, the components of the preimage of the locus $I^{(g)}$ on the level cover $\calA_g(2)$ are given
by the vanishing of various gradients of the theta function at $2$-torsion points, i.e.~by equations $f_m=0$.
Thus one could try to investigate the behavior of each $f_m$ at the boundary of $\Perf$, and determine which bundle
it is a section of. However, the geometry of the situation is very subtle. Indeed, on $\calA_g(2)$ the locus
$I^{(g)}$ can also be defined by the vanishing of the gradients of odd theta functions with characteristics at zero,
i.e.~by equations
\begin{equation}\label{equ:F}
F_m:={\rm grad}_z\theta_m(\tau,z)|_{z=0}=0,
\end{equation}
and it is not a priori clear which of these
equations should be used at the boundary.

To deal with this, one first needs to show that $f_m$ (and $F_m$) extend to sections of some vector bundle on the partial compactification of $\calA_g$ --- this is reasonably well-known, and can be done by a direct calculation of the Fourier-Jacobi expansion of the theta function (that is, of the Taylor series in the $q$-coordinate). It turns out that $F_m$ defined by (\ref{equ:F}) vanishes identically on a boundary component $\delta_n$ of $\Perf(2)$ if and only if $n\in Z_m$ (recall that $Z_m$ was defined in Theorem \ref{theo:class}).

Moreover, in this case the generic vanishing order of $F_m$ on $\de_n$ is $1/4$ in the normal coordinate $q:=\exp (2\pi i \tau_{11})$ (where the boundary component is locally given as $\tau_{11}=i\infty$ in the Siegel space).
Comparing $f_m$ and $F_m$ by (\ref{fmdef}) one concludes that the $f_m$  extend to sections
\begin{equation}\label{fmextension}
 \overline{f_m}\in H^0\left(\calA_g'(4,8),\EE\otimes\det\EE^{1/2}\otimes(-\sum_{n\in Z_m}D_n/4)\right)
\end{equation}
on the partial compactification not vanishing on the generic point of any boundary component.
We now use the fact that the codimension of $\Perf\setminus\calA_g'$ in $\Perf$ is equal to two (this is not the case for $\Vor$), and thus by Hartogs' theorem $\overline{f_m}$ extend to a section of the above vector bundle on all of $\Perf(4,8)$.

Since on $\calA_g(4,8)$ the components of the preimage of $I^{(g)}$ are given by vanishing of the $f_m$, it follows that on $\Perf(4,8)$ the closure $\overline{I^{(g)}}$ of $I^{(g)}$ is contained in the zero locus of the $\overline{f_m}$. A priori it could happen that the locus $\{\overline{f_m}=0\}$ on $\Perf(4,8)$ has other irreducible components, but we conjecture that is not so:
\begin{conj}
The locus $\lbrace\overline{f_m}=0\rbrace\subset\Perf(4,8)$ has no irreducible components contained in the boundary.
\end{conj}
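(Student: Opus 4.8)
The plan is to prove the stronger statement that for every boundary divisor $D_n$ of $\Perf(4,8)$ the restricted section $\overline{f_m}|_{D_n}$ vanishes only in codimension $\geq g$ inside $D_n$, that is, in codimension $\geq g+1$ in $\Perf(4,8)$. This suffices: the full zero locus $\{\overline{f_m}=0\}$ is cut out by the $g$ components of a section of the rank $g$ bundle $\EE\otimes\det\EE^{1/2}\otimes\calO(-\sum_{n\in Z_m}D_n/4)$, so each of its irreducible components has codimension at most $g$ in $\Perf(4,8)$. Any component $V$ contained in the boundary, being irreducible and contained in the finite union $\bigcup_n D_n$ of closed sets, is contained in a single $D_n$, hence in $\{\overline{f_m}|_{D_n}=0\}$; this would force $\codim V\geq g+1$, a contradiction. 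Thus it is enough to control each torus rank one boundary divisor separately, and the deeper strata (which by $\codim(\Perf\setminus\calA_g')=2$ sit in the intersections of the $D_n$) need no separate treatment.

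First I would compute $\overline{f_m}|_{D_n}$ explicitly from the Fourier--Jacobi expansion already used to produce the extension (\ref{fmextension}). Writing a period matrix near $D_n$ in block form with degenerating entry $\tau_{11}$ and $q=\ee(\tau_{11})$, and normalizing away the order $1/4$ vanishing, the $g$ gradient components split into two groups. The $g-1$ derivatives in the directions of the genus $g-1$ block reproduce, to leading order in $q$, the genus $g-1$ gradient $f_{m'}$ at the induced odd $2$-torsion characteristic $m'$, multiplied by a nowhere vanishing Jacobi factor; the single derivative in the degenerating direction contributes one further equation $h_n=0$, expressible through genus $g-1$ theta data in the Jacobi variable. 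Recalling that the normalization of $D_n$ is a fibration over $\calA_{g-1}$ with fibre the corresponding semiabelian data, this identifies $\{\overline{f_m}|_{D_n}=0\}$, set-theoretically, with the intersection of the pullback of the genus $g-1$ locus $I^{(g-1)}=\{f_{m'}=0\}$ with the divisor $\{h_n=0\}$.

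The codimension count then proceeds by induction on $g$, the base cases $g\leq 5$ being covered by the explicit descriptions of $I^{(g)}$ recalled above. Assuming Conjecture \ref{conj:dimIg} in genus $g-1$, the pullback of $I^{(g-1)}$ has codimension $g-1$ in $D_n$; it remains to show that the extra equation $h_n=0$ genuinely drops the codimension by one, i.e.\ that $h_n$ does not vanish identically on any component of this pullback. This independence statement is the main obstacle. It is exactly the assertion that the vertical (degenerating) gradient component is not a formal consequence of the horizontal ones along $I^{(g-1)}$, and proving it seems to require either an explicit leading Fourier--Jacobi coefficient for $h_n$ in terms of genus $g-1$ theta constants and Jacobi forms, together with an argument that the resulting divisor cannot contain $I^{(g-1)}$, or, alternatively, a direct smoothing argument producing, for a generic point of a putative purely boundary component, a one-parameter family of interior ppav in $I^{(g)}$ degenerating to it --- which would place the component in the closure of the interior and hence exclude it. Establishing this independence uniformly in $n$ and in the characteristic $m$, intertwined with the inductive dimension hypothesis on $I^{(g-1)}$, is where the real work lies.
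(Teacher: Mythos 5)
The statement you are trying to prove is stated in the paper as a conjecture: the authors explicitly write that they ``do not know a general approach to this conjecture'' and prove it only for $g\le 5$ (Theorem \ref{theo:conjectureg5}), by a case-by-case analysis of \emph{all} boundary strata of $\Perf$ of codimension at most $5$, describing the semiabelic theta divisors on each and computing $\overline{f_m}$ there explicitly (with a computer verification needed for one stratum, the cone $\sigma_1$ of the Remark, which does not even lie in the second Voronoi fan). Your proposal is a general inductive strategy rather than a proof, and it has one unacknowledged gap in addition to the one you concede.

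The unacknowledged gap is in the reduction to the divisors $D_n$. Your Fourier--Jacobi analysis identifies $\{\overline{f_m}|_{D_n}=0\}$ with the pullback of $I^{(g-1)}$ cut by one further equation $h_n=0$ only over the open torus-rank-one part of $D_n$, i.e.\ over $\partial\calA_g'$. An irreducible component of $\{\overline{f_m}|_{D_n}=0\}$ (and hence of $\{\overline{f_m}=0\}$) could be entirely contained in the deeper strata of $D_n$, of torus rank $\ge 2$, where that description is simply not available; so your claim that ``the deeper strata need no separate treatment'' does not follow. The fact that $\codim(\Perf\setminus\calA_g')=2$ is used in the paper only to extend the section $\overline{f_m}$ across the deep boundary by Hartogs; it says nothing about the zero locus of the extension there, and controlling that zero locus on the deep strata is precisely the content of \cite{grhu2} --- the rank-one analysis you describe essentially reproduces what was already established in \cite{grsmconjectures} for the partial compactification. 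Beyond this, the step you yourself flag as ``where the real work lies'' (that $h_n$ does not vanish identically on any component of the pullback of $I^{(g-1)}$) is indeed unproven, and your induction is conditional on Conjecture \ref{conj:dimIg} in genus $g-1$, which is itself open for $g-1\ge 6$; for $g\ge 6$ there is the further obstruction that $\Vor$ no longer maps to $\Perf$, so no universal semiabelic family over $\Perf$ is known on which to carry out even the leading-order computations your strategy requires.
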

If this conjecture holds, it implies that we have $\overline{I^{(g)}}=p(\lbrace \overline{f_m}=0\rbrace)$ (recall that $p$ denotes the level cover). Theorem \ref{thm:Igbar} then follows by computing the class of the locus $\lbrace\overline{f_m}=0\rbrace$, as the zero locus of a section of a vector bundle.

We do not know a general approach to this conjecture, or to similar more general results about the degenerations of the loci defined by vanishing of the gradients of the theta function. The evidence that we have comes from a detailed investigation of the cases of small torus rank. One of the main results of \cite{grhu2} is
\begin{thm}[\cite{grhu2}]  \label{theo:conjectureg5}
The conjecture above holds for $g\leq 5$.
\end{thm}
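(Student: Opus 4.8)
The plan is to prove the statement by a direct, stratum-by-stratum analysis of the boundary of $\Perf(4,8)$, using the description of semiabelic degenerations recalled in Section~\ref{sec:compactifications}. By construction $\overline{f_m}$ is a section of a rank-$g$ vector bundle, twisted by $-\frac14\sum_{n\in Z_m}D_n$ precisely so that it does not vanish at the generic point of any boundary divisor $D_n$; see \eqref{fmextension}. Consequently an irreducible component of $\{\overline{f_m}=0\}$ contained in the boundary would have to be an \emph{excess}-dimensional vanishing locus of the restricted section (a component inside some $D_n$ has codimension $g-1$ in $D_n$, one less than the codimension $g$ in which a rank-$g$ section generically vanishes, and hence one less than the codimension of $\overline{I^{(g)}}\cap D_n$). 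The problem thus reduces to showing that on every boundary stratum the restriction of $\overline{f_m}$ vanishes only in the expected codimension. I would stratify the boundary by torus rank $t$, $1\le t\le g$, recalling that a stratum is indexed by a perfect cone, that $t$ is the maximal rank of a matrix in the cone, and that the normalization of the stratum fibers in toric varieties over a $(g-t)$-fold self-product of $\calX_{g-t}\to\calA_{g-t}$. For $g\le 5$ there are only finitely many such cones, and these — together with the associated Delaunay decompositions — have been enumerated and their geometry made explicit in \cite{grhu2}.

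First I would, for each torus rank $t$, compute the leading Fourier--Jacobi coefficient of $\overline{f_m}$ along the corresponding stratum. Writing the degenerating periods in the normal coordinates $q_{ij}=\ee(\tau_{ij})$ for $1\le i,j\le t$, the Riemann theta function degenerates into a sum, over the relevant Delaunay cells, of theta functions on the abelian part $A'$ of dimension $g-t$, weighted by monomials in the torus coordinates. Differentiating in $z$ then produces contributions of two kinds: gradients of the lower-genus theta functions in the abelian directions, and the combinatorial/lattice data of the Delaunay decomposition in the toric directions. The key output is the precise leading term, including which divisors $D_n$ it is divisible by — this is exactly the bookkeeping that yields the twist appearing in \eqref{fmextension} and in \eqref{Gclass}.

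Next I would show, stratum by stratum, that this leading term does not vanish on an excess-dimensional locus, so that nothing of codimension $g$ in $\Perf$ lies inside the boundary beyond $\overline{I^{(g)}}$. In the abelian directions this reduces to the non-degeneracy of theta gradients on the smaller ppav $A'$, which is an inductive input from lower genus together with the Riemann and Prym singularity theorems (exactly the ingredients by which Conjecture~\ref{conj:dimIg} is known in the relevant range). In the toric directions one must verify that the monomial and lattice factors attached to the Delaunay cells cannot conspire with the abelian gradients to force vanishing in codimension $g-1$. Assembling these verifications over all strata of torus rank at most $5$ then shows that $\{\overline{f_m}=0\}$ has no component supported in the boundary.

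The main obstacle is squarely the higher torus rank cases, $t=3,4,5$ at $g=5$: there the number of distinct perfect cones and Delaunay tilings grows, the limit theta functions become genuinely complicated sums, and the semiabelic varieties are neither normal nor irreducible, so that the fibration structure over $\calA_{g-t}$ must be analyzed with care on each component. Controlling the leading Fourier--Jacobi term — and in particular ruling out an accidental excess vanishing produced by the interaction of the toric monomial factors with the lower-genus theta gradients — requires the detailed, case-by-case geometric description of all codimension $\le 5$ strata carried out in \cite{grhu2}. It is the finiteness and explicit tractability of this list for $g\le 5$ that makes the theorem accessible in this range, and the lack of such control in general that leaves the full conjecture open.
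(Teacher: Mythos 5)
Your overall strategy is the one the paper follows: reduce to showing that on each boundary stratum of $\Perf$ of codimension at most $5$ the restriction of $\overline{f_m}$ does not vanish in excess dimension, and carry this out case by case using the explicit semiabelic polarization divisors from \cite{grhu2}. However, there is a concrete gap in the step where you assume that every relevant stratum carries the semiabelic/Delaunay structure. That structure is available only for cones that the perfect cone decomposition shares with the second Voronoi decomposition (equivalently, for matroidal cones, by \cite{mevi}), since it is $\Vor$, not $\Perf$, that carries the family of stable semiabelic varieties. For $g=5$ this assumption fails: besides the standard cone $\langle x_1^2,\ldots,x_5^2\rangle$ there is exactly one further $\GL(5,\ZZ)$-orbit of $5$-dimensional perfect cones containing rank $5$ matrices, represented by
$$
\sigma_1=\bigl\langle x_1^2,\ldots,x_4^2,\,(2x_5-x_1-x_2-x_3-x_4)^2\bigr\rangle,
$$
whose generators do not form a basis of the space of linear forms. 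This cone is not matroidal and hence not in the second Voronoi fan, so on the corresponding stratum there is no semiabelic theta divisor to degenerate, no Delaunay combinatorics to invoke, and your Fourier--Jacobi bookkeeping in terms of Delaunay cells does not apply. (This is precisely the stratum that was overlooked in the original argument of \cite{grhu2}, and the paper devotes a remark to repairing it.)

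To close the gap one must treat $\sigma_1$ by hand: since $\sigma_1$ is basic as a sub-cone of $\Sym^2(\ZZ^5)$, the associated toric chart is $T_{\sigma_1}\cong(\CC^*)^{10}\times\CC^5$, and after writing explicit monomial coordinates $s_i,T_j$ in the $t_{kl}=\ee(\tau_{kl})$ adapted to the dual cone $\sigma_1^{\vee}$, one expands each $\overline{f_m}$ as a Taylor series in $T_1,\ldots,T_5$ and checks that none of the $496$ sections vanishes identically on $\{T_1=\cdots=T_5=0\}$; in the paper this verification is done by computer. Your argument is correct and complete on all the matroidal strata, where it coincides with the paper's; without the separate analysis of $\sigma_1$ the case $g=5$ is not established.
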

To prove this conjecture, we investigated in detail all strata in the boundary of $\Perf$ that have codimension at most 5. Using the fact that for $g\le 5$ the Voronoi compactification $\Vor$ admits a morphism to $\Perf$, it follows that all these strata parameterize suitable families of semiabelic varieties of torus rank up to 5, with determined type of the toric bundle and gluing. By examining these strata case by case, describing their semiabelic polarization (or theta) divisors, we could compute the extension $\overline{f_m}$ on each such boundary stratum explicitly. In \cite{grhu2} we then proved that $\overline{f_m}$ on such a boundary stratum of $\Perf$ may not have a vanishing locus that is of codimension at most 5 in $\Perf$, thus proving the above theorem. However, to attempt to prove the conjecture for $g\ge 6$, a different approach may be required, especially as it is no longer true that $\Vor$ admits a morphism to $\Perf$, and thus it is not known whether there exists a universal family over $\Perf$, where the computations of the degenerations of the gradients of the theta functions could be carried out. However, our techniques and results of \cite{grhu2} are still of independent use, as they provide a way to study any locus in $\calA_g$ of codimension at most 5  by taking its closure in $\Perf$ and considering degenerations.

\begin{rem}
At this point we would like to take the opportunity to point out and correct an error in the proof of  Theorem \ref{theo:conjectureg5}.
given in  \cite{grhu2}. There we stated that the only codimension $5$ stratum in $\beta_5$ is that corresponding to the standard cone
$\langle x_1^2, \ldots x_5^2\rangle$, which however turns out not to be true for $g=5$.
Indeed, there are two such cones. Apart from the standard cone, we also have to consider
$$\sigma_1=\langle x_1^2, \ldots x_4^2, (2x_5- x_1 - x_2 - x_3 - x_4)^2\rangle.$$
It follows immediately from the definition of the perfect cone
decomposition that $\sigma_1$ belongs to it.
Moreover, it does not lie in the $\GL(5,\ZZ)$ orbit of the standard cone, since its generators do not give a basis of the
space of linear forms in $5$ variables. This also implies that $\sigma_1$ is not in the matroidal locus, see \cite[Section 4]{mevi}.
The fact that, up to $\GL(5,\ZZ)$-equivalence, this is the only other cone of dimension $5$ containing rank $5$ matrices follows
from \cite[p.~7, Table 1]{EVGSperf} (note that $n$ in this table is the dimension of the cone minus $1$), which in turn was confirmed
by a computer search, which was performed by M.~Dutour Sikiric .

Although the generators of $\sigma_1$ do not form a basis of the space of linear forms, the cone $\sigma_1$
itself  is still a basic cone
since its generators are part of a basis of  $\Sym^2(\ZZ^5)$. However, since $\sigma_1$ is not contained in the matroidal locus,
it is also not contained in the second Voronoi decomposition by  \cite[Theorem A]{mevi}
 and hence we cannot argue with properties of the theta divisor on
semi-abelic varieties as we did for all the other strata in the proof given in  \cite{grhu2}. Nevertheless, it is possible to give a direct proof that the
sections $\overline{f_m}$ do not vanish identically on the stratum corresponding to $\sigma_1$.  For this we consider the toric
variety $T_{\sigma_1}$. Since $\sigma_1$ is basic of dimension $5$ it follows that $T_{\sigma_1}\cong (\CC^*)^5 \times \CC^{10}$.
Let $t_{ij}=e^{2\pi i \tau_{ij}}$. We consider the basis $U_{ij}$ of $\Sym^2(\ZZ)$ where $U_{ii}=x_i^2$ and $U_{ij}=2x_ix_j$ for
$i\neq j$. We denote the dual basis by $U_{ij}^*$.
A straightforward calculation shows that the dual cone $\sigma_1^{\vee}$ is generated by the following elements:
$$
U_{55}^*-4U_{12}^*,\ U_{25}^*+2U_{12}^*,\ U_{35}^*+2U_{12}^*,
U_{45}^*+2U_{12}^*,
$$
$$
U_{13}^*-U_{12}^*,
\ U_{14}^*-U_{12}^*,
\ U_{23}^*-U_{12}^*,\ U_{24}^*-U_{12}^*,\ U_{34}^*-U_{12}^*,
\ U_{15}^*-U_{25}^*,
$$
$$
U_{11}^*-U_{12}^*,\ U_{22}^*-U_{12}^*,
\ U_{33}^*-U_{12}^*,\ U_{44}^*-U_{12}^*,U_{12}^*,
$$
where
the first $10$ generators are orthogonal to $\sigma_1$ and the last $5$ generators are orthogonal to $4$ of the generators and
pair with the remaining generator to $1$. Hence we obtain coordinates for $T_{\sigma_1}\cong (\CC^*)^{10}\times \CC^{5}$ by setting
$$
s_1=t_{55}t_{12}^{-4},\ s_2=t_{25}t_{12}^2,\ s_3=t_{35}t_{12}^2,\ s_4=t_{45}t_{12}^2,\ s_5=t_{13}t_{12}^{-1}
$$
$$
s_6=t_{14}t_{12}^{-1},
\ s_7=t_{ 23}t_{12}^{-1},\ s_8=t_{24}t_{12}^{-1},\ s_9=t_{34}t_{12}^{-1},\ s_{10}=t_{15}t_{25}^{-1}
$$
as coordinates on the torus $(\CC^*)^{10}$, and
$$
T_1=t_{11}t_{12}^{-1},\ T_2=t_{22}t_{12}^{-1},\ T_3=t_{33}t_{12}^{-1},\ T_4=t_{44}t_{12}^{-1},\ T_5=t_{12}
$$
as coordinates on the space  $\CC^{5}$.
From this one can express the $t_{ij}$ in terms of the coordinates $s_i,T_j$ and
this in turn enables one to compute the Taylor series expansion of the sections  $\overline{f_m}$ in the coordinates $s_i,T_j$
for each of the $16 \cdot 31=496$ odd $2$-torsion points $m$. A computer calculation shows they they never vanish identically when restricted
to $T_1 = \ldots = T_5=0$.
\end{rem}

\begin{rem}
Note that one could attempt to follow a similar approach to Conjecture \ref{conj:dimIg} for $T_3^{(g)}$, for low genus. However, in this case one needs to study suitable conditions for the singularities of
semiabelic theta divisors arising from tangencies of lower-dimensional theta divisors. For the case of torus rank up to 2 this was done by Ciliberto and van der Geer \cite{civdg2}, but it is not clear how far this can be extended into the boundary, as it requires a detailed understanding of the possible geometries of intersections of translates of the theta divisor.
\end{rem}

\section*{Acknowledgements}
The first author would like to thank Riccardo Salvati Manni, in collaboration with whom some of the results discussed above were obtained and some of the conjectures made, and from whom we have learned a lot about the geometry of theta functions and abelian varieties. The second author is grateful to Valery Alexeev, Iku Nakamura, and Martin Olsson for discussions on degenerations of abelian varieties and universal level families. We are grateful to Achil Sch\"urmann for pointing out a reference on the perfect cone decomposition and to Mathieu Dutour Sikiric for
double checking the number of cones in the perfect cone decomposition in genus $5$.

We thank the referee for reading the text very carefully and for numerous suggestions and corrections.

\newcommand{\etalchar}[1]{$^{#1}$}

\end{document}